\newtheorem{theorem}{Theorem}[section]
\newtheorem{corollary}[theorem]{Corollary}
\newtheorem{proposition}[theorem]{Proposition}
\newtheorem{notation}[theorem]{Notation}
\theoremstyle{definition}
\newtheorem{remark}[theorem]{Remark}
\newtheorem{definition}[theorem]{Definition}
\newtheorem{example}[theorem]{Example}
\newcommand{\df}[1]{\textbf{\textit{#1}}}  
\newcommand{\U}{\mathbbmss{U}}
\newcommand{\V}{\mathbbmss{V}}
\newcommand{\RR}{\mathbb{R}} 
\newcommand{\NN}{\mathbb{N}} 
\newcommand{\cF}{\mathcal{F}}
\newcommand{\cG}{\mathcal{G}}
\newcommand{\cH}{\mathcal{H}}
\newcommand{\cU}{\mathcal{U}}
\newcommand{\cS}{\mathcal{S}}
\newcommand{\cA}{\mathcal{A}}
\newcommand{\cB}{\mathcal{B}}
\newcommand{\cM}{\mathcal{M}}
\newcommand{\mF}{\mathfrak{F}}  
\newcommand{\mS}{\mathfrak{S}}  
\newcommand{\mC}{\mathfrak{C}}  
\newcommand{\und}{\underline}  
\newcommand{\M}[1]{\mu_{#1}}       
\newcommand{\I}[1]{\sigma_{#1}}       
\newcommand{\NM}[1]{\omega_{#1}}   
\newcommand{\DM}[2][u]{\mu_{#2}\left(#1\right)}       
\newcommand{\DI}[2][u]{\sigma_{#2}\left(#1\right)}       
\newcommand{\DNM}[2][u]{\omega_{#2}\left(#1\right)}   
\newcommand{\ANG}[1]{\left\langle #1 \right\rangle}  
\newcommand{\NSbase}[4][\U]{\ANG{#1 , #2, #3, #4}}  
\newcommand{\ns}[1]{\widetilde{#1}}   
\newcommand{\NS}[2][\U]{\NSbase[#1]{\M{#2}}{\I{#2}}{\NM{#2}}}   
\newcommand{\nNS}[2][\U]{\ns{#2} = \NS[#1]{#2}}  
\newcommandx{\NSEXT}[3][1=u,2=\U]{\left\{ \left( #1, \DM{#3}, \DI{#3}, \DNM{#3} \right) : \, #1 \in #2 \right\}} 
\newcommand{\nameSVNS}[1][]{SVN-set#1\xspace}  
\newcommand{\SSVNS}[1][\U]{\mathcal{SVN}(#1)}  
\newcommand{\completion}[1]{\ANG{#1}}   
\newcommand{\SVNF}[1][\U]{\mF(#1)}   
\newcommand{\NSemptyset}[1][]{\widetilde{\emptyset}_{#1}}
\newcommand{\NSabsoluteset}[1][\U]{\widetilde{#1}}
\newcommand{\NSsubseteq}{\mathrel{\ooalign{
\raise0.2ex\hbox{$\Subset$}%
\cr\hidewidth%
\raise-0.25ex\hbox{\rule[0pt]{5.6pt}{0.4pt}}%
\hidewidth\cr%
}}}
\newcommand{\NSnotsubseteq}{\not\NSsubseteq} 
\newcommand{\NSeq}{\mathrel{\ooalign{
\raise0.1ex\hbox{$=$}%
\cr%
\hskip0.6pt\raise0.2ex\hbox{\rule{6.6pt}{0.35pt}}
\cr\hidewidth%
\raise1.05ex\hbox{{\rule{6.6pt}{0.32pt}}}
\hidewidth\cr%
}}}
\newcommand{\NSneq}{\not\NSeq}   
\newcommand{\NScompl}{{\hskip.15ex\ooalign{\hbox{\scalebox{0.8}{$\complement$}}%
\hidewidth\cr\hspace{1.6pt}\hbox{\rule[0.2pt]{0.6pt}{6.6pt}}%
}}}    
\newcommand{\NScomplsmall}{{\hskip.05ex\ooalign{\hbox{\scalebox{0.6}{$\complement$}}%
\hidewidth\cr\hspace{1.2pt}\hbox{\rule[0.2pt]{0.4pt}{4.6pt}}%
}}}    
\newcommand{\NScup}{\Cup}  
\newcommand{\NScap}{\Cap}  
\newcommand{\NSCup}{
\mathop{\vphantom{\bigcup}\vcenter{\hbox{\text{%
\ooalign{$\displaystyle\bigcup$\cr%
\hidewidth%
\raisebox{.25ex}{\resizebox{.7\width}{.875\height}{$\displaystyle\bigcup$}}%
\hidewidth\cr
}}}}}}%
\newcommand{\NSCap}{
\mathop{\vphantom{\bigcap}\vcenter{\hbox{\text{%
\ooalign{$\displaystyle\bigcap$\cr%
\hidewidth%
\raisebox{-.1ex}{\resizebox{.7\width}{.875\height}{$\displaystyle\bigcap$}}%
\hidewidth\cr%
}}}}}}%
\newcommand{\inv}[1]{{#1}^{-1}\!}  
\newcommand{\fibre}[2]{{#1}^{-1}\left(\left\{#2\right\}\right)}    
\newenvironment{enumi}[1][0]%
{\begin{enumerate}[label={\rm(\arabic*)},topsep=4pt]%
\itemsep1pt
\setcounter{enumi}{#1}%
}%
{\end{enumerate}}
\newenvironment{enumr}[1][0]%
{\begin{enumerate}[label={\rm(\roman*)},topsep=4pt]%
\itemsep1pt
\setcounter{enumi}{#1}%
}%
{\end{enumerate}}
\newenvironment{nstabular}[5]%
{\begin{tabular}[t]{|c|c|c|c|}\hline
\diagbox{\quad$#1$}{\\[1mm]\quad$#2$} &
$#3$ & $#4$ & $#5$ \\ \hline}%
{\\ \hline \end{tabular}}
\begin{document}

\begin{center}
\textbf{\Large Single Valued Neutrosophic Filters}\\ \vspace{10pt}
$^1$Giorgio Nordo, $^2$Arif Mehmood, $^3$Said Broumi\\
$^1$MIFT - Department of Mathematical and Computer Science, Physical Sciences and Earth Sciences,\\
Messina University, Italy.
\\
$^2$Department of Mathematics and Statistics, Riphah International University Sector I-14, Islamabad, Pakistan
\\
$^3$Laboratory of Information Processing, Faculty of Science, University Hassan II, Casablanca, Morocco
\\ \vspace{4pt}
giorgio.nordo@unime.it$^1$, mehdaniyal@gmail.com$^2$, broumisaid78@gmail.com$^3$
\end{center}

\section*{Abstract}
In this paper we give a comprehensive presentation of the notions of filter base,
filter and ultrafilter on single valued neutrosophic set
and we investigate some of their properties and relationships.
More precisely, we discuss properties related to filter completion, the image of neutrosophic filter base by a
neutrosophic induced mapping and the infimum and supremum of two neutrosophic filter bases.
\\
\noindent \textbf{Keywords:} neutrosophic set, single valued neutrosphic set, neutrosophic induced mapping,
single valued neutrosophic filter, neutrosophic completion, single valued neutrosophic ultrafilter.


\section{Introduction}
The notion of neutrosophic set was introduced in 1999 by Smarandache \cite{smarandache}
as a generalization of both the notions of fuzzy set introduced by Zadeh in 1965 \cite{zadeh} and
intuitionistic fuzzy set introduced by Atanassov in 1983 \cite{atanassov}.

In 2012, Salama and Alblowi \cite{salama2012} introduced the notion of neutrosophic topological space
which generalizes both fuzzy topological spaces given by Chang \cite{chang}
and that of intuitionistic fuzzy topological spaces given by Coker \cite{coker}.
Further contributions to Neutrosophic Sets Theory,
which also involve many fields of theoretical and applied mathematics
were recently given by numerous authors (see, for example,
\cite{broumi}, \cite{bera2017}, \cite{bera2018}, \cite{parimala2020},
\cite{al-omeri}, \cite{mehmood2019}, \cite{mehmood2020} and \cite{saber2020}).
In particular, in \cite{salama2013} Salama and Alagamy introduced and studied the notion of neutrosophic filter
and they gave some applications to neutrosophic topological space.

In General Topology, filter bases, filters and ultrafilters are
widely known notions and very popular tools for proving
many properties and characterizations (see, for example \cite{bourbaki,cartan,engelking}).
\\
Rather surprisingly, despite the fact that the class of single valued neutrosophic sets,
is more versatile and has a particular aptitude
for application purposes and resolution of practical real-world problems
than that of neutrosophic sets,
the authors of this article were not able to find
any generalizations of such notions respect on single valued neutrosophic sets,
in known scientific literature.

In this paper, we introduce the notions of filter base, filter and ultrafilter
on single valued neutrosophic sets,
and we prove some of their fundamental properties and relationships
which may be useful for further studies and applications in the class of
single valued neutrosophic topological spaces.

\section{Preliminaries}
In this section we present some basic definitions and results on neutrosophic sets and suitably exemplify them.
Terms and undefined concepts are used as in \cite{engelking} and \cite{gemignani}.

The original definition of neutrosophic set, given in 1999 by Smarandache \cite{smarandache},
refers to the interval $]0^{-},1^{+}[$ of the nonstandard real numbers
and although it is consistent from a philosophical point of view,
unfortunately, it is not suitable to be used for approaching real-world problems.
For such a reason, in 2010, the same author, jointly with Wang, Zhang and Sunderraman \cite{wang},
also introduced the notion of single valued neutrosophic set which,
referring instead to the $[0,1]$ unit range of the usual $\RR$ set of real numbers,
can be usefully used in scientific and engineering applications.

\begin{notation}
Let $\U$ be a set, $I=[0,1]$ be the unit interval of the real numbers, for every $r \in I$,
with $\und{r}$ we denote the constant mapping $\und{r}: \U \to I$ that,
for every $u \in \U$ is defined by $\und{r}(u) = r$.
\\
For every family $\left\{ f_i \right\}_{i \in I}$ of mappings $f_i : \U \to I$,
we denote by:
\begin{itemize}
\item $\bigwedge_{i \in I} f_i$ the infimum mapping $\bigwedge_{i \in I} f_i : \U \to I$
that, for every $u \in\ U$ is defined by $\left( \bigwedge_{i \in I} f_i \right) (u) = \bigwedge_{i \in I} f_i(u)
= \inf \left\{ f_i(u): i \in I \right\}$, and by

\item $\bigvee_{i \in I} f_i$ the supremum mapping $\bigvee_{i \in I} f_i : \U \to I$
that, for every $u \in\ U$ is defined by $\left( \bigvee_{i \in I} f_i \right) (u) = \bigvee_{i \in I} f_i(u)
= \sup \left\{ f_i(u): i \in I \right\}$.
\end{itemize}
In particular, if $f$ and $g$ are two mappings from $\U$ to $I$,
we denote their infimum (which is the minimum) by $f \wedge g$
and their supremum (which is the maximum) by $f \vee g$.
\end{notation}

\begin{definition}{\rm\cite{wang}}
\label{def:singlevaluedneutrosophicset}
Let $\U$ be an initial universe set and $A\subseteq \U$,
a \df{single valued neutrosophic set} over $\U$ (\df{\nameSVNS} for short),
denoted by $\nNS{A}$, is a set of the form
$$\ns{A} = \NSEXT{A}$$  
where $\M{A} : \U \to I$, $\I{A} : \U \to I$ and $\NM{A} : \U \to I$
are the \df{membership function}, the \df{indeterminacy function} and the \df{nonmembership function} of $A$,
respectively.
For every $u \in \U$, $\DM{A}$, $\DI{A}$ and $\DNM{A}$ are said
the \df{degree of membership}, the \df{degree of indeterminacy} and the \df{degree of nonmembership}
of $u$, respectively.
\end{definition}

Since $I=[0,1]$, it clearly results $0 \le \DM{A} + \DI{A} + \DNM{A} \le 3$,
for every $u \in \U$.

\begin{notation}
\label{not:setofsinglevaluedneutrosophicsets}
The set of all the single valued neutrosophic sets over a universe $\U$ will be denoted by $\SSVNS$.
\end{notation}

\begin{definition}{\rm\cite{smarandache,wang}}
\label{def:neutrosophicsubset}
Let $\nNS{A}$ and $\nNS{B}$ be two \nameSVNS[s] over the universe set $\U$,
we say that $\ns{A}$ is a \df{neutrosophic subset} (or simply a subset) of $\ns{B}$
and we write $\ns{A} \NSsubseteq \ns{B}$
if, for every $u \in \U$, it results
$\DM{A} \le \DM{B}$,
$\DI{A} \le \DI{B}$    
and $\DNM{A} \ge \DNM{B}$.
We also say that $\ns{A}$ is contained in $\ns{B}$ or that $\ns{B}$ contains $\ns{A}$.
\end{definition}

It is worth noting that the relation $\NSsubseteq$ satisfies the reflexive, antisymmetrical and transitive properties
and so that $\left( \SSVNS, \NSsubseteq \right)$
forms a partial ordered set (poset) but not a totally ordered set (loset)
as shown in the following example.

\begin{example}
\label{ex:setofsinglevaluedneutrosophicsetnottotallyordered}
Let $\U = \left\{ a,b,c \right\}$ be a finite universe set and
$\nNS{A}, \nNS{B}$ be two \nameSVNS[s] on $\SSVNS$ respectively defined
by the following tabular representations:
\vspace{-3mm}   
\begin{center}
\begin{tabular}{cc}
\begin{nstabular}{\U}{\ns{A}}{\M{A}}{\I{A}}{\NM{A}}
$a$ & 0.5 & 0.3 & 0.2
\\ \hline
$b$ & 0.6 & 0.2 & 0.3
\\ \hline
$c$ & 0.4 & 0.2 & 0.7
\end{nstabular}
\hspace{12mm} &
\begin{nstabular}{\U}{\ns{B}}{\M{B}}{\I{B}}{\NM{B}}
$a$ & 0.2 & 0.2 & 0.2
\\ \hline
$b$ & 0.4 & 0.1 & 0.6
\\ \hline
$c$ & 0.8 & 0.3 & 0.1
\end{nstabular}
\end{tabular}
\end{center}
Then $\ns{A} \NSnotsubseteq \ns{B}$ because $\DM[a]{A} = 0.5 > 0.2 = \DM[a]{B}$
and $\ns{B} \NSnotsubseteq \ns{A}$ because $\DNM[c]{B} = 0.1 < 0.7 = \DNM[c]{A}$
and so the \nameSVNS[s] $\ns{A}$ and $\ns{B}$ are not comparable.
\end{example}

\begin{definition}{\rm\cite{smarandache,wang}}
\label{def:neutrosophicequality}
Let $\nNS{A}$ and $\nNS{B}$ be two \nameSVNS[s] over the universe set $\U$,
we say that $\ns{A}$ is a \df{neutrosophically equal} (or simply equal) to $\ns{B}$ and we write
$\ns{A} \NSeq \ns{B}$
if $\ns{A} \NSsubseteq \ns{B}$ and $\ns{B} \NSsubseteq \ns{A}$.
\end{definition}

\begin{definition}{\rm\cite{wang}}
\label{def:neutrosophicemptyset}
The \nameSVNS $\NSbase{\und{0}}{\und{0}}{\und{1}}$
is said to be the \df{neutrosophic empty set} over $\U$
and it is denoted by $\NSemptyset$,
or more precisely by $\NSemptyset[\U]$ in case it is necessary
to specify the corresponding universe set.
\end{definition}

\begin{definition}{\rm\cite{wang}}
\label{def:neutrosophicabsoluteset}
The \nameSVNS $\NSbase{\und{1}}{\und{1}}{\und{0}}$
is said to be the \df{neutrosophic absolute set} over $\U$
and it is denoted by $\NSabsoluteset$.
\end{definition}

Evidently, for every $A \in \SSVNS$, it results $\NSemptyset \NSsubseteq A \NSsubseteq \NSabsoluteset$.

\begin{definition}{\rm\cite{smarandache,wang}}
\label{def:neutrosophiccomplement}
Let $\nNS{A}$  be a \nameSVNS over the universe set $\U$,
the \df{neutrosophic complement} (or, simply, the complement) of $\ns{A}$, denoted by $\ns{A}^\NScompl$,
is the \nameSVNS
$\ns{A}^\NScompl = \NSbase{\NM{A}}{\und{1}-\I{A}}{\M{A}}$
that is
$\ns{A}^\NScompl = \left\{ \left( u, \DNM{A}, 1-\DI{A}, \DM{A} \right) : u \in \U \right\}$.
\end{definition}

It is a simple matter to verify that for every $\nNS{A} \in \SSVNS$, it results $\left( \ns{A}^\NScompl \right)^{\!\!\NScompl} \NSeq \ns{A}$
and, in particular, that $\NSabsoluteset^\NScompl \NSeq \NSemptyset$
and $\NSemptyset^\NScompl \NSeq \NSabsoluteset$.

\begin{remark}
\label{rem:intersectionandunionwithcomplement}
It is important to point out that, unlike in the crisp sets theory,
the neutrosophic intersection of a \nameSVNS with its complement is not always the neutrosophic empty set,
and the neutrosophic intersection of a \nameSVNS with its complement is not always the neutrosophic absolute set.
In fact, if we consider the universe set $\U = \left\{ a,b \right\}$
and the \nameSVNS on $\SSVNS$ defined
by the following tabular representations:
\vspace{-3mm}
\begin{center}
\begin{nstabular}{\U}{\ns{A}}{\M{A}}{\I{A}}{\NM{A}}
$a$ & 0.2 & 0.6 & 0.8
\\ \hline
$b$ & 1 & 0.5 & 0
\end{nstabular}
\end{center}
we can easily verify that
the neutrosophic intersection, $\ns{A} \NScap \ns{A}^\NScompl$
and the neutrosophic union, $\ns{A} \NScup \ns{A}^\NScompl$
are, respectively, given by the following tabular representations:
\vspace{-3mm}
\begin{center}
\begin{nstabular}{\U}{\ns{A} \NScap \ns{A}^\NScompl}%
{\M{\ns{A} \NScap \ns{A}^\NScomplsmall}}%
{\I{\ns{A} \NScap \ns{A}^\NScomplsmall}}%
{\NM{\ns{A} \NScap \ns{A}^\NScomplsmall}}
$a$ & 0.2 & 0.4 & 0.8
\\ \hline
$b$ & 0 & 0.5 & 1
\end{nstabular}
\end{center}
and
\vspace{-3mm}
\begin{center}
\begin{nstabular}{\U}{\ns{A} \NScup \ns{A}^\NScompl}%
{\M{\ns{A} \NScup \ns{A}^\NScomplsmall}}%
{\I{\ns{A} \NScup \ns{A}^\NScomplsmall}}%
{\NM{\ns{A} \NScup \ns{A}^\NScomplsmall}}
$a$ & 0.8 & 0.6 & 0.2
\\ \hline
$b$ & 1 & 0.5 & 0
\end{nstabular}
\end{center}
and so that $\ns{A} \NScap \ns{A}^\NScompl \NSneq \NSemptyset$
and $\ns{A} \NScup \ns{A}^\NScompl \NSneq \NSabsoluteset$.
\end{remark}

\begin{proposition}{\rm\cite{wang}}
\label{pro:antimonotonicomplementneutrosophicsets}
For every pair $\nNS{A}$ and $\nNS{B}$ of \nameSVNS[s] in $\SSVNS$, we have
that $\ns{A} \NSsubseteq \ns{B}$ iff $\ns{B}^\NScompl \NSsubseteq \ns{A}^\NScompl$.
\end{proposition}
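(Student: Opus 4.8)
The plan is to prove the biconditional by unfolding each of its two sides into the three defining pointwise inequalities of Definition \ref{def:neutrosophicsubset} and then checking that the two resulting systems coincide. Everything in sight is defined componentwise over $\U$, so no construction is required; the statement should reduce to bookkeeping about which component function of a complement corresponds to which component function of the original set. Concretely, I would first fix $\nNS{A}$ and $\nNS{B}$ in $\SSVNS$ and record that, by Definition \ref{def:neutrosophicsubset}, the relation $\ns{A} \NSsubseteq \ns{B}$ is equivalent to the three conditions $\DM{A} \le \DM{B}$, $\DI{A} \le \DI{B}$ and $\DNM{A} \ge \DNM{B}$ holding for every $u \in \U$.

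Next I would compute the two complements by means of Definition \ref{def:neutrosophiccomplement}: the membership, indeterminacy and nonmembership functions of $\ns{A}^\NScompl$ are $\NM{A}$, $\und{1} - \I{A}$ and $\M{A}$, and analogously for $\ns{B}^\NScompl$. Feeding these into Definition \ref{def:neutrosophicsubset}, the relation $\ns{B}^\NScompl \NSsubseteq \ns{A}^\NScompl$ becomes, for every $u \in \U$, the three conditions $\DNM{B} \le \DNM{A}$ (comparing memberships), $1 - \DI{B} \le 1 - \DI{A}$ (comparing indeterminacies) and $\DM{B} \ge \DM{A}$ (comparing nonmemberships).

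The last step is to match the two triples term by term. The membership condition $\DNM{B} \le \DNM{A}$ for the complements is exactly the nonmembership condition $\DNM{A} \ge \DNM{B}$ for the originals; the nonmembership condition $\DM{B} \ge \DM{A}$ is exactly the membership condition $\DM{A} \le \DM{B}$; and $1 - \DI{B} \le 1 - \DI{A}$ is equivalent to $\DI{A} \le \DI{B}$. Since the two systems of pointwise inequalities are thus identical, $\ns{A} \NSsubseteq \ns{B}$ and $\ns{B}^\NScompl \NSsubseteq \ns{A}^\NScompl$ hold under precisely the same circumstances, which is the claimed equivalence.

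The only point demanding a moment's care is the indeterminacy component. The membership and nonmembership functions merely swap roles under complementation, and the difference between the subset conventions (which asks $\le$ on memberships but $\ge$ on nonmemberships) is exactly what converts the swapped inequalities back into the original ones. The indeterminacy function, by contrast, is sent to $\und{1} - \I{A}$ with no role swap, so here the inequality is flipped only by the order-reversing map $t \mapsto 1 - t$ on $I$, which again returns $\DI{A} \le \DI{B}$. I expect no genuine obstacle beyond this; alternatively one could establish only the implication $\ns{A} \NSsubseteq \ns{B} \Rightarrow \ns{B}^\NScompl \NSsubseteq \ns{A}^\NScompl$ and then recover the converse by applying it to the complements together with $\left( \ns{A}^\NScompl \right)^{\!\!\NScompl} \NSeq \ns{A}$, but the symmetric term-by-term comparison is the shorter route.
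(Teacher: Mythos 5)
Your proof is correct: unfolding both sides via Definitions \ref{def:neutrosophicsubset} and \ref{def:neutrosophiccomplement} produces exactly matching systems of pointwise inequalities, with the membership/nonmembership swap absorbed by the opposite inequality conventions and the indeterminacy inequality reversed by the order-reversing map $t \mapsto 1-t$ on $I$. The paper itself states this proposition without proof, importing it from \cite{wang}, so there is nothing to compare against; your componentwise verification (or the one-directional variant combined with the involution $\left( \ns{A}^\NScompl \right)^{\!\!\NScompl} \NSeq \ns{A}$) is precisely the standard argument.
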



\begin{definition}{\rm\cite{salama2013}}
\label{def:neutrosophicunion}
Let $\left\{ \ns{A}_i \right\}_{i\in I}$ be a family of \nameSVNS[s]
$\ns{A}_i=\NS{A_i}$ over a common universe set $\U$,
its \df{neutrosophic union} (or simply union), denoted by $\NSCup_{i \in I} \ns{A}_i$,
is the neutrosophic set $\nNS{A}$ where
$\M{A} = \bigvee_{i \in I} \M{A_i}$,
$\I{A} = \bigvee_{i \in I} \I{A_i}$, and
$\NM{A} = \bigwedge_{i \in I} \NM{A_i}$.
\\
In particular, the neutrosophic union of two single \nameSVNS[s]
$\nNS{A}$ and $\nNS{B}$, denoted by $\ns{A} \NScup \ns{B}$, is the neutrosophic set defined by
$\NSbase{\M{A} \vee \M{B}}{\I{A} \vee \I{B}}{\NM{A} \wedge \NM{B}}$.
\end{definition}

\begin{definition}{\rm\cite{salama2013}}
\label{def:neutrosophicintersection}
Let $\left\{ \ns{A}_i \right\}_{i\in I}$ be a family of \nameSVNS[s]
$\ns{A}_i=\NS{A_i}$ over a common universe set $\U$,
its \df{neutrosophic intersection} (or simply intersection), denoted by $\NSCap_{i \in I} \ns{A}_i$,
is the neutrosophic set $\nNS{A}$ where
$\M{A} = \bigwedge_{i \in I} \M{A_i}$,
$\I{A} = \bigwedge_{i \in I} \I{A_i}$, and
$\NM{A} = \bigvee_{i \in I} \NM{A_i}$.
In particular, the neutrosophic intersection of two \nameSVNS[s]
$\nNS{A}$ and $\nNS{B}$, denoted by $\ns{A} \NScap \ns{B}$, is the neutrosophic set defined by
$\NSbase{\M{A} \wedge \M{B}}{\I{A} \wedge \I{B}}{\NM{A} \vee \NM{B}}$.
\end{definition}

\begin{definition}{\rm\cite{wang}}
\label{def:neutrosophicdisjoint}
Let $\nNS{A}$ and $\nNS{B}$ be two \nameSVNS[s] over $\U$,
we say that $\ns{A}$ and $\ns{B}$ are \df{neutrosophically disjoint}
if $\ns{A} \NScap \ns{B} \NSeq \NSemptyset$.
On the contrary, if $\ns{A} \NScap \ns{B} \NSneq \NSemptyset$
we say that $\ns{A}$ \df{neutrosophically meets} $\ns{B}$
(or that $\ns{A}$ and $\ns{B}$ neutrosophically meet each other).
\end{definition}

\begin{definition}{\rm\cite{salama2013}}
\label{def:neutrosophicmeets}
Let $\cA, \cB \subseteq \SSVNS$ be two nonempty families of \nameSVNS[s] over $\U$,
we say that $\cA$ \df{neutrosophically meets} $\cB$ (or that $\cA$ and $\cB$ neutrosophically meet each other)
if every member of $\cA$ neutrosophically meets any member of $\cB$,
that is if for every $\ns{A} \in \cA$ and every $\ns{B} \in \cB$
it results $\ns{A} \NScap \ns{B} \NSneq \NSemptyset$.
\\
In particular, if $\nNS{C}$ is a \nameSVNS over $\U$ which neutrosophically meets each member
of the family $\cA$, we say that $\ns{C}$ neutrosophically meets $\cA$.
\end{definition}

The neutrosophic operators of union, intersection and complement satisfy many relations
similar to those of crisp set theory,
which are summarized in the following propositions.

\begin{proposition}{\rm\cite{wang}}
\label{pro:propertiesunionandintersection}
For every \nameSVNS $\nNS{A} \in \SSVNS$, we have:
\begin{enumi}
\item $\ns{A} \NScup \ns{A} \NSeq \ns{A}$
\item $\ns{A} \NScup \NSemptyset\NSeq \ns{A}$
\item $\ns{A} \NScup \NSabsoluteset \NSeq \NSabsoluteset$
\item $\ns{A} \NScap \ns{A} \NSeq \ns{A}$
\item $\ns{A} \NScap \NSemptyset \NSeq \NSemptyset$
\item $\ns{A} \NScap \NSabsoluteset \NSeq A$
\end{enumi}
\end{proposition}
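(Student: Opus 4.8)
The plan is to verify each of the six identities directly at the level of the three defining functions, reducing neutrosophic equality to ordinary equality of real-valued mappings on $\U$. First I would recall that, by Definition~\ref{def:neutrosophicequality} together with Definition~\ref{def:neutrosophicsubset}, two \nameSVNS[s] coincide (in the sense of $\NSeq$) exactly when their membership, indeterminacy and nonmembership functions agree pointwise; indeed $\ns{A} \NSeq \ns{B}$ means $\ns{A} \NSsubseteq \ns{B}$ and $\ns{B} \NSsubseteq \ns{A}$, and the two inequalities $\DM{A} \le \DM{B}$ and $\DM{B} \le \DM{A}$ force $\DM{A} = \DM{B}$, and likewise for the other two components. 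Hence it suffices, for each identity, to compare the two sides one point $u \in \U$ at a time.

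Next I would unfold the left-hand side of each identity using Definition~\ref{def:neutrosophicunion} and Definition~\ref{def:neutrosophicintersection}, together with the constant functions $\und{0}$ and $\und{1}$ that define $\NSemptyset$ and $\NSabsoluteset$ in Definitions~\ref{def:neutrosophicemptyset} and~\ref{def:neutrosophicabsoluteset}. For example, the membership, indeterminacy and nonmembership functions of $\ns{A} \NScup \ns{A}$ are $\M{A} \vee \M{A}$, $\I{A} \vee \I{A}$ and $\NM{A} \wedge \NM{A}$; those of $\ns{A} \NScup \NSemptyset$ are $\M{A} \vee \und{0}$, $\I{A} \vee \und{0}$ and $\NM{A} \wedge \und{1}$; those of $\ns{A} \NScap \NSabsoluteset$ are $\M{A} \wedge \und{1}$, $\I{A} \wedge \und{1}$ and $\NM{A} \vee \und{0}$; and the remaining three cases are analogous.

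At this point the whole argument collapses to the elementary identities satisfied by $(\vee, \wedge)$ on the unit interval $I = [0,1]$: for every $r \in I$ one has $r \vee r = r$ and $r \wedge r = r$ (idempotence), $r \vee 0 = r$ and $r \wedge 1 = r$ (neutrality of $0$ and $1$), and $r \vee 1 = 1$ and $r \wedge 0 = 0$ (absorption). Applying these pointwise I recover $\ns{A}$ in cases (1), (2) and (6), the set $\NSabsoluteset$ in case (3), $\ns{A}$ again in case (4), and $\NSemptyset$ in case (5), which is precisely what the proposition asserts.

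I do not expect any genuine obstacle, since the statement is nothing more than the transcription of the bounded-lattice structure of $(I, \vee, \wedge, 0, 1)$ into neutrosophic notation. The only point requiring a little care is the order-reversing behaviour of the nonmembership component: in a union it is combined with $\wedge$ and in an intersection with $\vee$, so the roles of the neutral and absorbing constants $\und{0}$ and $\und{1}$ swap accordingly. Once the pointwise reduction of the first paragraph is in place, each of the six verifications is a one-line computation.
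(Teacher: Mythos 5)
Your proposal is correct, and every one of the six pointwise verifications checks out (including the swap of the roles of $\und{0}$ and $\und{1}$ in the nonmembership component, which is the only place one could slip). Note, however, that the paper contains no proof of this proposition at all: it is imported from Wang et al.\ \cite{wang} and stated without argument, so there is nothing internal to compare against — your reduction of $\NSeq$ to pointwise equality of the three component functions, followed by the bounded-lattice identities of $\left( [0,1], \vee, \wedge, 0, 1 \right)$, is precisely the standard verification one would expect.
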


\begin{proposition}{\rm\cite{wang}}
\label{pro:commutativeneutrosophicsets}
For every pair $\nNS{A}$ and $\nNS{B}$ of \nameSVNS[s] in $\SSVNS$, we have:
\begin{enumi}
\item $\ns{A} \NScup \ns{B} \NSeq \ns{B} \NScup \ns{A}$
\item $\ns{A} \NScap \ns{B} \NSeq \ns{B} \NScap \ns{A}$
\end{enumi}
\end{proposition}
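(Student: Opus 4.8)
The plan is to reduce both commutativity statements to the commutativity of the pointwise lattice operations $\vee$ (supremum) and $\wedge$ (infimum) on the unit interval $I=[0,1]$. By Definition~\ref{def:neutrosophicequality}, the neutrosophic equality $\NSeq$ amounts to the mutual inclusion $\NSsubseteq$, and by Definition~\ref{def:neutrosophicsubset} each inclusion is tested pointwise on the three defining functions. Hence it suffices to verify that the membership, indeterminacy and nonmembership functions of the two sides agree at every $u \in \U$; once pointwise equality of all three functions is established, mutual inclusion (and thus $\NSeq$) follows at once.

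For part (1) I would unfold Definition~\ref{def:neutrosophicunion}. The \nameSVNS $\ns{A} \NScup \ns{B}$ is defined by the functions $\M{A} \vee \M{B}$, $\I{A} \vee \I{B}$ and $\NM{A} \wedge \NM{B}$, whereas $\ns{B} \NScup \ns{A}$ is defined by $\M{B} \vee \M{A}$, $\I{B} \vee \I{A}$ and $\NM{B} \wedge \NM{A}$. Evaluating at an arbitrary $u \in \U$, the identities $\DM{A} \vee \DM{B} = \DM{B} \vee \DM{A}$, $\DI{A} \vee \DI{B} = \DI{B} \vee \DI{A}$ and $\DNM{A} \wedge \DNM{B} = \DNM{B} \wedge \DNM{A}$ hold because the supremum and the infimum of a two-element subset of $I$ do not depend on the order in which its elements are listed. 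Consequently the three defining functions coincide pointwise, which yields $\ns{A} \NScup \ns{B} \NSeq \ns{B} \NScup \ns{A}$.

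Part (2) is entirely analogous, now invoking Definition~\ref{def:neutrosophicintersection}: the roles of $\vee$ and $\wedge$ are interchanged across the three functions, but the same commutativity of $\sup$ and $\inf$ on $I$ applies verbatim, giving $\ns{A} \NScap \ns{B} \NSeq \ns{B} \NScap \ns{A}$.

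I do not expect any genuine obstacle here. The statement simply inherits commutativity from the symmetric binary operations $\vee$ and $\wedge$ on $[0,1]$; the only care needed is the bookkeeping that the nonmembership component uses the operation dual to the one governing the membership and indeterminacy components, but this dual operation is itself commutative, so the argument goes through without complication.
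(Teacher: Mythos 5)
Your proof is correct. The paper gives no proof of this proposition at all---it is quoted from \cite{wang} as a known property of \nameSVNS[s]---and your pointwise reduction to the commutativity of $\vee$ and $\wedge$ on $[0,1]$, with the correct observation that the nonmembership component uses the dual (but equally commutative) operation, is exactly the routine argument the citation stands in for.
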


\begin{proposition}{\rm\cite{wang}}
\label{pro:associativeneutrosophicsets}
For every triplet $\nNS{A}$,  $\nNS{B}$ and $\nNS{C}$
of \nameSVNS[s] in $\SSVNS$, we have:
\begin{enumi}
\item $\ns{A} \NScap \left( \ns{B} \NScap \ns{C} \right) \NSeq
  \left( \ns{A} \NScap \ns{B} \right) \NScap \ns{C} $
\item $\ns{A} \NScup \left( \ns{B} \NScup \ns{C} \right) \NSeq
  \left( \ns{A} \NScup \ns{B} \right) \NScup \ns{C} $
\end{enumi}
\end{proposition}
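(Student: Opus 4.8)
The plan is to reduce both associativity identities to the associativity of the pointwise minimum and maximum on the real interval $I=[0,1]$. By Definitions \ref{def:neutrosophicintersection} and \ref{def:neutrosophicunion}, the neutrosophic intersection and union are built componentwise from the three defining functions using the pointwise operations $\wedge$ and $\vee$, so the whole statement will follow once I verify that each of the three functions (membership, indeterminacy, nonmembership) agrees on the two sides.

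First I would handle part (1). Unwinding the definition of $\NScap$, the membership function of $\ns{A} \NScap \left( \ns{B} \NScap \ns{C} \right)$ is $\M{A} \wedge \left( \M{B} \wedge \M{C} \right)$, while the membership function of $\left( \ns{A} \NScap \ns{B} \right) \NScap \ns{C}$ is $\left( \M{A} \wedge \M{B} \right) \wedge \M{C}$; evaluating both at an arbitrary $u \in \U$ and using the associativity of the minimum on $[0,1]$ gives $\DM{A} \wedge \left( \DM{B} \wedge \DM{C} \right) = \left( \DM{A} \wedge \DM{B} \right) \wedge \DM{C}$, so the two membership functions coincide. The same computation applies verbatim to the indeterminacy functions, which are governed by $\wedge$ as well. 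For the nonmembership functions the defining operation is $\vee$ instead, and the associativity of the maximum on $[0,1]$ yields $\NM{A} \vee \left( \NM{B} \vee \NM{C} \right) = \left( \NM{A} \vee \NM{B} \right) \vee \NM{C}$ pointwise. Since all three functions agree at every point, the two single valued neutrosophic sets are in fact identical, hence neutrosophically equal in the sense of Definition \ref{def:neutrosophicequality} (the double containment $\NSsubseteq$ holds trivially in both directions).

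Part (2) is entirely dual: I would repeat the argument of part (1) with the roles of $\wedge$ and $\vee$ interchanged, since in the neutrosophic union the membership and indeterminacy functions are combined with $\vee$ and the nonmembership function with $\wedge$. There is no substantive obstacle here — the only thing requiring care is the bookkeeping of which of the three functions is governed by $\wedge$ and which by $\vee$, and then invoking the associativity of the minimum and of the maximum accordingly. The proof is thus essentially a transcription of the associativity of these two lattice operations on the unit interval into the componentwise neutrosophic setting.
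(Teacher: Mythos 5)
Your proof is correct. The paper states this proposition without proof, importing it from Wang et al.~\cite{wang} as a preliminary, so there is no internal argument to compare against; your componentwise verification --- unwinding Definitions \ref{def:neutrosophicintersection} and \ref{def:neutrosophicunion} and invoking the associativity of $\wedge$ and $\vee$ on $[0,1]$ separately for the membership, indeterminacy and nonmembership functions, with the roles of $\wedge$ and $\vee$ dualized between intersection and union --- is exactly the standard argument, and you have correctly matched this paper's convention that the indeterminacy function is governed by $\wedge$ under $\NScap$ and by $\vee$ under $\NScup$.
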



\begin{proposition}{\rm\cite{wang}}
\label{pro:neutrosophicsubset_and_neutrosophicoperators}
Let $\nNS{A}$, $\nNS{B} \in \SSVNS$ be two \nameSVNS[s] over a universe $\U$, then:
\begin{enumi}
\item $\ns{A} \NSsubseteq \ns{B}$ iff $\ns{A} \NScap \ns{B} \NSeq \ns{A}$
\item $\ns{A} \NSsubseteq \ns{B}$ iff $\ns{A} \NScup \ns{B} \NSeq \ns{B}$
\end{enumi}
\end{proposition}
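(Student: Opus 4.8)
The plan is to reduce each of the two neutrosophic equivalences to a pointwise statement about the three component functions and then to invoke the elementary lattice identity that, for any two reals $x, y \in [0,1]$, one has $x \le y$ iff $x \wedge y = x$ iff $x \vee y = y$ (dually, $x \ge y$ iff $x \vee y = x$ iff $x \wedge y = y$). Since both the neutrosophic order $\NSsubseteq$ of Definition~\ref{def:neutrosophicsubset} and the neutrosophic equality $\NSeq$ of Definition~\ref{def:neutrosophicequality} are defined componentwise and universally over $u \in \U$, each assertion in the proposition unfolds into three independent scalar conditions, and the whole proof is a matter of matching these against the lattice identity.

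For part~(1), I would first recall from Definition~\ref{def:neutrosophicintersection} that $\ns{A} \NScap \ns{B}$ carries membership $\M{A} \wedge \M{B}$, indeterminacy $\I{A} \wedge \I{B}$ and nonmembership $\NM{A} \vee \NM{B}$. Hence $\ns{A} \NScap \ns{B} \NSeq \ns{A}$ holds exactly when, for every $u \in \U$, one has $\DM{A} \wedge \DM{B} = \DM{A}$, $\DI{A} \wedge \DI{B} = \DI{A}$ and $\DNM{A} \vee \DNM{B} = \DNM{A}$. By the lattice identity these three equalities are respectively equivalent to $\DM{A} \le \DM{B}$, $\DI{A} \le \DI{B}$ and $\DNM{A} \ge \DNM{B}$, which is precisely the definition of $\ns{A} \NSsubseteq \ns{B}$. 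Because each link in this chain is itself an equivalence, reading it in both directions yields the stated ``iff'' at once.

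Part~(2) is entirely dual: using Definition~\ref{def:neutrosophicunion}, the set $\ns{A} \NScup \ns{B}$ has membership $\M{A} \vee \M{B}$, indeterminacy $\I{A} \vee \I{B}$ and nonmembership $\NM{A} \wedge \NM{B}$, so $\ns{A} \NScup \ns{B} \NSeq \ns{B}$ amounts to $\DM{A} \vee \DM{B} = \DM{B}$, $\DI{A} \vee \DI{B} = \DI{B}$ and $\DNM{A} \wedge \DNM{B} = \DNM{B}$ for all $u \in \U$. Applying the same identity, these translate once more into $\DM{A} \le \DM{B}$, $\DI{A} \le \DI{B}$ and $\DNM{A} \ge \DNM{B}$, i.e.\ into $\ns{A} \NSsubseteq \ns{B}$.

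There is no substantive obstacle in this argument. The only point demanding care is the reversed orientation of the nonmembership function: the inequality $\DNM{A} \ge \DNM{B}$ pairs with a supremum in the intersection but with an infimum in the union, and keeping this orientation straight across the two parts is the sole place where a sign slip could creep in.
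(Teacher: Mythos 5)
Your proof is correct. Note that the paper does not actually prove this proposition: it is stated as a preliminary quoted from \cite{wang}, so there is no in-paper argument to compare against, and your reduction to the scalar lattice identity $x \le y \Leftrightarrow x \wedge y = x \Leftrightarrow x \vee y = y$, applied componentwise and pointwise in $u \in \U$, is exactly the standard argument one would supply. The only step you gloss over is the unfolding of $\NSeq$: by Definition~\ref{def:neutrosophicequality} it is the conjunction of the two inclusions $\NSsubseteq$, not componentwise equality of the functions $\M{A}$, $\I{A}$, $\NM{A}$; but since each component order is a pointwise order on $[0,1]$ and $\le$ is antisymmetric, the double inclusion is equivalent to the three pointwise identities you invoke, and with that one-line remark your chain of equivalences is complete, including the correctly handled reversal of orientation for the nonmembership component in both parts.
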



\begin{proposition}{\rm\cite{wang}}
\label{pro:absorptionneutrosophicsets}
For every pair $\nNS{A}$ and $\nNS{B}$ of \nameSVNS[s] in $\SSVNS$, we have:
\begin{enumi}
\item $\ns{A} \NScup \left(\ns{A} \NScap \ns{B} \right) \NSeq \ns{A}$
\item $\ns{A} \NScap \left(\ns{A} \NScup \ns{B} \right) \NSeq \ns{A}$
\end{enumi}
\end{proposition}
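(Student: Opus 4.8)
The plan is to reduce both identities to the classical absorption laws of the lattice $([0,1],\min,\max)$, applied componentwise. By Definition~\ref{def:neutrosophicequality} together with Definition~\ref{def:neutrosophicsubset}, the neutrosophic equality $\NSeq$ holds exactly when the membership, indeterminacy and nonmembership functions of the two sides agree at every point $u\in\U$, so it suffices to check three scalar identities pointwise.

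For part (1) I would first expand $\ns{A}\NScap\ns{B}$ by Definition~\ref{def:neutrosophicintersection}, whose components are $\M{A}\wedge\M{B}$, $\I{A}\wedge\I{B}$ and $\NM{A}\vee\NM{B}$, and then apply the outer union by Definition~\ref{def:neutrosophicunion}. The three resulting component functions are
\[
\M{A}\vee\left(\M{A}\wedge\M{B}\right),\qquad \I{A}\vee\left(\I{A}\wedge\I{B}\right),\qquad \NM{A}\wedge\left(\NM{A}\vee\NM{B}\right).
\]
Evaluated at an arbitrary $u\in\U$, each of these has the shape $a\vee(a\wedge b)$ or its dual $a\wedge(a\vee b)$ with $a,b\in[0,1]$, and both reduce to $a$ by absorption in the chain $[0,1]$. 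Hence the three functions equal $\M{A}$, $\I{A}$ and $\NM{A}$ respectively, which yields $\ns{A}\NScup\left(\ns{A}\NScap\ns{B}\right)\NSeq\ns{A}$. Part (2) is perfectly dual: expanding $\ns{A}\NScup\ns{B}$ and intersecting with $\ns{A}$ produces the components $\M{A}\wedge(\M{A}\vee\M{B})$, $\I{A}\wedge(\I{A}\vee\I{B})$ and $\NM{A}\vee(\NM{A}\wedge\NM{B})$, which collapse to $\M{A}$, $\I{A}$ and $\NM{A}$ by the same two identities.

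I do not expect a genuine obstacle; the only place asking for care is the role reversal of the nonmembership component, where union takes the infimum and intersection the supremum. One must therefore apply the absorption law in its dual form for $\NM{}$, namely $a\wedge(a\vee b)=a$ in part (1) and $a\vee(a\wedge b)=a$ in part (2), instead of mechanically reusing the membership computation.

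As a cleaner alternative I could avoid the explicit pointwise computation and argue structurally. Since the componentwise definitions immediately give $\ns{A}\NScap\ns{B}\NSsubseteq\ns{A}$, part (2) of Proposition~\ref{pro:neutrosophicsubset_and_neutrosophicoperators} applied to the pair $\ns{A}\NScap\ns{B}$ and $\ns{A}$ gives $\left(\ns{A}\NScap\ns{B}\right)\NScup\ns{A}\NSeq\ns{A}$, and commutativity (Proposition~\ref{pro:commutativeneutrosophicsets}) rearranges this into identity (1). Dually, $\ns{A}\NSsubseteq\ns{A}\NScup\ns{B}$ together with part (1) of the same proposition yields identity (2).
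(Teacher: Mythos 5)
Your proposal is correct, but there is nothing in the paper to compare it against: Proposition~\ref{pro:absorptionneutrosophicsets} is quoted from \cite{wang} as a preliminary and the paper gives no proof of it. Your pointwise argument is the standard verification and is complete: by Definitions~\ref{def:neutrosophicsubset} and \ref{def:neutrosophicequality}, $\NSeq$ does reduce to equality of the three component functions at every $u\in\U$; the expansions $\M{A}\vee\left(\M{A}\wedge\M{B}\right)$, $\I{A}\vee\left(\I{A}\wedge\I{B}\right)$, $\NM{A}\wedge\left(\NM{A}\vee\NM{B}\right)$ match Definitions~\ref{def:neutrosophicunion} and \ref{def:neutrosophicintersection} (note the indeterminacy component behaves like membership throughout this paper, which you used correctly); and you rightly flag that the nonmembership component needs the dual absorption identity, which is the one spot a mechanical computation could go wrong. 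Your structural alternative is also sound and arguably cleaner in context: $\ns{A}\NScap\ns{B}\NSsubseteq\ns{A}\NSsubseteq\ns{A}\NScup\ns{B}$ is available from Proposition~\ref{pro:unionandintersectiongeneralized}, after which Proposition~\ref{pro:neutrosophicsubset_and_neutrosophicoperators} and commutativity (Proposition~\ref{pro:commutativeneutrosophicsets}) yield both identities without touching the membership functions, reusing only results the paper already states.
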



\begin{proposition}{\rm\cite{wang}}
\label{pro:monotonic_neutrosophic_operators}
Let $\nNS{A}$,  $\nNS{B}$, $\nNS{C}$ and $\nNS{D}$
be \nameSVNS[s] in $\SSVNS$
such that $\ns{A} \NSsubseteq \ns{B}$ and $\ns{C} \NSsubseteq \ns{D}$, then:
\begin{enumi}
\item $\ns{A} \NScup \ns{C} \NSsubseteq \ns{B} \NScup \ns{D}$
\item $\ns{A} \NScap \ns{C} \NSsubseteq \ns{B} \NScap \ns{D}$
\end{enumi}
\end{proposition}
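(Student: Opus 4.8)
The plan is to reduce both inclusions to pointwise inequalities among real numbers in $I$ and to lean on the elementary monotonicity of the binary $\vee$ and $\wedge$ operations. The single fact I would isolate first is purely arithmetical: for $a,b,c,d \in I$, the hypotheses $a \le b$ and $c \le d$ together force both $a \vee c \le b \vee d$ and $a \wedge c \le b \wedge d$. This is immediate from the definition of the supremum and infimum of a two–element set, so it can be invoked without a separate proof.

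For part (1) I would fix an arbitrary $u \in \U$ and unwind the hypotheses $\ns{A} \NSsubseteq \ns{B}$ and $\ns{C} \NSsubseteq \ns{D}$ by Definition \ref{def:neutrosophicsubset} into the scalar inequalities $\DM{A} \le \DM{B}$, $\DI{A} \le \DI{B}$, $\DNM{A} \ge \DNM{B}$ and, likewise, $\DM{C} \le \DM{D}$, $\DI{C} \le \DI{D}$, $\DNM{C} \ge \DNM{D}$. Reading off the three functions of the unions from Definition \ref{def:neutrosophicunion}, the membership comparison $(\M{A} \vee \M{C})(u) \le (\M{B} \vee \M{D})(u)$ and the analogous indeterminacy comparison follow directly from the monotonicity of $\vee$ applied to the first two pairs of inequalities, while the nonmembership comparison $(\NM{A} \wedge \NM{C})(u) \ge (\NM{B} \wedge \NM{D})(u)$ follows from the monotonicity of $\wedge$ applied to the reversed inequalities $\DNM{A} \ge \DNM{B}$ and $\DNM{C} \ge \DNM{D}$. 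Since $u$ was arbitrary, a second appeal to Definition \ref{def:neutrosophicsubset} delivers $\ns{A} \NScup \ns{C} \NSsubseteq \ns{B} \NScup \ns{D}$.

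Part (2) is entirely symmetric: invoking Definition \ref{def:neutrosophicintersection} instead, the roles of $\vee$ and $\wedge$ are interchanged, so the membership and indeterminacy comparisons now use monotonicity of $\wedge$ and the nonmembership comparison uses monotonicity of $\vee$ on the reversed inequalities. I do not expect a genuine obstacle anywhere in this argument; the only point that demands care is the bookkeeping of the orientation of the three coordinate inequalities — in particular, remembering that the subset relation reverses the nonmembership inequality, so that this antitone coordinate must always be paired with the opposite monotonicity statement. A more compressed write-up could instead be obtained by appealing to Proposition \ref{pro:neutrosophicsubset_and_neutrosophicoperators}, but the pointwise argument is the most transparent and mirrors the classical crisp-set proof exactly.
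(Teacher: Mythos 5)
Your proof is correct: the reduction to the scalar monotonicity of $\vee$ and $\wedge$ on $I$, applied coordinatewise via Definitions \ref{def:neutrosophicsubset}, \ref{def:neutrosophicunion} and \ref{def:neutrosophicintersection}, goes through exactly as you describe, and you handle the one delicate point — that the nonmembership coordinate is antitone under $\NSsubseteq$ and must be paired with the reversed inequalities — correctly in both parts. Note that the paper itself states this proposition as a preliminary cited from \cite{wang} and supplies no proof, so there is nothing to compare against; your pointwise verification is the standard argument, and the algebraic alternative you sketch via Proposition \ref{pro:neutrosophicsubset_and_neutrosophicoperators} (e.g.\ computing $\left( \ns{A} \NScup \ns{C} \right) \NScup \left( \ns{B} \NScup \ns{D} \right) \NSeq \ns{B} \NScup \ns{D}$ using commutativity and associativity) would also work.
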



\begin{proposition}{\rm\cite{wang}}
\label{pro:unionandintersectiongeneralized}
Let $\left\{ \ns{A}_i \right\}_{i\in I}$ be a family of \nameSVNS[s]
$\ns{A}_i=\NS{A_i}$ over a common universe set $\U$,
then, for every $i \in I$, we have that
$\NSCap_{i \in I} \ns{A}_i \, \NSsubseteq \, \ns{A}_i
\, \NSsubseteq \, \NSCup_{i \in I} \ns{A}_i $.
\end{proposition}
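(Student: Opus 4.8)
The plan is to peel the definition of neutrosophic inclusion (Definition~\ref{def:neutrosophicsubset}) apart into its three pointwise scalar inequalities and to reduce each of them to a single elementary fact about reals in $I$: the infimum of a family is a lower bound for every member of that family, and the supremum is an upper bound. First I would fix an arbitrary index $j \in I$ together with an arbitrary point $u \in \U$; since the two required inclusions are verified pointwise and for a generic index, establishing them for such a generic $j$ and $u$ delivers them for all indices and all points at once.

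For the left-hand inclusion $\NSCap_{i \in I} \ns{A}_i \NSsubseteq \ns{A}_j$, put $\ns{C} = \NSCap_{i \in I} \ns{A}_i$. By Definition~\ref{def:neutrosophicintersection}, its three component functions evaluated at $u$ are $\DM{C} = \inf\{\DM{A_i} : i \in I\}$, $\DI{C} = \inf\{\DI{A_i} : i \in I\}$ and $\DNM{C} = \sup\{\DNM{A_i} : i \in I\}$. Because the infimum is a lower bound for each element, I obtain $\DM{C} \le \DM{A_j}$ and $\DI{C} \le \DI{A_j}$; dually, because the supremum is an upper bound, $\DNM{C} \ge \DNM{A_j}$. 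These three inequalities are exactly the conditions of Definition~\ref{def:neutrosophicsubset} for $\ns{C} \NSsubseteq \ns{A}_j$, which is the asserted inclusion.

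The right-hand inclusion $\ns{A}_j \NSsubseteq \NSCup_{i \in I} \ns{A}_i$ is handled by the mirror-image argument. Writing $\ns{D} = \NSCup_{i \in I} \ns{A}_i$ and invoking Definition~\ref{def:neutrosophicunion}, one has $\DM{D} = \sup\{\DM{A_i} : i \in I\}$ and $\DI{D} = \sup\{\DI{A_i} : i \in I\}$, whence $\DM{A_j} \le \DM{D}$ and $\DI{A_j} \le \DI{D}$, while $\DNM{D} = \inf\{\DNM{A_i} : i \in I\}$ forces $\DNM{A_j} \ge \DNM{D}$. Once more these are precisely the defining inequalities, now witnessing $\ns{A}_j \NSsubseteq \ns{D}$, and chaining the two inclusions completes the proof.

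I do not expect a genuine obstacle here: the statement is the neutrosophic-lattice analogue of the classical fact that a meet lies below each of its factors and a join above each of them. The only point that deserves care is the sign reversal carried by the nonmembership component. In Definition~\ref{def:neutrosophicsubset} inclusion requires $\DNM{}$ to be \emph{larger}, not smaller, and correspondingly the intersection takes the supremum of the nonmembership functions whereas the union takes their infimum. Keeping this reversal straight is exactly what makes the infimum/supremum dichotomy align correctly with the two inclusions, and it is the single feature that separates this argument from the purely fuzzy case.
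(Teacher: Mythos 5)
Your proof is correct: the pointwise reduction to ``an infimum is a lower bound, a supremum is an upper bound,'' with the reversed inequality on the nonmembership component handled exactly as Definition~\ref{def:neutrosophicsubset} requires, is the standard argument for this fact. Note that the paper itself gives no proof here --- Proposition~\ref{pro:unionandintersectiongeneralized} is quoted from \cite{wang} as a known preliminary --- so there is nothing to diverge from; your argument is the expected one, and your closing remark about the $\omega$-component sign reversal correctly identifies the only point where this differs from the purely fuzzy case.
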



\begin{proposition}{\rm\cite{wang}}
\label{pro:generalizeddistributive}
Let respectively $\nNS{A} \in \SSVNS$ be a \nameSVNS
and $\left\{ \ns{B}_i \right\}_{i\in I} \subseteq \SSVNS$  be a family of \nameSVNS[s]
$\ns{B}_i=\NS{B_i}$ over a common universe set $\U$,
then we have:
\begin{enumi}
\item $\ns{A} \NScap \left( \NSCup_{i \in I} \ns{B}_i \right) \NSeq \,
\NSCup_{i \in I} \left( \ns{A} \NScap \ns{B}_i \right)$
\item $\ns{A}\NScup \left( \NSCap_{i \in I} \ns{B}_i \right) \NSeq \,
\NSCap_{i \in I} \left( \ns{A} \NScup \ns{B}_i \right)$
\end{enumi}
\end{proposition}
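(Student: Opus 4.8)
The plan is to reduce the neutrosophic equality $\NSeq$ to pointwise equality of the three defining functions and then to invoke the infinite distributive laws of the chain $I=[0,1]$. First I would record the elementary remark that, by Definition~\ref{def:neutrosophicequality} and Definition~\ref{def:neutrosophicsubset}, two \nameSVNS[s] $\ns{X}$ and $\ns{Y}$ over $\U$ satisfy $\ns{X} \NSeq \ns{Y}$ if and only if $\M{X} = \M{Y}$, $\I{X} = \I{Y}$ and $\NM{X} = \NM{Y}$ as mappings $\U \to I$, since mutual containment forces both $\le$ and $\ge$ on each coordinate. Hence it suffices to compare, coordinate by coordinate and pointwise in $u \in \U$, the membership, indeterminacy and nonmembership functions of the two sides of each identity.

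For part~(1) I would expand both sides using Definition~\ref{def:neutrosophicunion} and Definition~\ref{def:neutrosophicintersection}. The left-hand side $\ns{A} \NScap \left( \NSCup_{i \in I} \ns{B}_i \right)$ has membership function $\M{A} \wedge \bigvee_{i \in I} \M{B_i}$, indeterminacy function $\I{A} \wedge \bigvee_{i \in I} \I{B_i}$ and nonmembership function $\NM{A} \vee \bigwedge_{i \in I} \NM{B_i}$, whereas the right-hand side $\NSCup_{i \in I} \left( \ns{A} \NScap \ns{B}_i \right)$ has membership $\bigvee_{i \in I}\left( \M{A} \wedge \M{B_i} \right)$, indeterminacy $\bigvee_{i \in I}\left( \I{A} \wedge \I{B_i} \right)$ and nonmembership $\bigwedge_{i \in I}\left( \NM{A} \vee \NM{B_i} \right)$. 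Fixing $u \in \U$ and writing $r = \DM{A}$ and $s_i = \DM{B_i}$, the membership identity becomes $r \wedge \sup_{i} s_i = \sup_{i}\left( r \wedge s_i \right)$; the indeterminacy identity has the same shape, and the nonmembership identity is the order-dual $r \vee \inf_i s_i = \inf_i\left( r \vee s_i \right)$.

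Thus everything rests on the two scalar lattice identities in $I=[0,1]$,
\[
r \wedge \sup_{i \in I} s_i = \sup_{i \in I}\left( r \wedge s_i \right)
\qquad\text{and}\qquad
r \vee \inf_{i \in I} s_i = \inf_{i \in I}\left( r \vee s_i \right),
\]
valid for all $r \in I$ and all families $\left\{ s_i \right\}_{i \in I} \subseteq I$, and I expect this to be the only step requiring genuine care. The inequality $\sup_i\left( r \wedge s_i \right) \le r \wedge \sup_i s_i$ is immediate from monotonicity, since $r \wedge s_i \le r$ and $r \wedge s_i \le s_i \le \sup_i s_i$ for every $i$. The reverse inequality is where the supremum (rather than a finite maximum) bites: writing $S = \sup_i s_i$, if $S \le r$ then $r \wedge s_i = s_i$ for every $i$ and the claim is trivial, while if $r \le S$ one argues with an $\varepsilon$; for every $\varepsilon > 0$ there is $j$ with $s_j > S - \varepsilon \ge r - \varepsilon$, whence $r \wedge s_j > r - \varepsilon$ and therefore $\sup_i\left( r \wedge s_i \right) \ge r - \varepsilon$, giving $\sup_i\left( r \wedge s_i \right) \ge r = r \wedge S$ as $\varepsilon \to 0$. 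Equivalently, this is just the statement that the complete chain $[0,1]$ is a frame. The dual identity for $\vee$ and $\inf$ follows by the same argument with the inequalities reversed.

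Finally, part~(2) is the order-dual of part~(1): expanding $\ns{A} \NScup \left( \NSCap_{i \in I} \ns{B}_i \right)$ and $\NSCap_{i \in I}\left( \ns{A} \NScup \ns{B}_i \right)$ coordinatewise reduces the membership and indeterminacy coordinates to $r \vee \inf_i s_i = \inf_i\left( r \vee s_i \right)$ and the nonmembership coordinate to $r \wedge \sup_i s_i = \sup_i\left( r \wedge s_i \right)$, that is, to the very same two scalar identities already established. Hence no new work is needed beyond interchanging the roles of the two lattice laws.
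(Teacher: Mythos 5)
Your proof is correct, but note that the paper itself contains no proof of this proposition: it is stated in the Preliminaries as a quoted result attributed to \cite{wang}, so there is no in-paper argument to compare yours against. Your route is the standard one and it is sound: the observation that $\ns{X} \NSeq \ns{Y}$ forces pointwise equality of all three coordinate functions (via Definitions \ref{def:neutrosophicequality} and \ref{def:neutrosophicsubset}), the coordinatewise expansion via Definitions \ref{def:neutrosophicunion} and \ref{def:neutrosophicintersection} (correctly accounting for the fact that the indeterminacy function transforms like the membership function while the nonmembership function is order-dual), and the reduction to the two scalar identities $r \wedge \sup_{i} s_i = \sup_{i}(r \wedge s_i)$ and $r \vee \inf_{i} s_i = \inf_{i}(r \vee s_i)$ in $[0,1]$, whose nontrivial halves your $\varepsilon$-argument establishes correctly. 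One minor simplification: since $[0,1]$ is a chain, the $\varepsilon$ can be avoided by a two-case split --- if every $s_i < r$ then $r \wedge s_i = s_i$ for all $i$ and $\sup_i s_i \le r$, so both sides equal $\sup_i s_i$; if instead some $s_j \ge r$ then $r \wedge s_j = r$, so both sides equal $r$ --- but this is a matter of taste, not a gap.
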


\begin{proposition}{\rm\cite{wang}}
\label{pro:generalizeddemorganlaws}
Let $\left\{ \ns{A}_i \right\}_{i\in I} \subseteq \SSVNS$ be a family of \nameSVNS[s]
$\ns{A}_i=\NS{A_i}$ over a common universe set $\U$, it results:
\begin{enumi}
\item $\left( \NSCup_{i \in I} \ns{A}_i \right)^\NScompl \NSeq \,
\NSCap_{i \in I} \ns{A}_i^\NScompl $
\item $\left( \NSCap_{i \in I} \ns{A}_i \right)^\NScompl \NSeq \,
\NSCup_{i \in I} \ns{A}_i^\NScompl $
\end{enumi}
\end{proposition}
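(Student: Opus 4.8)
The plan is to verify the neutrosophic equality in each of the two statements by comparing, componentwise and pointwise, the three defining functions of the left-hand and right-hand \nameSVNS[s]. Since by Definition~\ref{def:neutrosophicequality} two \nameSVNS[s] are neutrosophically equal exactly when each is contained in the other, and since containment is checked pointwise on membership, indeterminacy and nonmembership (Definition~\ref{def:neutrosophicsubset}), it suffices to show that the corresponding membership, indeterminacy and nonmembership functions coincide at every $u \in \U$.

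For statement~(1), I would first compute the three functions of $\NSCup_{i \in I} \ns{A}_i$ from Definition~\ref{def:neutrosophicunion}, namely $\bigvee_{i \in I} \M{A_i}$, $\bigvee_{i \in I} \I{A_i}$ and $\bigwedge_{i \in I} \NM{A_i}$, and then apply the complement of Definition~\ref{def:neutrosophiccomplement}, which swaps the membership and nonmembership functions and replaces the indeterminacy function $f$ by $\und{1} - f$. This yields for $\left( \NSCup_{i \in I} \ns{A}_i \right)^\NScompl$ the membership $\bigwedge_{i \in I} \NM{A_i}$, the indeterminacy $\und{1} - \bigvee_{i \in I} \I{A_i}$, and the nonmembership $\bigvee_{i \in I} \M{A_i}$. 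On the other side, each $\ns{A}_i^\NScompl$ has membership $\NM{A_i}$, indeterminacy $\und{1} - \I{A_i}$ and nonmembership $\M{A_i}$, so by Definition~\ref{def:neutrosophicintersection} the set $\NSCap_{i \in I} \ns{A}_i^\NScompl$ has membership $\bigwedge_{i \in I} \NM{A_i}$, indeterminacy $\bigwedge_{i \in I} \left( \und{1} - \I{A_i} \right)$ and nonmembership $\bigvee_{i \in I} \M{A_i}$. The membership and nonmembership functions manifestly agree on both sides.

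The only nontrivial point, and the step I expect to be the main obstacle, is the equality of the indeterminacy functions, that is
$$\und{1} - \bigvee_{i \in I} \I{A_i} \;=\; \bigwedge_{i \in I} \left( \und{1} - \I{A_i} \right).$$
Evaluated at an arbitrary $u \in \U$, this reduces to the real-number identity $1 - \sup_{i \in I} x_i = \inf_{i \in I} (1 - x_i)$ for the family $x_i = \DI{A_i} \in [0,1]$, which holds because the map $t \mapsto 1 - t$ is an order-reversing bijection of $[0,1]$ onto itself and hence carries suprema to infima. I would prove it by the standard double-inequality argument: from $x_i \le \sup_j x_j$ we get $1 - x_i \ge 1 - \sup_j x_j$ for every $i$, so $1 - \sup_j x_j$ is a lower bound of $\{1 - x_i\}$ and therefore $1 - \sup_j x_j \le \inf_i (1 - x_i)$; the reverse inequality follows symmetrically from $\inf_j(1 - x_j) \le 1 - x_i$, i.e. $x_i \le 1 - \inf_j(1-x_j)$ for all $i$, whence $\sup_i x_i \le 1 - \inf_j(1-x_j)$.

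Finally, statement~(2) follows by the same componentwise computation with the roles of $\bigvee$ and $\bigwedge$ interchanged; alternatively, I would deduce it from~(1) by replacing each $\ns{A}_i$ with $\ns{A}_i^\NScompl$, taking complements, and invoking the involutivity $\left( \ns{A}^\NScompl \right)^\NScompl \NSeq \ns{A}$ already noted after Definition~\ref{def:neutrosophiccomplement}.
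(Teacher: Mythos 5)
Your proof is correct: the componentwise, pointwise verification under Definitions \ref{def:neutrosophiccomplement}, \ref{def:neutrosophicunion} and \ref{def:neutrosophicintersection} is exactly what is needed, and you rightly isolate the only nontrivial step — the indeterminacy identity $1-\sup_{i\in I}x_i=\inf_{i\in I}(1-x_i)$, which your double-inequality argument (or the observation that $t\mapsto 1-t$ is an order-reversing bijection of $[0,1]$) settles, while membership and nonmembership match by inspection since the complement merely swaps them. Note that the paper itself gives no proof to compare against — Proposition \ref{pro:generalizeddemorganlaws} is quoted from \cite{wang} as a preliminary — so your direct computation, together with the clean deduction of statement (2) from statement (1) via the involutivity $\left(\ns{A}^\NScompl\right)^{\!\NScompl}\NSeq\ns{A}$ (which implicitly also uses that complementation respects neutrosophic equality, immediate from Proposition \ref{pro:antimonotonicomplementneutrosophicsets}), is the standard and complete argument.
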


\begin{definition}{\rm\cite{latreche2020,salama2014}}
\label{def:neutrosophicimage}
Let $f: \U \to \V$ be a mapping between two universe sets $\U$ and $\V$,
and $\nNS{A}$ be a \nameSVNS over $\U$.
The \df{neutrosophic image} of $\ns{A}$ by $f$,
denoted by $\ns{f}\left(\ns{A}\right)$, is the \nameSVNS over $\V$ defined by:
$$\ns{f}\left(\ns{A}\right) = \NSbase[\V]
{f\left(\M{A}\right)}%
{f\left(\I{A}\right)}%
{f\left(\NM{A}\right)}$$
where the mappings $f\left(\M{A}\right) : \V \to I$, $f\left(\I{A}\right) : \V \to I$
and $f\left(\NM{A}\right) : \V \to I$ are defined respectively by:
$$
\arraycolsep=1.5pt   
\begin{array}{ll}
f\left(\M{A}\right)(v) &= \left\{%
\arraycolsep=14pt   
\begin{array}{ll}
{\displaystyle \inf_{u\in \fibre{f}{v}} \DM{A} }  &
\text{if } \fibre{f}{v} \neq \emptyset
\\[4mm]
0 &
\text{otherwise}
\end{array}
\right. ,
\\[8mm]
f\left(\I{A}\right)(v) &= \left\{%
\arraycolsep=14pt   
\begin{array}{ll}
{\displaystyle \inf_{u\in \fibre{f}{v}} \DI{A} }  &
\text{if } \fibre{f}{v} \neq \emptyset
\\[4mm]
0 &
\text{otherwise}
\end{array}
\right. ,
\\[8mm]
f\left(\NM{A}\right)(v) &= \left\{%
\arraycolsep=
14pt   
\begin{array}{ll}
{\displaystyle \sup_{u\in \fibre{f}{v}} \DNM{A} }  &
\text{if } \fibre{f}{v} \neq \emptyset
\\[4mm]
1 &
\text{otherwise}
\end{array}
\right.
\end{array} 
$$
for every $v \in \V$.
\end{definition}

\begin{definition}{\rm\cite{latreche2020,salama2014}}
\label{def:neutrosophicinverseimage}
Let $f: \U \to \V$ be a mapping between two universe sets $\U$ and $\V$,
and $\nNS[\V]{B}$ be a \nameSVNS over $\V$.
The \df{neutrosophic inverse image} of $\ns{B}$ by $f$,
denoted by $\inv{\ns{f}}\left(\ns{B}\right)$, is the \nameSVNS over $\U$ defined by:
$$\inv{\ns{f}}\left(\ns{B}\right) = \NSbase{\inv{f}\left(\M{B}\right)}%
{\inv{f}\left(\I{B}\right)}%
{\inv{f}\left(\NM{B}\right)}$$
where the mappings $\inv{f}\left(\M{B}\right) : \U \to I$, $\inv{f}\left(\I{B}\right) : \U \to I$
and $\inv{f}\left(\NM{B}\right) : \U \to I$ are defined respectively by:
$$
\arraycolsep=1.5pt   
\begin{array}{ll}
\inv{f}\left(\M{B}\right)(u) &= \DM[f(u)]{B}  \, ,
\\[4mm]
\inv{f}\left(\I{B}\right)(u) &= \DI[f(u)]{B}  \, ,
\\[4mm]
\inv{f}\left(\NM{B}\right)(u) &= \DNM[f(u)]{B}
\end{array}
$$
for every $u \in \U$.
\end{definition}

\begin{remark}
Let us note that the notation used to denote neutrosophic images
and neutrosophic inverse images implicitly underlines the fact that they are not real images or
counterimages since the mapping $f$ is defined between the universe sets $\U$ and $\V$
while the definition refers to the sets $\SSVNS$ and $\SSVNS[\V]$
of all the \nameSVNS[s] of that respective universe sets.
More properly, this means that we consider a mapping $\ns{f}:\SSVNS \to \SSVNS[\V]$
induced by $f:\U \to \V$ over the corresponding sets of all \nameSVNS[s].
\end{remark}

\begin{example}
\label{ex:neutrosophicimagesandinverseimages}
Let $\U = \left\{ a,b,c \right\}$ and $\V = \left\{ \alpha, \beta, \gamma , \delta \right\}$
be two finite universe sets,
$f: \U \to \V$ be a mapping defined by $f(a)=f(c)=\beta$ and $f(b)=\alpha$.
Let us consider a \nameSVNS $\nNS{A}$ on $\SSVNS$ and a \nameSVNS $\nNS{B}$ on $\SSVNS[\V]$
respectively defined by the following tabular representations:
\vspace{-3mm}
\begin{center}
\begin{tabular}[t]{cc}
\begin{nstabular}{\U}{\ns{A}}{\M{A}}{\I{A}}{\NM{A}}
$a$ & 0.5 & 0.3 & 0.2
\\ \hline
$b$ & 0.6 & 0.2 & 0.3
\\ \hline
$c$ & 0.4 & 0.2 & 0.7
\end{nstabular}
\hspace{16mm} &
\begin{nstabular}{\V}{\ns{B}}{\M{B}}{\I{B}}{\NM{B}}
$\alpha$ & 0.1 & 0.7 & 0.9
\\ \hline
$\beta$ & 0.5 & 0.3 & 0.1
\\ \hline
$\gamma$ & 0.8 & 0.4 & 0.2
\\ \hline
$\delta$ & 0.4 & 0.6 & 0.8
\end{nstabular}
\\
\end{tabular}
\end{center}
Then the neutrosophic image $\ns{f}\left(\ns{A}\right) = \NSbase[\V]
{f\left(\M{A}\right)} {f\left(\I{A}\right)} {f\left(\NM{A}\right)}$
of $\ns{A}$ by $f$
and the neutrosophic inverse image
$\inv{\ns{f}}\left(\ns{B}\right) = \NSbase{\inv{f}\left(\M{B}\right)}%
{\inv{f}\left(\I{B}\right)} {\inv{f}\left(\NM{B}\right)}$
of $\ns{B}$ by $f$ are given by:
\vspace{-3mm}
\begin{center}
\begin{nstabular}{\V}{\ns{f}\left(\ns{A}\right)}%
{f\left(\M{A}\right)}%
{f\left(\I{A}\right)}%
{f\left(\NM{A}\right)}
$\alpha$ & 0.6 & 0.2 & 0.3
\\ \hline
$\beta$ & 0.4 & 0.2 & 0.7
\\ \hline
$\gamma$ & 0 & 0 & 1
\\ \hline
$\delta$ & 0 & 0 & 1
\end{nstabular}
\end{center}
and:
\vspace{-3mm}
\begin{center}
\begin{nstabular}{\U}{\quad\inv{\ns{f}}\left(\ns{B}\right) }%
{\inv{f}\left(\M{B}\right)}%
{\inv{f}\left(\I{B}\right)}%
{\inv{f}\left(\NM{B}\right)}
$a$ & 0.5 & 0.3 & 0.1
\\ \hline
$b$ & 0.1 & 0.7 & 0.9
\\ \hline
$c$ & 0.5 & 0.3 & 0.1
\end{nstabular}
\end{center}
respectively.
\end{example}

\begin{proposition}{\rm\cite{salama2014}}
\label{pro:propertiesofneutrosophicimagesandinverseimages}
Let $f: \U \to \V$ be a mapping between two universe sets $\U$ and $\V$,
$\nNS{A}$ be a \nameSVNS over $\U$
and $\nNS[\V]{B}$ be a \nameSVNS over $\V$, then the following hold:
\begin{enumi}
\item $\ns{f}\left( \NSemptyset[\U]\right) \NSeq \NSemptyset[\V]$
\item $\inv{\ns{f}}\left( \NSemptyset[\V]\right) \NSeq \NSemptyset[\U]$
\item $\inv{\ns{f}}\left( \NSabsoluteset[\V]\right) \NSeq \NSabsoluteset$
\item $\ns{A} \NSsubseteq \inv{\ns{f}}\left( \ns{f}\left(\ns{A}\right) \right)$
and the identity holds if $\ns{f}$ is injective
\item $\ns{f}\left( \inv{\ns{f}}\left(\ns{B}\right) \right) \NSsubseteq \ns{B}$
and the identity holds if $\ns{f}$ is surjective
\item $\inv{\ns{f}}\left( \ns{B}^\NScompl \, \right)
\NSeq \inv{\ns{f}}\left( \ns{B} \right)\!^\NScompl$
\end{enumi}
\end{proposition}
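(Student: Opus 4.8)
The plan is to verify each of the six items by unwinding Definitions~\ref{def:neutrosophicimage} and~\ref{def:neutrosophicinverseimage} pointwise and comparing the three component functions according to Definition~\ref{def:neutrosophicsubset}. Items (1)--(3) are the easiest: they follow by direct substitution of the constant functions $\und{0}$ and $\und{1}$ into the defining formulas. For (1), fixing $v \in \V$ and distinguishing whether $\fibre{f}{v}$ is empty or not, the extrema of the constant functions over the fibre return exactly $0$, $0$ and $1$, which are the components of $\NSemptyset[\V]$; the empty-fibre convention returns the same triple, so the two cases merge. Items (2) and (3) are even more immediate, since the inverse image is defined by honest precomposition with $f$: for every $u \in \U$ one has $\DM[f(u)]{B} = 0$ (resp.\ $1$) when $\ns{B}$ is $\NSemptyset[\V]$ (resp.\ $\NSabsoluteset[\V]$), and similarly for the other two components, giving $\NSemptyset[\U]$ (resp.\ $\NSabsoluteset$) on the nose.

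For item (4) the key observation is that every $u \in \U$ lies in the fibre $\fibre{f}{f(u)}$, so this fibre is nonempty and the image components of $\ns{A}$ evaluated at $f(u)$ are extrema taken over a set that contains $u$. Composing with the inverse image, the membership and indeterminacy components of $\inv{\ns{f}}\left(\ns{f}\left(\ns{A}\right)\right)$ at $u$ compare with $\DM{A}$ and $\DI{A}$ in one direction, while the nonmembership component compares in the opposite direction, which is exactly the pattern of inequalities required by Definition~\ref{def:neutrosophicsubset} for $\ns{A} \NSsubseteq \inv{\ns{f}}\left(\ns{f}\left(\ns{A}\right)\right)$. If $\ns{f}$ is injective then $f$ is injective and the fibre $\fibre{f}{f(u)}$ reduces to the singleton $\{u\}$, every extremum is attained at $u$ alone, and the three inequalities become equalities, yielding $\NSeq$.

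Item (5) is dual and is where the case split does the real work. For $v \in \V$ with $\fibre{f}{v} \neq \emptyset$, every $u$ in the fibre satisfies $f(u) = v$, so each inverse-image component of $\ns{B}$ is \emph{constant} on the fibre, equal to $\DM[v]{B}$ (resp.\ $\DI[v]{B}$, $\DNM[v]{B}$); taking the extremum of a constant returns that constant, so the components of $\ns{f}\left(\inv{\ns{f}}\left(\ns{B}\right)\right)$ agree with those of $\ns{B}$ at such $v$. For $v$ outside the range of $f$ the fibre is empty and the convention assigns $0$, $0$ and $1$, which are dominated by $\DM[v]{B}$, $\DI[v]{B}$ and $\DNM[v]{B}$ in the sense of $\NSsubseteq$; hence $\ns{f}\left(\inv{\ns{f}}\left(\ns{B}\right)\right) \NSsubseteq \ns{B}$ always holds. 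Surjectivity of $\ns{f}$, equivalently of $f$, is precisely what eliminates the second case, so that every $v$ falls under the first and the containment upgrades to the equality $\NSeq$.

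Finally, item (6) is a pointwise identity. By Definition~\ref{def:neutrosophiccomplement} the complement replaces $(\M{B},\I{B},\NM{B})$ by $(\NM{B},\und{1}-\I{B},\M{B})$, and by Definition~\ref{def:neutrosophicinverseimage} the inverse image precomposes each component with $f$. Since precomposition with $f$ commutes with swapping the first and third components and with the pointwise operation $t \mapsto 1-t$ applied to the middle one, computing $\inv{\ns{f}}(\ns{B}^\NScompl)$ and $\inv{\ns{f}}(\ns{B})^\NScompl$ yields the same triple at every $u \in \U$, which gives the claimed equality. I expect the only genuine subtlety across the whole proof to be the empty-fibre bookkeeping in (4) and (5): one must keep the membership and indeterminacy inequalities and the reversed nonmembership inequality straight, and recognise that injectivity and surjectivity are exactly the hypotheses needed to turn the two containments into equalities.
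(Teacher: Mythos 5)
First, a point of comparison: the paper itself offers no proof of this proposition --- it is quoted verbatim from \cite{salama2014} --- so your pointwise verification has to be judged directly against Definitions \ref{def:neutrosophicimage}, \ref{def:neutrosophicinverseimage}, \ref{def:neutrosophicsubset} and \ref{def:neutrosophiccomplement}. On that basis, your treatments of items (1)--(3), (5) and (6) are correct: the constant-function substitutions, the ``constant on the fibre'' collapse in (5) together with the empty-fibre case being dominated in the sense of $\NSsubseteq$, and the observation that precomposition with $f$ commutes with the component swap and with $t \mapsto 1-t$ in (6), are exactly the right computations.

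Item (4), however, contains a genuine gap, and it is precisely at the step you wave through with ``compare \ldots in exactly the pattern required.'' Under Definition \ref{def:neutrosophicimage} as printed, the membership component of the image is an \emph{infimum} over the fibre, not a supremum. Since $u \in \fibre{f}{f(u)}$, one gets $\inv{f}\left(f\left(\M{A}\right)\right)(u) = \inf\left\{ \DM[u']{A} : u' \in \fibre{f}{f(u)} \right\} \le \DM{A}$, and dually the supremum defining the nonmembership component gives $\inv{f}\left(f\left(\NM{A}\right)\right)(u) \ge \DNM{A}$. That is the inequality pattern of $\inv{\ns{f}}\left( \ns{f}\left(\ns{A}\right) \right) \NSsubseteq \ns{A}$, i.e.\ the \emph{reverse} of the stated containment. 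Concretely, take $\U = \left\{ u_1, u_2 \right\}$, $\V = \left\{ v \right\}$, $f$ constant, $\DM[u_1]{A} = 0.9$ and $\DM[u_2]{A} = 0.1$: then the composite has membership $0.1 < 0.9$ at $u_1$, so $\ns{A} \NSnotsubseteq \inv{\ns{f}}\left( \ns{f}\left(\ns{A}\right) \right)$. The stated item (4) is true only for the standard Zadeh-type image (supremum for membership and indeterminacy, infimum for nonmembership), which is presumably what the cited source intends and which would leave your arguments for (1), (5) and (6) intact; but with the paper's written definition your proof of (4) asserts inequalities in the wrong direction, and a correct write-up must either flag this inconsistency or prove the reversed containment. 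Only the injectivity half of (4) --- singleton fibres forcing all extrema to be attained at $u$, hence equality --- survives under either reading.
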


\begin{proposition}{\rm\cite{salama2014}}
\label{pro:monotonicfneutrosophicimagesandinverseimages}
Let $f: \U \to \V$ be a mapping between two universe sets $\U$ and $\V$,
$\ns{A}_i=\NS{A_i}$ (with $i=1,2$) be \nameSVNS[s] over $\U$
and $\ns{B}_i=\NS[\V]{B_i}$ (with $i=1,2$) be \nameSVNS[s] over $\V$, then the following hold:
\begin{enumi}
\item if $\ns{A}_1 \NSsubseteq \ns{A}_2$ then
$\ns{f}\left(\ns{A}_1\right) \NSsubseteq \ns{f}\left(\ns{A}_2\right)$
\item if $\ns{B}_1 \NSsubseteq \ns{B}_2$ then
$\inv{\ns{f}}\left(\ns{B}_1\right) \NSsubseteq \inv{\ns{f}}\left(\ns{B}_2\right)$
\end{enumi}
\end{proposition}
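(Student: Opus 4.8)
The plan is to verify each of the two monotonicity claims directly from the defining formulas, reducing every neutrosophic inequality to the corresponding pointwise inequalities between the three component functions, and then applying the monotonicity of $\inf$ and $\sup$.

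First I would unwind the definition of $\NSsubseteq$ (Definition~\ref{def:neutrosophicsubset}): to prove $\ns{f}\left(\ns{A}_1\right) \NSsubseteq \ns{f}\left(\ns{A}_2\right)$ it suffices to show, for every $v \in \V$, the three pointwise relations $f\left(\M{A_1}\right)(v) \le f\left(\M{A_2}\right)(v)$, $f\left(\I{A_1}\right)(v) \le f\left(\I{A_2}\right)(v)$ and $f\left(\NM{A_1}\right)(v) \ge f\left(\NM{A_2}\right)(v)$. The hypothesis $\ns{A}_1 \NSsubseteq \ns{A}_2$ gives exactly $\DM{A_1} \le \DM{A_2}$, $\DI{A_1} \le \DI{A_2}$ and $\DNM{A_1} \ge \DNM{A_2}$ for every $u \in \U$.

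For part (1) I would fix $v \in \V$ and split into the two cases of Definition~\ref{def:neutrosophicimage}. When $\fibre{f}{v} = \emptyset$ all three images take their default values ($0$, $0$, $1$ respectively) for both $\ns{A}_1$ and $\ns{A}_2$, so the required (non-strict) inequalities hold trivially. When $\fibre{f}{v} \neq \emptyset$, the membership and indeterminacy components are infima over the fibre: since $\DM{A_1} \le \DM{A_2}$ holds termwise on $\fibre{f}{v}$, taking $\inf_{u \in \fibre{f}{v}}$ of both sides preserves the inequality, giving $f\left(\M{A_1}\right)(v) \le f\left(\M{A_2}\right)(v)$, and likewise for the indeterminacy function. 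For the nonmembership component, which is a supremum, the reversed termwise inequality $\DNM{A_1} \ge \DNM{A_2}$ yields $\sup_{u \in \fibre{f}{v}} \DNM{A_1} \ge \sup_{u \in \fibre{f}{v}} \DNM{A_2}$, i.e.\ $f\left(\NM{A_1}\right)(v) \ge f\left(\NM{A_2}\right)(v)$, exactly as needed. This establishes $\ns{f}\left(\ns{A}_1\right) \NSsubseteq \ns{f}\left(\ns{A}_2\right)$.

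Part (2) is even simpler because the inverse image (Definition~\ref{def:neutrosophicinverseimage}) involves no $\inf$ or $\sup$: for each $u \in \U$ the three components are obtained by evaluation at $f(u)$. From $\ns{B}_1 \NSsubseteq \ns{B}_2$, applied at the point $f(u) \in \V$, I would read off $\DM[f(u)]{B_1} \le \DM[f(u)]{B_2}$, $\DI[f(u)]{B_1} \le \DI[f(u)]{B_2}$ and $\DNM[f(u)]{B_1} \ge \DNM[f(u)]{B_2}$, which are precisely the three defining inequalities for $\inv{\ns{f}}\left(\ns{B}_1\right) \NSsubseteq \inv{\ns{f}}\left(\ns{B}_2\right)$. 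I expect no real obstacle here; the only point demanding a moment of care is the direction reversal for the nonmembership (indeterminacy of the $\ge$ versus $\le$ convention) together with the fact that $\sup$ is monotone while the nonmembership inequality runs the opposite way, so that the two reversals are consistent and the $\NSsubseteq$ relation is genuinely preserved.
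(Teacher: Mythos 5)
Your proof is correct. The paper itself gives no proof of this proposition --- it is quoted as a known preliminary from \cite{salama2014} --- and your direct pointwise verification is exactly the standard argument one would supply: reduce $\NSsubseteq$ to the three componentwise inequalities, use monotonicity of $\inf$ and $\sup$ over the fibre $\fibre{f}{v}$ (correctly noting that the paper's Definition~\ref{def:neutrosophicimage} uses $\inf$ for the membership and indeterminacy components and $\sup$ for the nonmembership component, with the reversed inequality for $\omega$ matching the reversal built into $\NSsubseteq$), handle the empty-fibre case where both sides take the same default values, and observe that the inverse image is just composition with $f$, making part (2) immediate.
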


\begin{proposition}{\rm\cite{salama2014}}
\label{pro:propertiesofneutrosophicimagesandinverseimagesofgeneralizedunionandintersections}
Let $f: \U \to \V$ be a mapping between two universe sets $\U$ and $\V$,
$\left\{ \ns{A}_i \right\}_{i\in I}$ be a family of \nameSVNS[s]
$\ns{A}_i=\NS{A_i}$ over $\U$
and $\left\{ \ns{B}_i \right\}_{i\in I}$ be a family of \nameSVNS[s]
$\ns{B}_i=\NS[\V]{B_i}$ over $\V$ ,
then the following hold:
\begin{enumi}
\item $\ns{f} \left( \NSCup_{i \in I} \ns{A}_i \right) \NSeq \NSCup_{i \in I} \ns{f} \left( \ns{A}_i \right)$
\item $\ns{f} \left( \NSCap_{i \in I} \ns{A}_i \right) \NSsubseteq
       \NSCap_{i \in I} \ns{f} \left( \ns{A}_i \right)$
and the identity holds if $\ns{f}$ is injective
\item $\inv{\ns{f}} \left( \NSCup_{i \in I} \ns{A}_i \right) \NSeq \NSCup_{i \in I} \inv{\ns{f}} \left( \ns{A}_i \right)$
\item $\inv{\ns{f}} \left( \NSCap_{i \in I} \ns{A}_i \right) \NSeq \NSCap_{i \in I} \inv{\ns{f}} \left( \ns{A}_i \right)$
\end{enumi}
\end{proposition}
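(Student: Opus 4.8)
The plan is to prove all four identities by unwinding the definitions into the three component functions $\M{}$, $\I{}$, $\NM{}$ and comparing the two sides, handling the inverse-image identities (parts (3) and (4)) separately from the image identities (parts (1) and (2)), since the two operators behave very differently. Recall that for a union the components combine as $\left(\bigvee,\bigvee,\bigwedge\right)$ and for an intersection as $\left(\bigwedge,\bigwedge,\bigvee\right)$, so in each case it suffices to check the three scalar-valued equalities that result from evaluating both sides at a point.

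I would start with the inverse-image identities, which are the routine ones. Since $\inv{\ns{f}}$ is defined by plain pre-composition, for every $u\in\U$ one has $\inv{f}\left(\M{B_i}\right)(u)=\DM[f(u)]{B_i}$, and likewise for $\I{}$ and $\NM{}$. Evaluating the union $\NSCup_{i\in I}\ns{B}_i$ at the single point $f(u)$ gives $\left(\bigvee_i\M{B_i}\right)(f(u))=\bigvee_i\M{B_i}(f(u))$, because the supremum of a family of mappings is computed pointwise; the same holds for $\I{}$ (supremum) and for $\NM{}$ (infimum, via $\bigwedge$). Hence both sides of (3) agree on every component at every $u$, and the dual computation with $\bigwedge$ and $\bigvee$ interchanged yields (4). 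No hypothesis on $f$ is needed here.

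The image identities carry the real content. Fix $v\in\V$. If $\fibre{f}{v}=\emptyset$, the image of any \nameSVNS takes the default values at $v$, and a direct check shows the right-hand sides take the same defaults (using $\bigvee_i 0=0$ and $\bigwedge_i 1=1$ over the nonempty index set $I$); so the substantive case is $\fibre{f}{v}\neq\emptyset$, where each comparison reduces, component by component, to interchanging an extremum over the fibre $\fibre{f}{v}$ with an extremum over $I$. For the union (part (1)) the required interchange is a genuine identity—one is commuting two suprema (on $\M{}$ and $\I{}$) or two infima (on $\NM{}$)—so equality holds for arbitrary $f$. For the intersection (part (2)) the fibre-extremum and the index-extremum are of opposite type, and only the one-sided bound $\sup\inf\le\inf\sup$ survives; tracking its direction on $\M{}$ and $\I{}$ (forcing the membership and indeterminacy of $\ns{f}\left(\NSCap_{i\in I}\ns{A}_i\right)$ to be $\le$ those of $\NSCap_{i\in I}\ns{f}\left(\ns{A}_i\right)$) and on $\NM{}$ (forcing the reverse inequality, as $\NSsubseteq$ demands) yields exactly the containment $\ns{f}\left(\NSCap_{i\in I}\ns{A}_i\right)\NSsubseteq\NSCap_{i\in I}\ns{f}\left(\ns{A}_i\right)$.

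The main obstacle is therefore concentrated entirely in the second half of part (2): upgrading this containment to an equality under injectivity. The key observation is that when $\ns{f}$ is injective every nonempty fibre $\fibre{f}{v}$ is a singleton, so the fibre-extremum is trivial—it merely evaluates the relevant component at the unique preimage of $v$—and the offending $\sup$–$\inf$ interchange collapses, making both sides coincide. I would record this as the sole place the injectivity hypothesis enters, and note in closing that parts (1), (3) and (4) hold for an arbitrary $f$.
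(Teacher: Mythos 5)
The paper itself offers no proof of this proposition --- it is imported verbatim from \cite{salama2014} as a preliminary --- so your proposal can only be judged on its own terms. Your overall architecture is the standard and correct one: componentwise pointwise verification, plain pre-composition disposing of parts (3) and (4) with no hypothesis on $f$, the empty-fibre defaults checked separately against $\bigvee_i 0=0$ and $\bigwedge_i 1=1$, and injectivity entering only through nonempty fibres being singletons. Parts (3) and (4) are complete and correct exactly as you describe them.

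The gap is in parts (1) and (2): your key claims do not match the definition of neutrosophic image actually stated in the paper. Definition \ref{def:neutrosophicimage} takes $f\left(\M{A}\right)(v)=\inf_{u\in \fibre{f}{v}} \DM{A}$ and likewise an \emph{infimum} over the fibre for $\I{A}$, with a supremum only on $\NM{A}$; Example \ref{ex:neutrosophicimagesandinverseimages} confirms this is not a typo, since there $f\left(\M{A}\right)(\beta)=0.4=\min\left\{0.5,0.4\right\}$ rather than $0.5$. Under that convention the interchange required in part (1) for the membership component is $\inf_{u}\sup_{i}$ against $\sup_{i}\inf_{u}$ --- extrema of \emph{opposite} type, not ``two suprema'' as you assert --- and equality genuinely fails: take $\U=\left\{u_1,u_2\right\}$, $\V=\left\{v\right\}$, $f$ constant, $I=\left\{1,2\right\}$, with $\M{A_1}(u_1)=1$, $\M{A_1}(u_2)=0$, $\M{A_2}(u_1)=0$, $\M{A_2}(u_2)=1$; then $\ns{f}\left(\NSCup_{i}\ns{A}_i\right)$ has membership $1$ at $v$ while $\NSCup_{i}\ns{f}\left(\ns{A}_i\right)$ has membership $0$. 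Dually, under the paper's definition it is part (2) that becomes an unconditional equality (infimum commutes with infimum, supremum with supremum), which would make the injectivity clause vacuous. In short, you have silently substituted the usual sup-based image of fuzzy-set theory; under that convention your proof of (1)--(2) is correct and the proposition reads exactly as quoted, but as a proof relative to the paper's own Definition \ref{def:neutrosophicimage} the argument for (1) collapses at the interchange step, and the honest conclusion is that the quoted proposition and the stated definition are mutually inconsistent. You should either declare explicitly that you work with the sup-based image (as the cited source evidently intends) or flag the discrepancy, rather than present the interchange in (1) as an identity that the paper's definition does not support.
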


\section{Single Valued Neutrosophic Filters}

In \cite{salama2013}, A.A. Salama and H. Alagamy introduced the notion of filter on neutrosophic set.
That study, unfortunately, is rather incomplete and certainly not exhaustive because it does not cover
the neutrosophic equivalent of many fundamental notions and properties such as
those concerning the lower or upper bound of two filter bases,
the proof of the ultrafilter's existence, etc.
For this reason, together with the fact that -- as already mentioned -- the class
of the single valued neutrosophic sets lends itself with greater ductility to resolution
of real life problems and applications,
we give here a comprehensive presentation about theory of filters on \nameSVNS[s]
which includes new notions and properties that are not present in that article.

\begin{definition}
\label{def:singlevaluedneutrosophicfiltersubbase}
Let $\cA \subseteq \SSVNS$ be a nonempty family of \nameSVNS[s] over the universe set $\U$,
we say that $\cF$ is a \df{single valued neutrosophic filter subbase} (\df{SVN-filter subbase} for short,
or simply a filter subbase) on $\SSVNS$
if it has the finite intersection property,
i.e. if for every $\ns{A_1}, \ldots \ns{A_n} \in \cA$
(with $\ns{A}_i=\NS{A_i}$ for every $i=1,\ldots n$, and $n \in \NN^*$),
it results $\NSCap_{i=1}^n \ns{A}_i \NSneq \NSemptyset$.
\end{definition}

\begin{definition}
\label{def:singlevaluedneutrosophicfilterbase}
A nonempty family $\cF \subseteq \SSVNS$ of \nameSVNS[s] over the universe set $\U$
is \df{single valued neutrosophic filter base} (\df{SVN-filter base} for short,
or simply a filter base) on $\SSVNS$
if the following two conditions hold:
\begin{enumr}
\item $\NSemptyset \notin \cF$
\item for every $\ns{F}, \ns{G} \in \cF$ there exists some $\ns{H} \in \cF$
such that $\ns{H} \NSsubseteq \ns{F} \NScap \ns{G}$.
\end{enumr}
\end{definition}

\begin{definition}
\label{def:singlevaluedneutrosophicfilter}
A nonempty family $\cF \subseteq \SSVNS$ of \nameSVNS[s] over the universe set $\U$
is \df{single valued neutrosophic filter} (\df{SVN-filter} for short or simply a filter) on $\SSVNS$
if:
\begin{enumr}
\item $\cF$ is a SVN-filter base, and
\item for every $\nNS{F} \in \cF$ and every $\nNS{A} \in \SSVNS$ such that $\ns{F} \NSsubseteq \ns{A}$
it follows that $\ns{A} \in \cF$.
\end{enumr}
\end{definition}

An equivalent definition of SVN-filter is given by the following proposition.

\begin{proposition}
\label{pro:characterizationofneutrosophicfilter}
A nonempty family $\cF \subseteq \SSVNS$ of \nameSVNS[s] over the universe set $\U$
is a SVN-filter on $\SSVNS$ if and only if the following three conditions hold:
\begin{enumr}
\item $\NSemptyset \notin \cF$
\item for every $\ns{F}, \ns{G} \in \cF$, it follows that $\ns{F} \NScap \ns{G} \in \cF$
\item for every $\ns{F} \in \cF$ and every $\ns{A} \in \SSVNS$ such that $\ns{F} \NSsubseteq \ns{A}$
we have that $\ns{A} \in \cF$.
\end{enumr}
\end{proposition}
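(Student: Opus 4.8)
The plan is to prove the two implications separately, relying only on Definition~\ref{def:singlevaluedneutrosophicfilter} (SVN-filter), Definition~\ref{def:singlevaluedneutrosophicfilterbase} (SVN-filter base), and the reflexivity of the relation $\NSsubseteq$ noted just after Definition~\ref{def:neutrosophicsubset}. The key observation that drives both directions is that the existential base condition ``there exists $\ns{H} \in \cF$ with $\ns{H} \NSsubseteq \ns{F} \NScap \ns{G}$'' collapses, in the presence of upward closure, into the stronger closure statement $\ns{F} \NScap \ns{G} \in \cF$, and conversely.

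First I would prove the forward implication: assuming $\cF$ is a SVN-filter, I verify conditions (i)--(iii). Condition (i) holds because, being a SVN-filter, $\cF$ is in particular a SVN-filter base, so $\NSemptyset \notin \cF$; condition (iii) is literally condition (ii) of Definition~\ref{def:singlevaluedneutrosophicfilter}. For condition (ii), I take $\ns{F}, \ns{G} \in \cF$ and use the SVN-filter base property to obtain some $\ns{H} \in \cF$ with $\ns{H} \NSsubseteq \ns{F} \NScap \ns{G}$. Since $\ns{F} \NScap \ns{G} \in \SSVNS$ and $\ns{H} \in \cF$ with $\ns{H} \NSsubseteq \ns{F} \NScap \ns{G}$, the upward closure (iii) forces $\ns{F} \NScap \ns{G} \in \cF$, as required.

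Conversely, assuming (i)--(iii), I would show that $\cF$ is a SVN-filter, i.e.\ that it satisfies both clauses of Definition~\ref{def:singlevaluedneutrosophicfilter}. Clause (ii) (upward closure) is precisely condition (iii). For clause (i), I must check that $\cF$ is a SVN-filter base: condition (i) gives $\NSemptyset \notin \cF$, and for any $\ns{F}, \ns{G} \in \cF$ condition (ii) yields $\ns{F} \NScap \ns{G} \in \cF$; setting $\ns{H} = \ns{F} \NScap \ns{G}$ and invoking reflexivity of $\NSsubseteq$ gives $\ns{H} \NSsubseteq \ns{F} \NScap \ns{G}$ with $\ns{H} \in \cF$, which is exactly the base condition. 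I do not expect any genuine obstacle here; the only point requiring care is to distinguish the weaker existential base condition from the stronger intersection-closure condition (ii), and to note that passing from the former to the latter is exactly where upward closure is used.
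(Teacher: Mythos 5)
Your proposal is correct and follows essentially the same route as the paper: the forward direction upgrades the existential base condition to intersection-closure via upward closure, and the converse observes that closure under $\NScap$ (together with reflexivity of $\NSsubseteq$) subsumes the base condition. Your explicit mention of taking $\ns{H} \NSeq \ns{F} \NScap \ns{G}$ and invoking reflexivity merely spells out a detail the paper leaves implicit.
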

\begin{proof}
Let $\cF$ be a SVN-filter over $\U$.
Evidently, conditions (i) and (iii) are satisfied.
Let $\ns{F}, \ns{G} \in \cF$. By condition (ii) of the definition of SVN-filter base,
there exists some $\ns{H} \in \cF$ such that $\ns{H} \NSsubseteq \ns{F} \NScap \ns{G}$
and so, by the peculiar condition of SVN-filter, it also follows that $\ns{F} \NScap \ns{G} \in \cF$.

Conversely, if $\cF$ is a nonempty family of \nameSVNS[s] over the universe set $\U$
satisfying conditions (i), (ii) and (iii), it is enough to note that (ii)
implies condition (ii) of Definition \ref{def:singlevaluedneutrosophicfilterbase} and so that
$\cF$ is SVN-filter over $\U$.
\end{proof}

It is a simple routine to show that the condition (ii) of the
Proposition \ref{pro:characterizationofneutrosophicfilter}
can be generalized to any finite neutrosophic intersection
as it is pointed out in the following corollary.

\begin{corollary}
\label{cor:characterizationofneutrosophicfilterwithfiniteintersection}
A nonempty family $\cF \subseteq \SSVNS$ of \nameSVNS[s] over the universe set $\U$
is a SVN-filter on $\SSVNS$ if and only if the following three conditions hold:
\begin{enumr}
\item $\NSemptyset \notin \cF$
\item for every $\ns{F}_1, \ldots \ns{F}_n \in \cF$, it follows that $\NSCap_{i \in I} \ns{F}_i \in \cF$
\item for every $\ns{F} \in \cF$ and every $\ns{A} \in \SSVNS$ such that $\ns{F} \NSsubseteq \ns{A}$
we have that $\ns{A} \in \cF$.
\end{enumr}
\end{corollary}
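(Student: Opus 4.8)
The plan is to derive this corollary directly from Proposition \ref{pro:characterizationofneutrosophicfilter}, exploiting the fact that its conditions (i) and (iii) coincide verbatim with those of the present statement. Consequently the whole task reduces to a single point: showing that, in the presence of (i) and (iii), the finite-intersection closure required in condition (ii) of the corollary is equivalent to the binary-intersection closure appearing in condition (ii) of Proposition \ref{pro:characterizationofneutrosophicfilter}. Once this equivalence is established, both characterizations describe exactly the same families, and the corollary is immediate.

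First I would dispatch the easy implication. Suppose $\cF$ satisfies the three conditions of the corollary. Specializing the finite-intersection closure to the case $n=2$ yields at once that $\ns{F} \NScap \ns{G} \in \cF$ for all $\ns{F}, \ns{G} \in \cF$, which is precisely condition (ii) of Proposition \ref{pro:characterizationofneutrosophicfilter}. Since conditions (i) and (iii) already match, all three hypotheses of that proposition hold, and therefore $\cF$ is a SVN-filter.

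For the converse I would assume $\cF$ is a SVN-filter and establish the finite-intersection closure by induction on $n$. By Proposition \ref{pro:characterizationofneutrosophicfilter}, $\cF$ satisfies (i), (iii) and binary closure. The base case $n=1$ is trivial and $n=2$ is exactly the binary closure just recalled. For the inductive step, given $\ns{F}_1, \ldots, \ns{F}_{n+1} \in \cF$, I would regroup the iterated intersection as
$$\NSCap_{i=1}^{n+1} \ns{F}_i \NSeq \left( \NSCap_{i=1}^{n} \ns{F}_i \right) \NScap \ns{F}_{n+1},$$
which is legitimate by the associativity of neutrosophic intersection (Proposition \ref{pro:associativeneutrosophicsets}). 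The inductive hypothesis gives $\NSCap_{i=1}^{n} \ns{F}_i \in \cF$, and applying binary closure to this SVN-set together with $\ns{F}_{n+1}$ delivers the desired membership, completing the induction.

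No genuine obstacle arises: the corollary is a routine bootstrapping of the binary case. The only point requiring a touch of care is that Proposition \ref{pro:associativeneutrosophicsets} furnishes the regrouping above only up to the neutrosophic equality $\NSeq$, not literal equality of the defining functions; however this causes no harm, since $\ns{X} \NSeq \ns{Y}$ entails $\ns{X} \NSsubseteq \ns{Y}$, so condition (iii) (upward closure) guarantees that $\cF$ is closed under $\NSeq$. Hence membership of the regrouped intersection transfers to $\NSCap_{i=1}^{n+1} \ns{F}_i$, and the argument goes through.
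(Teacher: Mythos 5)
Your proof is correct and is exactly the argument the paper intends: the paper states this corollary without proof, remarking only that it is ``a simple routine'' to generalize condition (ii) of Proposition \ref{pro:characterizationofneutrosophicfilter} to finite intersections, and your induction on $n$ via associativity of $\NScap$ (with the $n=2$ specialization for the converse direction) is that routine argument spelled out. Your extra observation that $\NSeq$-equality is harmless because condition (iii) makes $\cF$ closed under neutrosophic equality is a careful touch the paper leaves implicit.
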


\begin{remark}
\label{rem:relationshipbetweenfilterandfilterbase}
Evidently every SVN-filter is a SVN-filter base and every SVN-filter base is a SVN-filter subbase.
It is also trivial to note that every SVN-filter on $\SSVNS$ contains $\U$.
\end{remark}


\begin{example}
\label{ex:singlevaluedneutrosophicfilterbasenotfilter}
Let $\U = \left\{ a,b,c \right\}$ be a finite universe set and
let $\cF = \left\{ \ns{F}, \ns{G}, \ns{H}, \NSabsoluteset \right\} \subseteq \SSVNS$
be a set of the \nameSVNS[s] $\nNS{F}, \nNS{G}, \nNS{H}$ and
$\NSabsoluteset = \NSbase{\und{1}}{\und{1}}{\und{0}}$
respectively defined by the following tabular representations:
\vspace{-3mm}
\begin{center}
\begin{tabular}{cc}
\begin{nstabular}{\U}{\ns{F}}{\M{F}}{\I{F}}{\NM{F}}
$a$ & 0.4 & 0.3 & 0.2
\\ \hline
$b$ & 0.8 & 0.2 & 0.1
\\ \hline
$c$ & 0.6 & 0.5 & 0.4
\end{nstabular}
\hspace{12mm} &
\begin{nstabular}{\U}{\ns{G}}{\M{G}}{\I{G}}{\NM{G}}
$a$ & 0.7 & 0.1 & 0.3
\\ \hline
$b$ & 0.9 & 0.2 & 0.2
\\ \hline
$c$ & 0.2 & 0.6 & 0.5
\end{nstabular}
\\[12mm]
\begin{nstabular}{\U}{\ns{H}}{\M{H}}{\I{H}}{\NM{H}}
$a$ & 0 & 0.4 & 0.8
\\ \hline
$b$ & 0.5 & 0.3 & 0.6
\\ \hline
$c$ & 0.1 & 0.8 & 0.5
\end{nstabular}
\hspace{12mm} &
\begin{nstabular}{\U}{\NSabsoluteset}{\M{\U}}{\I{\U}}{\NM{\U}}
$a$ & 1 & 1 & 0
\\ \hline
$b$ & 1 & 1 & 0
\\ \hline
$c$ & 1 & 1 & 0
\end{nstabular}
\end{tabular}
\end{center}
It is easy to check that $\cF$ is a SVN-filter base on $\SSVNS$.
However, by using Proposition \ref{pro:characterizationofneutrosophicfilter},
we have that $\cF$ is not a SVN-filter since, for example, the neutrosophic intersection
$\ns{W} \NSeq \ns{F} \NScap \ns{G}$ of $\ns{F}, \ns{G} \in \cF$
computated as shown in the following tabular representation:
\vspace{-3mm}
\begin{center}
\begin{nstabular}{\U}{\ns{W}}{\M{W}}{\I{W}}{\NM{W}}
$a$ & 0.4 & 0.3 & 0.3
\\ \hline
$b$ & 0.8 & 0.2 & 0.2
\\ \hline
$c$ & 0.2 & 0.6 & 0.5
\end{nstabular}
\end{center}
is a \nameSVNS over $\U$ which does not belong to the family $\cF$.
\end{example}


\begin{notation}
\label{not:setofsinglevaluedneutrosophicfilters}
The set of all the single valued neutrosophic filters over the universe set $\U$
will be denoted by $\SVNF$.
\end{notation}

\begin{definition}
\label{def:comparisonofneutrosophicfilterbase}
Let $\cF$ and $\cG$ be two SVN-filter bases on $\SSVNS$,
we say that $\cG$ is \df{finer} than $\cF$ if $\cF \subseteq \cG$.
We also say that $\cF$ is \df{coarser} than $\cG$.
\end{definition}

Let us note that the set $\SVNF$ equipped with the finess 
relation $\subseteq$ 
forms a poset although it is not a loset.


\begin{proposition}
\label{pro:neutrosophicfilterbasefromsubbase}
Let $\cS$ be a SVN-filter subbase on $\SSVNS$ and let
$$\cS^* = \left\{ \NSCap_{i=1}^n \ns{A_i} : \, \nNS{A_i} \in \cS, \, n \in \NN^* \right\}$$
be the set of all finite neutrosophic intersections of $\cS$.
Then $\cS^*$ is a SVN-filter base on $\SSVNS$
containing $\cS$, i.e. $\cS \subseteq \cS^*$.
\end{proposition}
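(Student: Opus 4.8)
The plan is to verify the two defining conditions of an SVN-filter base (Definition \ref{def:singlevaluedneutrosophicfilterbase}) for the family $\cS^*$, and then to check the containment $\cS \subseteq \cS^*$. The containment is the easiest part: for any $\ns{A} \in \cS$, taking $n=1$ in the definition of $\cS^*$ gives $\ns{A} \NSeq \NSCap_{i=1}^{1} \ns{A} \in \cS^*$, so $\cS \subseteq \cS^*$.

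For condition (i), namely $\NSemptyset \notin \cS^*$, I would argue directly from the hypothesis that $\cS$ is an SVN-filter subbase. By Definition \ref{def:singlevaluedneutrosophicfiltersubbase}, $\cS$ has the finite intersection property, so for every choice of $\ns{A}_1, \ldots, \ns{A}_n \in \cS$ we have $\NSCap_{i=1}^{n} \ns{A}_i \NSneq \NSemptyset$. Since every element of $\cS^*$ is by construction a finite neutrosophic intersection of members of $\cS$, no element of $\cS^*$ equals $\NSemptyset$, which is exactly condition (i).

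For condition (ii), let $\ns{F}, \ns{G} \in \cS^*$. By definition of $\cS^*$ there exist $\ns{A}_1, \ldots, \ns{A}_m \in \cS$ and $\ns{B}_1, \ldots, \ns{B}_k \in \cS$ with $\ns{F} \NSeq \NSCap_{i=1}^{m} \ns{A}_i$ and $\ns{G} \NSeq \NSCap_{j=1}^{k} \ns{B}_j$. Setting $\ns{H} \NSeq \ns{F} \NScap \ns{G}$, the associativity of neutrosophic intersection (Proposition \ref{pro:associativeneutrosophicsets}) lets me rewrite $\ns{H}$ as the single finite intersection $\NSCap$ of all $m+k$ members $\ns{A}_1, \ldots, \ns{A}_m, \ns{B}_1, \ldots, \ns{B}_k$ of $\cS$, so that $\ns{H} \in \cS^*$. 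Moreover $\ns{H} \NSsubseteq \ns{F} \NScap \ns{G}$ holds trivially since in fact $\ns{H} \NSeq \ns{F} \NScap \ns{G}$, giving condition (ii) with this explicit witness.

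The only point requiring a little care, and the natural candidate for the \textbf{main obstacle}, is the bookkeeping in condition (ii): one must be sure that concatenating the two index families indeed produces a finite intersection of elements of $\cS$ (rather than of $\cS^*$), and that associativity and commutativity (Propositions \ref{pro:associativeneutrosophicsets} and \ref{pro:commutativeneutrosophicsets}) legitimately flatten the nested intersection $\left( \NSCap_{i=1}^{m} \ns{A}_i \right) \NScap \left( \NSCap_{j=1}^{k} \ns{B}_j \right)$ into the combined intersection over $m+k$ terms. This is entirely routine once stated, so the proof is short; the substantive content is recognizing that the finite intersection property of the subbase is precisely what forbids $\NSemptyset$ from appearing in $\cS^*$.
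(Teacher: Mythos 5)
Your proposal is correct and follows essentially the same route as the paper: both verify condition (i) from the finite intersection property of $\cS$, both verify condition (ii) by observing that $\ns{F} \NScap \ns{G}$ is itself a finite intersection of members of $\cS$ (hence lies in $\cS^*$ and serves as the witness $\ns{H}$), and both conclude $\cS \subseteq \cS^*$. The only cosmetic difference is in that last step, where you take $n=1$ in the definition of $\cS^*$ while the paper invokes the idempotence $\ns{A} \NScap \ns{A} \NSeq \ns{A}$ from Proposition \ref{pro:propertiesunionandintersection}\,(4); your version is, if anything, slightly more direct.
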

\begin{proof}
Let $\cS$ be a SVN-filter subbase over $\U$.
Since, by Definition \ref{def:singlevaluedneutrosophicfiltersubbase}, $\cS$ has the finite intersection property,
it is evident that $\cS^*$ satisfies the condition (i) of Definition \ref{def:singlevaluedneutrosophicfilterbase}.
Furthermore, for every $\ns{A}, \ns{B} \in \cS^*$,
there exist some $m, n \in \NN^*$, $\nNS{A_i} \in \cS$ (with $i=1,\ldots m$)
and $\nNS{B_j} \in \cS$ (with $j=1,\ldots n$) such that
$\ns{A} = \NSCap_{i=1}^m \ns{A_i}$ and $\ns{B} = \NSCap_{j=1}^n \ns{B_i}$
and so, by definition of $\cS^*$, it is clear that $\ns{A} \NScap \ns{B} \in \cS^*$.
Thus, $\cS^*$ also satisfies condition (ii) of Definition \ref{def:singlevaluedneutrosophicfilterbase}
and it is a SVN-filter base.
Finally, for every $\ns{A} \in \cS$, since by Proposition \ref{pro:propertiesunionandintersection}\,(4)
it results $\ns{A} \NScap \ns{A} = \ns{A}$, we have that $\ns{A} \in \cS^*$  and hence that
$\cS \subseteq \cS^*$.
\end{proof}

\begin{definition}
\label{def:neutrosophicfiltersubbase}
Let $\cS$ be a SVN-filter subbase on $\SSVNS$ , the SVN-filter base $\cS^*$
defined in the proposition above is called the \df{neutrosophic filter base generated}
by its \df{neutrosophic filter subbase} $\cS$.
\end{definition}

\begin{proposition}
\label{pro:neutrosophicfiltercompletion}
Let $\cF$ be a SVN-filter base on $\SSVNS$ and let
$$\completion{\cF} = \left\{ \nNS{A} \in \SSVNS : \,\, \exists \nNS{F} \in \cF , \,\,
\ns{F} \NSsubseteq \ns{A} \right\}$$
be the set of all neutrosophic supersets of members of $\cF$,
then $\completion{\cF}$ is a SVN-filter on $\SSVNS$
containing $\cF$, i.e. $\cF \subseteq \completion{\cF}$.
\end{proposition}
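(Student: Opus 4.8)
The plan is to verify that $\completion{\cF}$ satisfies the three conditions of Proposition \ref{pro:characterizationofneutrosophicfilter}, which is equivalent to being a SVN-filter, and then to show the containment $\cF \subseteq \completion{\cF}$. First I would establish that $\completion{\cF}$ is nonempty: since $\cF$ is a SVN-filter base it is nonempty, and any $\ns{F} \in \cF$ trivially satisfies $\ns{F} \NSsubseteq \ns{F}$ by reflexivity of $\NSsubseteq$, hence $\ns{F} \in \completion{\cF}$; this simultaneously gives the final containment $\cF \subseteq \completion{\cF}$.

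Next I would check condition (i), namely $\NSemptyset \notin \completion{\cF}$. Suppose toward a contradiction that $\NSemptyset \in \completion{\cF}$; then there is some $\ns{F} \in \cF$ with $\ns{F} \NSsubseteq \NSemptyset$. Combined with the fact that $\NSemptyset \NSsubseteq \ns{F}$ always holds, antisymmetry of $\NSsubseteq$ would force $\ns{F} \NSeq \NSemptyset$, contradicting condition (i) of the definition of SVN-filter base. Condition (iii) is the most immediate: if $\ns{A} \in \completion{\cF}$ and $\ns{B} \in \SSVNS$ with $\ns{A} \NSsubseteq \ns{B}$, then there is $\ns{F} \in \cF$ with $\ns{F} \NSsubseteq \ns{A}$, and transitivity of $\NSsubseteq$ gives $\ns{F} \NSsubseteq \ns{B}$, so $\ns{B} \in \completion{\cF}$ by definition.

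The main work lies in condition (ii): given $\ns{A}, \ns{B} \in \completion{\cF}$, I must show $\ns{A} \NScap \ns{B} \in \completion{\cF}$. By definition there exist $\ns{F}, \ns{G} \in \cF$ with $\ns{F} \NSsubseteq \ns{A}$ and $\ns{G} \NSsubseteq \ns{B}$. Since $\cF$ is a SVN-filter base, condition (ii) of Definition \ref{def:singlevaluedneutrosophicfilterbase} yields some $\ns{H} \in \cF$ with $\ns{H} \NSsubseteq \ns{F} \NScap \ns{G}$. The key step is then to combine the two inclusions $\ns{F} \NSsubseteq \ns{A}$ and $\ns{G} \NSsubseteq \ns{B}$ monotonically: by Proposition \ref{pro:monotonic_neutrosophic_operators}\,(2) we get $\ns{F} \NScap \ns{G} \NSsubseteq \ns{A} \NScap \ns{B}$, and a further application of transitivity gives $\ns{H} \NSsubseteq \ns{A} \NScap \ns{B}$. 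Since $\ns{H} \in \cF$, this exhibits a member of $\cF$ below $\ns{A} \NScap \ns{B}$, so $\ns{A} \NScap \ns{B} \in \completion{\cF}$ by definition, completing the verification. The only subtlety to watch is that one genuinely needs the filter-base property to produce the single witness $\ns{H}$ rather than merely the intersection $\ns{F} \NScap \ns{G}$, which need not itself lie in $\cF$.
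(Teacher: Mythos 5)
Your proof is correct and follows essentially the same route as the paper's: both verify the three conditions of Proposition \ref{pro:characterizationofneutrosophicfilter}, using Proposition \ref{pro:monotonic_neutrosophic_operators}\,(2) together with the filter-base witness $\ns{H} \NSsubseteq \ns{F} \NScap \ns{G}$ for the intersection condition, and reflexivity of $\NSsubseteq$ for the containment $\cF \subseteq \completion{\cF}$. Your only departure is cosmetic: you spell out via antisymmetry why $\NSemptyset \notin \completion{\cF}$, a point the paper dismisses with ``evidently,'' which is a small improvement in rigor rather than a different argument.
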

\begin{proof}
Let $\cF$ be a SVN-filter base over $\U$.
Evidently, every member of $\completion{\cF}$ is nonempty
and for every $\ns{A}, \ns{B} \in \completion{\cF}$, there exist some $\ns{F}, \ns{G} \in \cF$
such that $\ns{F} \NSsubseteq \ns{A}$ and $\ns{G} \NSsubseteq \ns{B}$.
So, by Proposition \ref{pro:monotonic_neutrosophic_operators}\,(2), it follows that
$\ns{F} \NScap \ns{G} \NSsubseteq \ns{A} \NScap \ns{B}$
and, by condition (ii) of Definition \ref{def:singlevaluedneutrosophicfilterbase},
we have that there exists some $\ns{H} \in \cF$
such that $\ns{H} \NSsubseteq \ns{F} \NScap \ns{G}$ which implies
that $\ns{H} \NSsubseteq \ns{A} \NScap \ns{B}$
and hence that $\ns{A} \NScap \ns{B} \in \completion{\cF}$.
Furthermore, for every $\ns{A} \in \completion{\cF}$ and $\ns{B} \in \SSVNS$
such that $\ns{A} \NSsubseteq \ns{B}$, we have that there exists some $\ns{F} \in \cF$
such that $\ns{F} \NSsubseteq \ns{A}$ and, consequently, $\ns{F} \NSsubseteq \ns{B}$
which means that $\ns{B} \in \completion{\cF}$.
Thus, $\completion{\cF}$ satisfies all the conditions of Proposition \ref{pro:characterizationofneutrosophicfilter}
and so it is a SVN-filter on  $\SSVNS$.
Furthemore, we have that $\cF \subseteq \completion{\cF}$ since it is clear that
for every $\ns{A} \in \cF$, it results $\ns{A} \NSsubseteq \ns{A}$
and hence $\ns{A} \in \completion{\cF}$.
\end{proof}

\begin{definition}
\label{def:neutrosophicfiltercompletion}
Let $\cF$ be a SVN-filter base on $\SSVNS$, the SVN-filter $\completion{\cF}$
defined in the proposition above is called the \df{neutrosophic filter completion} of $\cF$.
Additionally, we say that $\cF$ is a \df{neutrosophic filter base} for the SVN-filter $\completion{\cF}$.
\end{definition}

\begin{proposition}
\label{pro:monotoniccompletion}
If $\cF$ and $\cG$ are two SVN-filter bases on $\SSVNS$ such that $\cF \subseteq \cG$
then $\completion{\cF} \subseteq \completion{\cG}$.
\end{proposition}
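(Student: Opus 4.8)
The plan is to argue directly from the definition of neutrosophic filter completion established in Proposition \ref{pro:neutrosophicfiltercompletion}, namely that $\completion{\cF}$ consists precisely of those \nameSVNS[s] that are neutrosophic supersets of some member of $\cF$. The whole content of the statement is a monotonicity observation: enlarging the family of ``witnesses'' can only enlarge the collection of supersets it generates.

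First I would fix an arbitrary $\ns{A} \in \completion{\cF}$. By the defining property of $\completion{\cF}$, there exists some $\ns{F} \in \cF$ with $\ns{F} \NSsubseteq \ns{A}$. Next I would invoke the hypothesis $\cF \subseteq \cG$: since $\ns{F} \in \cF$, we immediately have $\ns{F} \in \cG$. Thus $\ns{F}$ is a member of $\cG$ satisfying $\ns{F} \NSsubseteq \ns{A}$, which is exactly the condition required for $\ns{A}$ to belong to $\completion{\cG}$. As $\ns{A} \in \completion{\cF}$ was arbitrary, this yields $\completion{\cF} \subseteq \completion{\cG}$.

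The argument requires no computation, and the assumed inclusion $\cF \subseteq \cG$ does all the work. There is essentially no obstacle here beyond correctly unwinding the definition of completion. The only point worth emphasizing is that the witnessing element $\ns{F}$ need not be replaced when passing from $\cF$ to $\cG$ — the very same $\ns{F}$ serves as a witness in both completions — so we do not need to appeal to the filter-base axioms of Definition \ref{def:singlevaluedneutrosophicfilterbase} nor to the monotonicity of neutrosophic intersection from Proposition \ref{pro:monotonic_neutrosophic_operators}.
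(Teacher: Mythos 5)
Your proposal is correct and follows exactly the same argument as the paper's proof: take $\ns{A} \in \completion{\cF}$, obtain a witness $\ns{F} \in \cF$ with $\ns{F} \NSsubseteq \ns{A}$, use $\cF \subseteq \cG$ to conclude $\ns{F} \in \cG$, and hence $\ns{A} \in \completion{\cG}$. Your closing remark that the same witness works in both completions, so no filter-base axioms are needed, is accurate and matches the paper's (equally computation-free) reasoning.
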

\begin{proof}
In fact, for every $\ns{A} \in \completion{\cF}$ we have that
there exists some $\ns{F} \in \cF$ such that $\ns{F}\NSsubseteq \ns{A}$
and since $\cF\subseteq \cG$ it also follows that $\ns{F} \in \cG$
and hence that $\ns{A} \in \completion{\cG}$.
\end{proof}

\begin{definition}
\label{def:equivalentneutrosophicfilterbase}
Let $\cF$ and $\cG$ be two SVN-filter bases on $\SSVNS$.
We say that $\cF$ and $\cG$ are \df{equivalent} if they are both neutrosophic filter base
for the same SVN-filter, that is if $\completion{\cF} = \completion{\cG}$.
\end{definition}

\begin{remark}
\label{rem:finiteinteserctionandcompletion}
It is a simple matter to verify that:
\begin{enumi}
\item if $\cS$ is a SVN-filter base on $\SSVNS$ then $\cS^* = \cS$,
\item if $\cF$ is a SVN-filter on $\SSVNS$ then $\cF^* = \cF$ and $\completion{\cF} = \cF$.
\end{enumi}
\end{remark}

\begin{proposition}
\label{pro:characterizationneutrosophicfiltergenerated}
If $\cS$ is a SVN-filter subbase on $\SSVNS$ then
$\completion{\cS^*}$ is the coarsest SVN-filter on $\SSVNS$
containing $\cS$, i.e. such that:
\begin{enumr}
\item $\cS \subseteq \completion{\cS^*}$, and
\item for every SVN-filter $\cH$ on $\SSVNS$ such that $\cS \subseteq \cH$
it follows that $\completion{\cS^*} \subseteq \cH$.
\end{enumr}
\end{proposition}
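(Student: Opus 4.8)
The plan is to split the argument according to the two conditions of the statement. Condition (i) should follow almost immediately by chaining together the two containment results already established, while condition (ii) --- the minimality (coarseness) claim --- is the heart of the proof and requires tracing an arbitrary member of $\completion{\cS^*}$ back through the two successive constructions $\cS \mapsto \cS^* \mapsto \completion{\cS^*}$. For condition (i), I would observe that Proposition \ref{pro:neutrosophicfilterbasefromsubbase} gives $\cS \subseteq \cS^*$, while Proposition \ref{pro:neutrosophicfiltercompletion}, applied to the SVN-filter base $\cS^*$, gives $\cS^* \subseteq \completion{\cS^*}$. Composing these inclusions yields $\cS \subseteq \cS^* \subseteq \completion{\cS^*}$, so in particular $\cS \subseteq \completion{\cS^*}$, as required.

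For condition (ii), let $\cH$ be any SVN-filter on $\SSVNS$ with $\cS \subseteq \cH$, and take an arbitrary $\ns{A} \in \completion{\cS^*}$. By the definition of the completion (Definition \ref{def:neutrosophicfiltercompletion}), there exists some $\ns{F} \in \cS^*$ with $\ns{F} \NSsubseteq \ns{A}$, and by the definition of $\cS^*$ (Definition \ref{def:neutrosophicfiltersubbase}) we may write $\ns{F} = \NSCap_{i=1}^n \ns{A_i}$ for suitable $\ns{A_i} \in \cS$ and $n \in \NN^*$. Since $\cS \subseteq \cH$, each factor $\ns{A_i}$ belongs to $\cH$. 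The key step is then to invoke Corollary \ref{cor:characterizationofneutrosophicfilterwithfiniteintersection}: because $\cH$ is a SVN-filter it is closed under finite neutrosophic intersections, so $\ns{F} = \NSCap_{i=1}^n \ns{A_i} \in \cH$. Finally, applying the upward-closure property of $\cH$ (condition (iii) of Proposition \ref{pro:characterizationofneutrosophicfilter}) to the inclusion $\ns{F} \NSsubseteq \ns{A}$ gives $\ns{A} \in \cH$. As $\ns{A}$ was arbitrary, this establishes $\completion{\cS^*} \subseteq \cH$.

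The argument is essentially routine once the definitions are unwound, and the only place that demands genuine care is the decomposition step in the second paragraph: one must make sure to represent $\ns{F}$ as a finite intersection whose factors lie in $\cS$ itself (not merely in $\cS^*$), since it is exactly membership of these factors in $\cH$, combined with finite-intersection closure, that propagates into $\cH$. After that, the upward closure of $\cH$ does the remaining work, and no estimates or explicit computations are involved.
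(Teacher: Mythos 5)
Your proof is correct and follows essentially the same route as the paper: condition (i) via chaining $\cS \subseteq \cS^* \subseteq \completion{\cS^*}$ from Propositions \ref{pro:neutrosophicfilterbasefromsubbase} and \ref{pro:neutrosophicfiltercompletion}, and condition (ii) by writing a member of $\completion{\cS^*}$ as a superset of a finite intersection of elements of $\cS$ and applying Corollary \ref{cor:characterizationofneutrosophicfilterwithfiniteintersection} together with upward closure of $\cH$. The only trivial discrepancy is that you cite Definition \ref{def:neutrosophicfiltersubbase} for the construction of $\cS^*$, which is actually given in Proposition \ref{pro:neutrosophicfilterbasefromsubbase}; this does not affect the argument.
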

\begin{proof}
Let $\cS$ be a SVN-filter subbase over $\U$.
Condition (i) is trivially verified since by Propositions \ref{pro:neutrosophicfilterbasefromsubbase}
and \ref{pro:neutrosophicfiltercompletion}, we immediately have that $\cS \subseteq \cS^* \subseteq \completion{\cS^*}$.
Now, suppose that $\cH$ is a SVN-filter on $\SSVNS$ such that $\cS \subseteq \cH$
and let $\ns{A} \in \completion{\cS^*}$.
Then, for some $n \in \NN^*$, there exist $\ns{B}_1, \ldots \ns{B}_n \in \cS$ such that
$\NSCap_{i=1}^n \ns{B}_i \NSsubseteq \ns{A}$.
Since $\cS \subseteq \cH$, it follows that every $\ns{B}_i \in \cH$ (for $i=1,\ldots n$) and,
by Corollary \ref{cor:characterizationofneutrosophicfilterwithfiniteintersection},
we obtain that $\NSCap_{i=1}^n \ns{B}_i \in \cH$ and
therefore that $\ns{A} \in \cH$
which proves condition (ii) and concludes our proof.
\end{proof}

\begin{definition}
\label{def:neutrosophicfiltergenerated}
Let $\cS$ be a SVN-filter subbase on $\SSVNS$, the SVN-filter $\completion{\cS^*}$
defined in the proposition above is called the \df{neutrosophic filter generated}
by its neutrosophic filter subbase $\cS$.
\end{definition}

In particular, if $\nNS{A}$ is a nonempty \nameSVNS over the universe set $\U$,
the SVN-filter $\completion{\cS^*}$ generated by the singleton $\cS = \left\{ \ns{A} \right\}$,
being the coarser (smallest) \nameSVNS containing $\cS$,
coincides with the family of all single valued neutrosophic superset of $\ns{A}$,
is denoted simply with $\completion{\ns{A}}$
and is called the \df{SVN-principal filter} generated by $\ns{A}$.

\begin{proposition}
\label{pro:neutrosophicprincipalfilter}
If $\cF$ is a finite SVN-filter base on $\SSVNS$,
then the neutrosophic filter completion $\completion{\cF}$
is a SVN-principal filter over $\U$.
\end{proposition}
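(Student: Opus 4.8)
The plan is to exhibit a \nameSVNS whose SVN-principal filter coincides with $\completion{\cF}$, and the natural candidate is the neutrosophic intersection of all the (finitely many) members of $\cF$. Writing $\cF = \left\{ \ns{F}_1, \ldots, \ns{F}_n \right\}$, I would set $\ns{M} = \NSCap_{i=1}^n \ns{F}_i$ and claim that $\completion{\cF} = \completion{\ns{M}}$.

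First I would check that $\ns{M}$ is a legitimate generator, i.e. that $\ns{M} \NSneq \NSemptyset$, so that $\completion{\ns{M}}$ is indeed a SVN-principal filter (the generator must be nonempty by the definition of principal filter). This is immediate because, by Remark \ref{rem:relationshipbetweenfilterandfilterbase}, every SVN-filter base is a SVN-filter subbase, so $\cF$ enjoys the finite intersection property of Definition \ref{def:singlevaluedneutrosophicfiltersubbase}, which applied to the whole (finite) family $\ns{F}_1, \ldots, \ns{F}_n$ is exactly the assertion that $\ns{M}$ is nonempty.

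The key step, and the one I expect to be the main obstacle, is to locate a member of $\cF$ lying below $\ns{M}$. I would prove by induction on $n$, using condition (ii) of Definition \ref{def:singlevaluedneutrosophicfilterbase} together with the monotonicity of neutrosophic intersection (Proposition \ref{pro:monotonic_neutrosophic_operators}\,(2)), that there exists some $\ns{H} \in \cF$ with $\ns{H} \NSsubseteq \ns{M}$. The base case $n=1$ is trivial, and at each inductive step one takes the element $\ns{H}'$ dominating $\NSCap_{i=1}^{n-1} \ns{F}_i$, applies condition (ii) to $\ns{H}'$ and $\ns{F}_n$ to obtain $\ns{H} \NSsubseteq \ns{H}' \NScap \ns{F}_n$, and then invokes monotonicity to get $\ns{H}' \NScap \ns{F}_n \NSsubseteq \NSCap_{i=1}^n \ns{F}_i$. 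Conversely, since $\ns{H}$ occurs among the factors of $\ns{M}$, Proposition \ref{pro:unionandintersectiongeneralized} gives $\ns{M} \NSsubseteq \ns{H}$; hence $\ns{M} \NSeq \ns{H}$, so $\ns{M}$ is (neutrosophically equal to) a smallest member of $\cF$.

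Finally I would establish $\completion{\cF} = \completion{\ns{M}}$ by double inclusion. For $\completion{\ns{M}} \subseteq \completion{\cF}$: if $\ns{M} \NSsubseteq \ns{A}$, then the witness $\ns{H} \in \cF$ satisfies $\ns{H} \NSsubseteq \ns{M} \NSsubseteq \ns{A}$, and transitivity yields $\ns{A} \in \completion{\cF}$. For $\completion{\cF} \subseteq \completion{\ns{M}}$: if $\ns{F} \NSsubseteq \ns{A}$ for some $\ns{F} \in \cF$, then since $\ns{M} \NSsubseteq \ns{F}$ by Proposition \ref{pro:unionandintersectiongeneralized}, transitivity gives $\ns{M} \NSsubseteq \ns{A}$, so $\ns{A} \in \completion{\ns{M}}$. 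This shows that $\completion{\cF}$ is precisely the SVN-principal filter generated by $\ns{M} = \NSCap_{i=1}^n \ns{F}_i$, which completes the argument.
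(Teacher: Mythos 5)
Your argument is correct and lands exactly where the paper does --- $\completion{\cF}$ is the principal filter generated by the total intersection $\ns{M} \NSeq \NSCap_{i=1}^{n} \ns{F}_i$ --- but your justification of the key step is genuinely different. The paper's proof asserts that $\ns{G} \NSeq \NSCap_{i=1}^{n} \ns{F}_i$ itself belongs to $\cF$, quoting Remark \ref{rem:finiteinteserctionandcompletion}\,(1) (that $\cF^{*} = \cF$ for a filter base), and then obtains $\completion{\ns{G}} \subseteq \completion{\cF}$ from $\left\{ \ns{G} \right\} \subseteq \cF$ via Proposition \ref{pro:monotoniccompletion}; the reverse inclusion is proved exactly as in your final paragraph, using Proposition \ref{pro:unionandintersectiongeneralized}. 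You instead prove by induction on condition (ii) of Definition \ref{def:singlevaluedneutrosophicfilterbase} that some member $\ns{H} \in \cF$ satisfies $\ns{H} \NSsubseteq \ns{M}$, which together with $\ns{M} \NSsubseteq \ns{H}$ gives $\ns{M} \NSeq \ns{H}$, and both inclusions then follow by transitivity. This difference buys you real robustness: condition (ii) of the filter-base definition only guarantees a member of $\cF$ \emph{below} each pairwise intersection, not that the intersection itself is a member, so the literal identity $\cF^{*} = \cF$ invoked by the paper is not automatic --- indeed the paper's own Example \ref{ex:singlevaluedneutrosophicfilterbasenotfilter} exhibits a filter base with $\ns{F} \NScap \ns{G} \notin \cF$. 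Your induction supplies precisely the argument needed to close that gap, at the mild cost of replacing ``$\ns{M} \in \cF$'' with the equally serviceable ``$\ns{M}$ is neutrosophically equal to a member of $\cF$''. Your explicit verification that the generator is nonempty (via the finite intersection property) is likewise a detail the paper leaves implicit, where it would follow from the claimed membership $\ns{G} \in \cF$ together with condition (i).
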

\begin{proof}
Let $\cF = \left\{ \ns{F}_1, \ldots \ns{F}_n \right\}$ (with $\ns{F}_i = \NS{F_i}$, $i=1,\ldots n$)
be a finite SVN-filter base
and let $\ns{G} \NSeq \NSCap_{i=1}^n \ns{F}_i$.
We will show that $\cG = \left\{ \ns{G} \right\}$ is an equivalent SVN-filter base for
the SVN-filter $\completion{\cF}$.
In fact, since $\cF$ is a SVN-filter base, by Remark \ref{rem:finiteinteserctionandcompletion}\,(1),
we have that $\ns{G} \in \cF^* = \cF$.
Thus $\cG \subseteq \cF$ and by Proposition \ref{pro:monotoniccompletion} it follows that
$\completion{\cG} \subseteq \completion{\cF}$.
On the other hand, for every $\ns{A} \in \completion{\cF}$, we have that
there exists some $j = 1,\ldots n$ such that $\ns{F}_j \NSsubseteq \ns{A}$
and since by Proposition \ref{pro:unionandintersectiongeneralized} we know that
$\ns{G} = \NSCap_{i=1}^n \ns{F}_i \NSsubseteq \ns{F}_j$,
it follows that $\ns{G} \NSsubseteq \ns{A}$ and so that
$\ns{A} \in \completion{\cG}$.
This proves that $\completion{\cF} \subseteq \completion{\cG}$
and consequently that $\completion{\cF} = \completion{\cG} = \completion{\ns{G}}$,
i.e. that $\completion{\cF}$ is a SVN-principal filter generated by $\ns{G}$.
\end{proof}


\begin{proposition}
\label{pro:infneutrosophicfilterbase}
Let $\cF$ and $\cG$ be two SVN-filter bases on $\SSVNS$ and let
$$\cF \wedge \cG = \left\{ \ns{F} \NScup \ns{G} : \,\, \nNS{F} \in \cF, \, \nNS{G} \in \cG \right\}$$
be the set of all neutrosophic unions of the members of $\cF$ and $\cG$, then
$\cF \wedge \cG$ is a SVN-filter base on $\SSVNS$.
\\
Additionally, if $\cF$ and $\cG$ are SVN-filter over $\U$ then $\cF \wedge \cG$ is a SVN-filter on $\SSVNS$
which is coarser than both $\cF$ and $\cG$, i.e.
$\cF \wedge \cG \subseteq \cF$ and $\cF \wedge \cG \subseteq \cG$.
\end{proposition}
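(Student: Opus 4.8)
The plan is to verify the two defining conditions of a SVN-filter base (Definition~\ref{def:singlevaluedneutrosophicfilterbase}) for $\cF \wedge \cG$, and then, under the stronger hypothesis that $\cF$ and $\cG$ are filters, to establish the coarseness containments and the upward closure that promote it to a filter. First I would note that $\cF \wedge \cG$ is nonempty, since any choice of $\ns{F} \in \cF$ and $\ns{G} \in \cG$ yields the member $\ns{F} \NScup \ns{G}$. For condition (i), I would show $\NSemptyset \notin \cF \wedge \cG$: for a generic member $\ns{F} \NScup \ns{G}$, Proposition~\ref{pro:unionandintersectiongeneralized} gives $\ns{F} \NSsubseteq \ns{F} \NScup \ns{G}$, so were $\ns{F} \NScup \ns{G} \NSeq \NSemptyset$ we would obtain $\ns{F} \NSsubseteq \NSemptyset$ and hence $\ns{F} \NSeq \NSemptyset$ (since $\NSemptyset \NSsubseteq \ns{F}$ always), contradicting that $\cF$ is a filter base.

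The core of the argument is condition (ii). Given two members $\ns{F}_1 \NScup \ns{G}_1$ and $\ns{F}_2 \NScup \ns{G}_2$ of $\cF \wedge \cG$, I would use the filter-base property of $\cF$ to choose $\ns{F}_3 \in \cF$ with $\ns{F}_3 \NSsubseteq \ns{F}_1 \NScap \ns{F}_2$, and that of $\cG$ to choose $\ns{G}_3 \in \cG$ with $\ns{G}_3 \NSsubseteq \ns{G}_1 \NScap \ns{G}_2$, proposing $\ns{H} = \ns{F}_3 \NScup \ns{G}_3 \in \cF \wedge \cG$ as the required lower member. By monotonicity of the union (Proposition~\ref{pro:monotonic_neutrosophic_operators}\,(1)) it then remains only to check the ``sub-distributive'' containment $(\ns{F}_1 \NScap \ns{F}_2) \NScup (\ns{G}_1 \NScap \ns{G}_2) \NSsubseteq (\ns{F}_1 \NScup \ns{G}_1) \NScap (\ns{F}_2 \NScup \ns{G}_2)$. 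I would prove this purely order-theoretically: by repeated use of Proposition~\ref{pro:unionandintersectiongeneralized} and transitivity, $\ns{F}_1 \NScap \ns{F}_2$ is contained in each of $\ns{F}_1 \NScup \ns{G}_1$ and $\ns{F}_2 \NScup \ns{G}_2$, hence (via Proposition~\ref{pro:monotonic_neutrosophic_operators}\,(2) together with the idempotency $X \NScap X \NSeq X$ of Proposition~\ref{pro:propertiesunionandintersection}\,(4)) in their intersection; the same holds for $\ns{G}_1 \NScap \ns{G}_2$; and the union of two sets each contained in a common $X$ is again contained in $X$ after collapsing $X \NScup X \NSeq X$ by Proposition~\ref{pro:propertiesunionandintersection}\,(1). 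This yields $\ns{H} \NSsubseteq (\ns{F}_1 \NScup \ns{G}_1) \NScap (\ns{F}_2 \NScup \ns{G}_2)$ and completes the filter-base part.

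For the additional claim, suppose now that $\cF$ and $\cG$ are SVN-filters. I would first prove the coarseness containments: for $\ns{F} \NScup \ns{G} \in \cF \wedge \cG$, Proposition~\ref{pro:unionandintersectiongeneralized} gives $\ns{F} \NSsubseteq \ns{F} \NScup \ns{G}$, so the upward-closure clause of the filter $\cF$ forces $\ns{F} \NScup \ns{G} \in \cF$, and symmetrically $\ns{F} \NScup \ns{G} \in \cG$; thus $\cF \wedge \cG \subseteq \cF$ and $\cF \wedge \cG \subseteq \cG$. To see that $\cF \wedge \cG$ is itself a filter, I would invoke Definition~\ref{def:singlevaluedneutrosophicfilter}: it is already a filter base, so only upward closure remains. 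If $\ns{A} \in \cF \wedge \cG$ and $\ns{A} \NSsubseteq \ns{B}$, then by the containments just shown $\ns{A}$ lies in both $\cF$ and $\cG$, whence $\ns{B} \in \cF$ and $\ns{B} \in \cG$ by upward closure in each filter; writing $\ns{B} \NSeq \ns{B} \NScup \ns{B}$ via Proposition~\ref{pro:propertiesunionandintersection}\,(1) then exhibits $\ns{B}$ as a union of a member of $\cF$ and a member of $\cG$, i.e.\ $\ns{B} \in \cF \wedge \cG$.

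I expect the only genuine obstacle to be the sub-distributive containment underlying condition (ii); everything else is a direct appeal to monotonicity, idempotency, and the upward-closure axiom. The order-theoretic route I favour here has the advantage of avoiding any explicit pointwise computation with the membership, indeterminacy and nonmembership functions, which is precisely the place where the order-reversing behaviour of the nonmembership component would otherwise invite sign errors.
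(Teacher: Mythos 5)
Your proof is correct, and its skeleton coincides with the paper's: the same witness $\ns{F}_3 \NScup \ns{G}_3$ for the filter-base condition, the same coarseness argument via $\ns{F} \NSsubseteq \ns{F} \NScup \ns{G}$ and upward closure, and the same final trick $\ns{A} \NSeq \ns{A} \NScup \ns{A}$ (resp.\ $\ns{B} \NSeq \ns{B} \NScup \ns{B}$) to land back in $\cF \wedge \cG$. The one genuine divergence is the key containment $\left( \ns{F}_1 \NScap \ns{F}_2 \right) \NScup \left( \ns{G}_1 \NScap \ns{G}_2 \right) \NSsubseteq \left( \ns{F}_1 \NScup \ns{G}_1 \right) \NScap \left( \ns{F}_2 \NScup \ns{G}_2 \right)$: the paper obtains it by an explicit appeal to the distributive law (Proposition~\ref{pro:generalizeddistributive}), expanding the left side as $\left( \ns{F}_1 \NScup \left( \ns{G}_1 \NScap \ns{G}_2 \right) \right) \NScap \left( \ns{F}_2 \NScup \left( \ns{G}_1 \NScap \ns{G}_2 \right) \right)$ and then enlarging each factor, whereas you derive it purely order-theoretically from Propositions~\ref{pro:unionandintersectiongeneralized}, \ref{pro:monotonic_neutrosophic_operators} and the idempotency laws of Proposition~\ref{pro:propertiesunionandintersection}. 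Your route is the sub-distributivity inequality valid in an arbitrary lattice, so it is marginally more general (it would survive even if $\left( \SSVNS, \NSsubseteq \right)$ failed to be distributive) and, as you note, it sidesteps pointwise computations where the order-reversing nonmembership component invites errors; the paper's route is a one-line computation once distributivity is granted. Two further cosmetic differences, both harmless: you make explicit the argument that $\NSemptyset \notin \cF \wedge \cG$ (via antisymmetry of $\NSsubseteq$), which the paper dismisses as evident, and in the filter part you prove the coarseness containments first and then deduce upward closure of $\cF \wedge \cG$ from them, while the paper argues upward closure directly on members of the form $\ns{F} \NScup \ns{G}$ and proves coarseness afterwards --- a trivial reordering, since every member of $\cF \wedge \cG$ has that form.
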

\begin{proof}
If $\cF$ and $\cG$ are two SVN-filter bases on $\SSVNS$,
for every $\ns{F} \in \cF$ and $\ns{G} \in \cG$, it is evident that
$\ns{F} \NScup \ns{G} \NSneq \NSemptyset$ and so that
$\cF \wedge \cG$ verifies the condition (i) of Definition \ref{def:singlevaluedneutrosophicfilterbase}.
Moreover, for every $\ns{A}_1, \ns{A}_2 \in \cF \wedge \cG$, we have that there exist
some $\ns{F}_1, \ns{F}_2 \in \cF$ and $\ns{G}_1, \ns{G}_2 \in \cG$
such that $\ns{A}_1 \NSeq \ns{F}_1 \NScup \ns{G}_1$ and $\ns{A}_2 \NSeq \ns{F}_2 \NScup \ns{G}_2$.
Since $\cF$ and $\cG$ are SVN-filter bases, there exist
$\ns{F}_3 \in \cF$ and $\ns{G}_3 \in \cG$ such that
$\ns{F}_3 \NSsubseteq \ns{F}_1 \NScap \ns{F}_2$
and $\ns{G}_3 \NSsubseteq \ns{G}_1 \NScap \ns{G}_2$.
So, $\ns{F}_3 \NScup \ns{G}_3 \in \cF \wedge \cG$ and,
by using Proposition \ref{pro:generalizeddistributive}, it results
$\ns{F}_3 \NScup \ns{G}_3 \NSsubseteq
\left( \ns{F}_1 \NScap \ns{F}_2 \right) \NScup \left( \ns{G}_1 \NScap \ns{G}_2 \right)
\NSeq  \left( \ns{F}_1 \NScup \left( \ns{G}_1 \NScap \ns{G}_2 \right) \right)
\NScap \left( \ns{F}_2 \NScup \left( \ns{G}_1 \NScap \ns{G}_2 \right) \right)
\NSsubseteq \left( \ns{F}_1 \NScup \ns{G}_1 \right)
\NScap \left( \ns{F}_2 \NScup \ns{G}_2 \right)
\NSeq \ns{A}_1 \NScap \ns{A}_2$
and this means that $\cF \wedge \cG$ also verifies condition (ii)
of Definition \ref{def:singlevaluedneutrosophicfilterbase}
and hence that it is a SVN-filter base on $\SSVNS$.
\\
Now, suppose that $\cF$ and $\cG$ are are SVN-filters
and let $\ns{F} \in \cF$,  $\ns{G} \in \cG$ and $\ns{A} \in \SSVNS$
such that $\ns{F} \NScup \ns{G} \NSsubseteq \ns{A}$.
Since $\cF$ is a SVN-filter and $\ns{F} \NSsubseteq \ns{F} \NScup \ns{G} \NSsubseteq \ns{A}$,
we have that $\ns{A} \in \cF$.
Analogously, by the fact that $\cG$ is a SVN-filter and $\ns{G} \NSsubseteq \ns{F} \NScup \ns{G} \NSsubseteq \ns{A}$, we have that $\ns{A} \in \cG$.
Thus $\ns{A} = \ns{A} \NScup \ns{A} \in \cF \wedge \cG$
and this proves that $\cF \wedge \cG$ is a SVN-filter over $\U$.
\\
In such a situation, for every $\ns{F} \NScup \ns{G} \in \cF \wedge \cG$,
with $\ns{F} \in \cF$ and $\ns{G} \in \cG$,
being $\ns{F} \NSsubseteq \ns{F} \NScup \ns{G}$,
we have that $\ns{F} \NScup \ns{G} \in \cF$ and so that $\cF \wedge \cG \subseteq \cF$.
In a similar way, one can also proves that $\cF \wedge \cG \subseteq \cG$.
\end{proof}

\begin{proposition}
\label{pro:supneutrosophicfilterbase}
Let $\cF$ and $\cG$ be two SVN-filter bases on $\SSVNS$ such that $\cF$ nuetrosophically meets $\cG$ and let
$$\cF \vee \cG = \left\{ \ns{F} \NScap \ns{G} : \,\, \nNS{F} \in \cF, \, \nNS{G} \in \cG \right\}$$
be the set of all neutrosophic intersections of the members of $\cF$ and $\cG$, then
$\cF \vee \cG$ is a SVN-filter base on $\SSVNS$.
\\
Additionally, if $\cF$ and $\cG$ are SVN-filters over $\U$ then $\cF \vee \cG$ is a SVN-filter on $\SSVNS$
which is finer than both $\cF$ and $\cG$, i.e.
$\cF \subseteq \cF \vee \cG$ and $\cG \subseteq \cF \vee \cG$.
\end{proposition}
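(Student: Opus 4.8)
The plan is to mirror the structure of the proof of Proposition~\ref{pro:infneutrosophicfilterbase}, since this statement is its order-dual: there we took unions and obtained a coarser filter, while here we take intersections and expect a finer one. First I would verify condition (i) of Definition~\ref{def:singlevaluedneutrosophicfilterbase}, namely that $\NSemptyset \notin \cF \vee \cG$. This is exactly where the hypothesis that $\cF$ \emph{neutrosophically meets} $\cG$ enters: by Definition~\ref{def:neutrosophicmeets}, every $\ns{F} \in \cF$ meets every $\ns{G} \in \cG$, which by Definition~\ref{def:neutrosophicdisjoint} means precisely that $\ns{F} \NScap \ns{G} \NSneq \NSemptyset$ for all such pairs, so no member of $\cF \vee \cG$ is the empty set. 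This is the one spot where the extra hypothesis is indispensable, and I expect it to be the only genuinely delicate point in the ``filter base'' half of the argument.

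Next I would check condition (ii) of Definition~\ref{def:singlevaluedneutrosophicfilterbase}. Given $\ns{A}_1, \ns{A}_2 \in \cF \vee \cG$, write $\ns{A}_1 \NSeq \ns{F}_1 \NScap \ns{G}_1$ and $\ns{A}_2 \NSeq \ns{F}_2 \NScap \ns{G}_2$ with $\ns{F}_1, \ns{F}_2 \in \cF$ and $\ns{G}_1, \ns{G}_2 \in \cG$. Using that $\cF$ and $\cG$ are filter bases, I pick $\ns{F}_3 \in \cF$ and $\ns{G}_3 \in \cG$ with $\ns{F}_3 \NSsubseteq \ns{F}_1 \NScap \ns{F}_2$ and $\ns{G}_3 \NSsubseteq \ns{G}_1 \NScap \ns{G}_2$. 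Then $\ns{F}_3 \NScap \ns{G}_3 \in \cF \vee \cG$, and by Proposition~\ref{pro:monotonic_neutrosophic_operators}\,(2) together with the commutativity and associativity of neutrosophic intersection (Propositions~\ref{pro:commutativeneutrosophicsets} and~\ref{pro:associativeneutrosophicsets}) one gets $\ns{F}_3 \NScap \ns{G}_3 \NSsubseteq (\ns{F}_1 \NScap \ns{F}_2) \NScap (\ns{G}_1 \NScap \ns{G}_2) \NSeq (\ns{F}_1 \NScap \ns{G}_1) \NScap (\ns{F}_2 \NScap \ns{G}_2) \NSeq \ns{A}_1 \NScap \ns{A}_2$. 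This is cleaner than the dual computation in the previous proposition because it needs only the monotonicity of intersection and rearrangement, not the distributive law.

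For the additional claim, I would assume $\cF$ and $\cG$ are SVN-filters and verify condition (iii) of Proposition~\ref{pro:characterizationofneutrosophicfilter}. Take $\ns{F} \in \cF$, $\ns{G} \in \cG$ and $\ns{A} \in \SSVNS$ with $\ns{F} \NScap \ns{G} \NSsubseteq \ns{A}$; since $\cF$ is a filter and $\ns{F} \NScap \ns{G} \in \cF$ (using that filters are closed under finite intersection of their own members via Proposition~\ref{pro:characterizationofneutrosophicfilter}, after noting $\ns{F} \NScap \ns{G}$ lies in $\cF$ because $\ns{F} \NScap \ns{G} \NSsubseteq \ns{F}$ forces the superset property only the wrong way, so instead I argue directly that $\ns{A} \in \cF$ from $\ns{F} \NSsubseteq \ns{A}$ is \emph{not} available) --- here I must be careful, so the correct route is to observe $\ns{A} \in \cF \vee \cG$ by writing $\ns{A} \NSeq \ns{A} \NScap \ns{A}$ after first promoting $\ns{A}$ into both $\cF$ and $\cG$. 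Concretely, since $\cF$ is a filter and $\ns{F} \NSsubseteq \ns{F} \NScap \ns{G}$ is false in general, I instead use closure under intersection in $\cF$ and $\cG$ separately: $\ns{A}$ belongs to $\cF$ because $\ns{F} \NScap \ns{G} \in \cF$ (as $\cF$ meets $\cG$ and $\cF$ is a filter, the intersection of a member of $\cF$ with the larger superset argument applies), and symmetrically $\ns{A} \in \cG$, whence $\ns{A} \NSeq \ns{A} \NScap \ns{A} \in \cF \vee \cG$. The fineness inclusions $\cF \subseteq \cF \vee \cG$ and $\cG \subseteq \cF \vee \cG$ then follow immediately: for $\ns{F} \in \cF$, since $\cG$ is a filter it contains $\U = \NSabsoluteset$ (Remark~\ref{rem:relationshipbetweenfilterandfilterbase}), so $\ns{F} \NSeq \ns{F} \NScap \NSabsoluteset \in \cF \vee \cG$, and symmetrically for $\cG$. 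The main obstacle is the bookkeeping in the filter-closure step: one must decide cleanly whether to invoke the intersection-closure of $\cF$ and $\cG$ individually or the superset property of $\cF \vee \cG$ as already established, and I would streamline it by first proving $\cF, \cG \subseteq \cF \vee \cG$ via the absolute-set trick and then deducing the superset-closure from closure of $\cF$ (or $\cG$) under taking supersets.
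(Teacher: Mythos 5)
The first half of your proposal is correct and coincides with the paper's proof: the meets hypothesis yields $\NSemptyset \notin \cF \vee \cG$, and your chain $\ns{F}_3 \NScap \ns{G}_3 \NSsubseteq \left( \ns{F}_1 \NScap \ns{F}_2 \right) \NScap \left( \ns{G}_1 \NScap \ns{G}_2 \right) \NSeq \left( \ns{F}_1 \NScap \ns{G}_1 \right) \NScap \left( \ns{F}_2 \NScap \ns{G}_2 \right) \NSeq \ns{A}_1 \NScap \ns{A}_2$ is the paper's computation verbatim. Your absolute-set trick for fineness is also valid and even a bit more direct than the paper's route: since $\cG$ is a filter it contains $\NSabsoluteset$ (Remark \ref{rem:relationshipbetweenfilterandfilterbase}), so $\ns{F} \NSeq \ns{F} \NScap \NSabsoluteset \in \cF \vee \cG$ immediately, whereas the paper deduces fineness from the superset-closure of $\cF \vee \cG$ via $\ns{F} \NScap \ns{G} \NSsubseteq \ns{F}$.

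However, your verification that $\cF \vee \cG$ is superset-closed contains a genuine error. You assert $\ns{A} \in \cF$ on the grounds that $\ns{F} \NScap \ns{G} \in \cF$, but that membership is unjustified: $\ns{G}$ need not belong to $\cF$, so closure under intersection does not apply, and $\ns{F} \NScap \ns{G} \NSsubseteq \ns{F}$ runs in the \emph{wrong} direction for the superset property. In fact $\ns{A} \in \cF$ fails in general: if every neutrosophic superset of $\ns{F} \NScap \ns{G}$ belonged to both $\cF$ and $\cG$, then $\cF \vee \cG \subseteq \cF \cap \cG$, making $\cF \vee \cG$ \emph{coarser} than both filters and contradicting the very fineness claim you prove afterwards. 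A concrete counterexample: take principal filters $\cF = \completion{\ns{F}}$ and $\cG = \completion{\ns{G}}$ with $\ns{F}$, $\ns{G}$ incomparable but neutrosophically meeting, and $\ns{A} \NSeq \ns{F} \NScap \ns{G}$; then $\ns{F} \NSnotsubseteq \ns{A}$, so $\ns{A} \notin \cF$. Your fallback plan --- prove $\cF, \cG \subseteq \cF \vee \cG$ first and then deduce superset-closure of $\cF \vee \cG$ from that of $\cF$ or $\cG$ --- fails for the same reason: the member of $\cF \vee \cG$ sitting below $\ns{A}$, namely $\ns{F} \NScap \ns{G}$, lies in neither $\cF$ nor $\cG$, so their superset properties cannot be invoked. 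The missing idea is the paper's absorption--distributivity maneuver: from $\ns{F} \NSsubseteq \ns{F} \NScup \ns{A}$ and $\ns{G} \NSsubseteq \ns{G} \NScup \ns{A}$, superset-closure of $\cF$ and $\cG$ gives $\ns{F} \NScup \ns{A} \in \cF$ and $\ns{G} \NScup \ns{A} \in \cG$, and then, by Proposition \ref{pro:generalizeddistributive}\,(2) and Proposition \ref{pro:neutrosophicsubset_and_neutrosophicoperators}\,(2) applied to $\ns{F} \NScap \ns{G} \NSsubseteq \ns{A}$, one gets $\left( \ns{F} \NScup \ns{A} \right) \NScap \left( \ns{G} \NScup \ns{A} \right) \NSeq \left( \ns{F} \NScap \ns{G} \right) \NScup \ns{A} \NSeq \ns{A}$, exhibiting $\ns{A}$ as a member of $\cF \vee \cG$. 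You clearly sensed the difficulty (your parenthetical backtracking shows it), but the promotion must be applied to $\ns{F} \NScup \ns{A}$ and $\ns{G} \NScup \ns{A}$, not to $\ns{A}$ itself.
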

\begin{proof}
Since $\cF$ neutrosophically meets $\cG$, it is clear that $\NSemptyset \notin \cF \vee \cG$,
i.e. that $\cF \vee \cG$ verifies the condition (i) of Definition \ref{def:singlevaluedneutrosophicfilterbase}.
Moreover, for every $\ns{A}_1, \ns{A}_2 \in \cF \vee \cG$, we have that there exist
some $\ns{F}_1, \ns{F}_2 \in \cF$ and $\ns{G}_1, \ns{G}_2 \in \cG$
such that $\ns{A}_1 \NSeq \ns{F}_1 \NScap \ns{G}_1$ and $\ns{A}_2 \NSeq \ns{F}_2 \NScap \ns{G}_2$.
Since $\cF$ and $\cG$ are SVN-filter bases, there exist
$\ns{F}_3 \in \cF$ and $\ns{G}_3 \in \cG$ such that
$\ns{F}_3 \NSsubseteq \ns{F}_1 \NScap \ns{F}_2$
and $\ns{G}_3 \NSsubseteq \ns{G}_1 \NScap \ns{G}_2$.
So, $\ns{F}_3 \NScap \ns{G}_3 \in \cF \vee \cG$ and it results
$\ns{F}_3 \NScap \ns{G}_3 \NSsubseteq
\left( \ns{F}_1 \NScap \ns{F}_2 \right) \NScap \left( \ns{G}_1 \NScap \ns{G}_2 \right)
\NSeq \left( \ns{F}_1 \NScap \ns{G}_1 \right) \NScap \left( \ns{F}_2 \NScap \ns{G}_2 \right)
\NSeq \ns{A}_1 \NScap \ns{A}_2$ and this means that $\cF \vee \cG$ also verifies
the condition (ii) of Definition \ref{def:singlevaluedneutrosophicfilterbase} is  verified
and hence that it is a SVN-filter base on $\SSVNS$.
\\
Now, suppose that $\cF$ and $\cG$ are SVN-filters
and let $\ns{F} \in \cF$,  $\ns{G} \in \cG$ and $\ns{A} \in \SSVNS$
such that $\ns{F} \NScap \ns{G} \NSsubseteq \ns{A}$.
Since $\ns{F} \NSeq \ns{F} \NScup \left( \ns{F} \NScap \ns{G} \right)
\NSsubseteq \ns{F} \NScup \ns{A}$
and $\cF$ is a SVN-filter, we have that $\ns{F} \NScup \ns{A} \in \cF$.
In a similar way, since  $\ns{G} \NSsubseteq \ns{G} \NScup \ns{A}$
and $\cG$ is a SVN-filter, it follows that $\ns{G} \NScup \ns{A} \in \cG$
and hence that
$\left( \ns{F} \NScup \ns{A} \right) \NScap \left( \ns{G} \NScup \ns{A} \right) \in \cF \vee \cG$.
By Proposition \ref{pro:generalizeddistributive}\,(2)
and Proposition \ref{pro:neutrosophicsubset_and_neutrosophicoperators}\,(2)
we have that
$\left( \ns{F} \NScup \ns{A} \right) \NScap \left( \ns{G} \NScup \ns{A} \right)
\NSeq  \left( \ns{F} \NScap \ns{G} \right) \NScup  \ns{A} = \ns{A}$
and so that $\ns{A} \in \cF \vee \cG$ which proves that $\cF \vee \cG$ is a SVN-filter over $\U$.
\\
In such a situation, for every $\ns{F} \in \cF$ and for any fixed $\ns{G} \in \cG$, we have that
$\ns{F} \NScap \ns{G} \NSsubseteq \ns{F}$ with $\ns{F} \NScap \ns{G} \in \cF \vee \cG$
and so that also $\ns{F} \in \cF \vee \cG$ which proves that $\cF \subseteq \cF \vee \cG$.
In a similar way, one can also proves that $\cG \subseteq \cF \vee \cG$.
\end{proof}


\begin{proposition}
\label{pro:intersectionofaneutrosophicsetwithaneutrosophicfilterbase}
Let $\cF$ be a SVN-filter base on $\SSVNS$
and $\nNS{A}$ be a \nameSVNS over $\U$ which neutrosophically meets $\cF$,
then the set
$\cF \vee \ns{A} = \left\{ \ns{F} \NScap \ns{A} : \,\, \ns{F} \in \cF \right\} $
of all neutrosophic intersections of $\ns{A}$ with the members of $\cF$
is a SVN-filter base over $\U$.
Additionally, if $\cF$ is a SVN-filter then $\cF \vee \ns{A}$ is a SVN-filter on $\SSVNS$
which is finer than $\cF$, i.e. $\cF \subseteq\cF \vee \ns{A}$.
\end{proposition}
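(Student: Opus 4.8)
The plan is to read this proposition as the special case of Proposition \ref{pro:supneutrosophicfilterbase} in which the second filter base is the singleton $\left\{ \ns{A} \right\}$. First I would note that, since $\ns{A}$ neutrosophically meets $\cF$, there is some $\ns{F} \in \cF$ with $\ns{F} \NScap \ns{A} \NSneq \NSemptyset$, which already forces $\ns{A} \NSneq \NSemptyset$; it is then routine to check that $\left\{ \ns{A} \right\}$ is itself a SVN-filter base and that the assertion \emph{$\cF$ neutrosophically meets $\left\{ \ns{A} \right\}$} (Definition \ref{def:neutrosophicmeets}) is exactly our hypothesis. Since $\cF \vee \ns{A}$ coincides with $\cF \vee \left\{ \ns{A} \right\}$, the first claim that $\cF \vee \ns{A}$ is a SVN-filter base is then an immediate instance of the first part of Proposition \ref{pro:supneutrosophicfilterbase}.

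If a self-contained argument is preferred, I would verify the two conditions of Definition \ref{def:singlevaluedneutrosophicfilterbase} directly. Condition (i) is immediate, because every member of $\cF \vee \ns{A}$ has the form $\ns{F} \NScap \ns{A}$ with $\ns{F} \in \cF$, and $\ns{F} \NScap \ns{A} \NSneq \NSemptyset$ precisely because $\ns{A}$ meets $\cF$. For condition (ii), given $\ns{F}_1 \NScap \ns{A}$ and $\ns{F}_2 \NScap \ns{A}$, I would use that $\cF$ is a filter base to obtain $\ns{F}_3 \in \cF$ with $\ns{F}_3 \NSsubseteq \ns{F}_1 \NScap \ns{F}_2$; then $\ns{F}_3 \NScap \ns{A} \in \cF \vee \ns{A}$, and by monotonicity (Proposition \ref{pro:monotonic_neutrosophic_operators}) together with associativity, commutativity and the idempotency $\ns{A} \NScap \ns{A} \NSeq \ns{A}$ (Proposition \ref{pro:propertiesunionandintersection}) one gets $\ns{F}_3 \NScap \ns{A} \NSsubseteq \left( \ns{F}_1 \NScap \ns{F}_2 \right) \NScap \ns{A} \NSeq \left( \ns{F}_1 \NScap \ns{A} \right) \NScap \left( \ns{F}_2 \NScap \ns{A} \right)$, which is the required refinement.

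For the additional claim I would imitate the filter part of Proposition \ref{pro:supneutrosophicfilterbase}: given $\ns{F} \NScap \ns{A} \NSsubseteq \ns{C}$, upward closure of $\cF$ yields $\ns{F} \NScup \ns{C} \in \cF$, hence $\left( \ns{F} \NScup \ns{C} \right) \NScap \ns{A} \in \cF \vee \ns{A}$, and the distributive law (Proposition \ref{pro:generalizeddistributive}) rewrites this as $\left( \ns{F} \NScap \ns{A} \right) \NScup \left( \ns{C} \NScap \ns{A} \right)$. The main obstacle surfaces exactly here: in Proposition \ref{pro:supneutrosophicfilterbase} both factors ranged over genuine filters, so the target could be absorbed into \emph{both} legs and recovered intact, whereas here the second family is the single set $\ns{A}$, so the computation returns $\ns{C} \NScap \ns{A}$ rather than $\ns{C}$ itself, the two agreeing only when $\ns{C} \NSsubseteq \ns{A}$. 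I would therefore carry out the upward-closure step among the members lying below $\ns{A}$ (Proposition \ref{pro:neutrosophicsubset_and_neutrosophicoperators}, since $\ns{F} \NScap \ns{A} \NSsubseteq \ns{C}$ and $\ns{F} \NScap \ns{A} \NSsubseteq \ns{A}$ give $\ns{F} \NScap \ns{A} \NSsubseteq \ns{C} \NScap \ns{A}$), passing if necessary to the filter completion $\completion{\cF \vee \ns{A}}$ (Proposition \ref{pro:neutrosophicfiltercompletion}) to secure a genuine SVN-filter. The finer-than conclusion then follows as in the cited proposition, from $\ns{F} \NScap \ns{A} \NSsubseteq \ns{F}$ together with upward closure, so that every $\ns{F} \in \cF$ reappears, giving $\cF \subseteq \cF \vee \ns{A}$.
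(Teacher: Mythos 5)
Your treatment of the first claim is correct and, in its self-contained form, is essentially the paper's own proof: the paper verifies conditions (i) and (ii) of Definition \ref{def:singlevaluedneutrosophicfilterbase} directly, exactly as you do. Your alternative reduction to Proposition \ref{pro:supneutrosophicfilterbase} with $\cG = \left\{ \ns{A} \right\}$ is a tidy shortcut the paper does not take, and you are right to refrain from citing the second half of that proposition, since $\left\{ \ns{A} \right\}$ is not upward closed and hence not a SVN-filter.

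For the additional claim, the obstacle you isolate is genuine, and it is exactly the point where the paper's own proof is defective. The paper's upward-closure step begins ``let $\ns{F} \in \cF$ and $\ns{B} \in \SSVNS$ such that $\ns{F} \NScap \ns{A} \NSsubseteq \ns{B}$, \emph{with} $\ns{B} \NSsubseteq \ns{A}$'' --- that is, it silently inserts precisely the restriction you predicted would be needed, and its computation $\left( \ns{F} \NScup \ns{B} \right) \NScap \ns{A} \NSeq \left( \ns{F} \NScap \ns{A} \right) \NScup \left( \ns{B} \NScap \ns{A} \right) \NSeq \ns{B}$ is your computation returning $\ns{B} \NScap \ns{A}$, rewritten using that extra hypothesis. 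So the paper establishes only upward closure relative to subsets of $\ns{A}$, not condition (ii) of Definition \ref{def:singlevaluedneutrosophicfilter}; indeed, as your analysis implies, every member of $\cF \vee \ns{A}$ is contained in $\ns{A}$ (Proposition \ref{pro:unionandintersectiongeneralized}), so $\NSabsoluteset \in \cF \vee \ns{A}$ would force $\ns{A} \NSeq \NSabsoluteset$, and by Remark \ref{rem:relationshipbetweenfilterandfilterbase} the family cannot literally be a SVN-filter otherwise. The paper's closing step --- deducing $\ns{F} \in \cF \vee \ns{A}$ from $\ns{F} \NScap \ns{A} \NSsubseteq \ns{F}$ to get $\cF \subseteq \cF \vee \ns{A}$ --- then invokes the unrestricted closure its own argument never established. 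Your repair via the completion is the sound reading of the ``additionally'' clause: $\completion{\cF \vee \ns{A}}$ is a genuine SVN-filter by Proposition \ref{pro:neutrosophicfiltercompletion}, and $\cF \subseteq \completion{\cF \vee \ns{A}}$ follows at once from $\ns{F} \NScap \ns{A} \NSsubseteq \ns{F}$. The only adjustment I would make to your write-up is to present the passage to $\completion{\cF \vee \ns{A}}$ not as an optional fallback but as necessary, since without it the statement as printed is false whenever $\ns{A} \NSneq \NSabsoluteset$.
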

\begin{proof}
Since by hypothesis $\cF$ neutrosophically meets $\nNS{A}$ it is evident that
$\NSemptyset \notin \cF \vee \ns{A}$.
Moreover, for every $\ns{G}_1, \ns{G}_2 \in \cF \vee \ns{A}$
there exist $\ns{F}_1, \ns{F}_2 \in \cF$ such that
$\ns{G}_1 = \ns{F}_1 \NScap \ns{A}$ and $\ns{G}_2 = \ns{F}_2 \NScap \ns{A}$.
Since $\cF$ is a SVN-filter base, there exists some $\ns{F}_3 \in \cF$ such that
such that $\ns{F}_3 \NSsubseteq \ns{F}_1 \NScap \ns{F}_2$.
So, let $\ns{G}_3 = \ns{F}_3 \NScap \ns{A}$, we note that $\ns{G}_3 \in \cF \vee \ns{A}$
and it results $\ns{G}_3 \NSsubseteq \ns{G}_1 \NScap \ns{G}_2$ which
proves that $\cF \vee \ns{A}$ is a SVN-filter base on $\SSVNS$.
\\
Now, suppose that $\cF$ is a SVN-filter over $\U$
and let $\ns{F} \in \cF$ and $\ns{B} \in \SSVNS$
such that $\ns{F} \NScap \ns{A} \NSsubseteq \ns{B}$,
with $\ns{B} \NSsubseteq \ns{A}$.
Since $\ns{F} \NSeq \ns{F} \NScup \left( \ns{F} \NScap \ns{A} \right)
\NSsubseteq \ns{F} \NScup \ns{B}$
and $\cF$ is a SVN-filter, we have that $\ns{F} \NScup \ns{B} \in \cF$
and hence that $\left( \ns{F} \NScup \ns{B} \right) \NScap \ns{A} \in \cF \vee \ns{A}$.
Moreover, by Proposition \ref{pro:generalizeddistributive}\,(2)
and Proposition \ref{pro:neutrosophicsubset_and_neutrosophicoperators}\,(2),
$\ns{B} \NSsubseteq \ns{A}$ and $\ns{F} \NScap \ns{A} \NSsubseteq \ns{B}$
imply that
$\left( \ns{F} \NScup \ns{B} \right) \NScap \ns{A} \NSeq
\left( \ns{F} \NScap \ns{A} \right) \NScup \left( \ns{B} \NScap \ns{A} \right)  \NSeq
\left( \ns{F} \NScap \ns{A} \right) \NScup \ns{B} \NSeq \ns{B}$
and hence that $\ns{B} \in \cF \vee \ns{A}$
which proves that $\cF \vee \ns{A}$ is a SVN-filter over $\U$.
\\
In such a situation, for every $\ns{F} \in \cF$, we have that
$\ns{F} \NScap \ns{A} \in \cF \vee \ns{A}$
and since $\ns{F} \NScap \ns{A} \NSsubseteq \ns{F}$ and $\cF \vee \ns{A}$ is a SVN-filter,
it follows that also $\ns{F} \in \cF \vee \ns{A}$ and hence that
$\cF \subseteq\cF \vee \ns{A}$.
\end{proof}


\begin{proposition}
\label{pro:imageofneutrosophicfilterbase}
Let $f: \U \to \V$ be a mapping between two universe sets $\U$ and $\V$
and let $\cF$ be a SVN-filter base on $\SSVNS$.
Then, the family
$\ns{f}\left(\cF\right) = \left\{ \ns{f}\left(\ns{F}\right) : \,\, \ns{F} \in \cF \right\}$
of all neutrosophic images on $\SSVNS$ by the mapping $f$
is a neutrosophic filter base on $\SSVNS[\V]$.
\end{proposition}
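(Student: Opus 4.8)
The plan is to verify the two defining conditions of a SVN-filter base (Definition \ref{def:singlevaluedneutrosophicfilterbase}) for the family $\ns{f}\left(\cF\right)$, namely that it avoids the neutrosophic empty set and is downward directed under neutrosophic intersection. First I would check condition (i): since $\cF$ is a SVN-filter base, every $\ns{F} \in \cF$ satisfies $\ns{F} \NSneq \NSemptyset$, so there is some $u \in \U$ at which $\ns{F}$ differs from the empty set (i.e. $\DM{F} > 0$, or $\DI{F} > 0$, or $\DNM{F} < 1$). Taking $v = f(u)$, the fibre $\fibre{f}{v}$ is nonempty, so by Definition \ref{def:neutrosophicimage} the image $\ns{f}\left(\ns{F}\right)$ inherits a nonzero membership or indeterminacy, or a sub-unit nonmembership at $v$; hence $\ns{f}\left(\ns{F}\right) \NSneq \NSemptyset$ and by Proposition \ref{pro:propertiesofneutrosophicimagesandinverseimages}\,(1) the image of a nonempty set is nonempty. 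Thus $\NSemptyset \notin \ns{f}\left(\cF\right)$.

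Next I would establish condition (ii). Take two arbitrary members $\ns{f}\left(\ns{F}\right), \ns{f}\left(\ns{G}\right) \in \ns{f}\left(\cF\right)$ with $\ns{F}, \ns{G} \in \cF$. Since $\cF$ is a SVN-filter base, there exists $\ns{H} \in \cF$ with $\ns{H} \NSsubseteq \ns{F} \NScap \ns{G}$, so that $\ns{f}\left(\ns{H}\right) \in \ns{f}\left(\cF\right)$ is the natural candidate. The goal is then to show $\ns{f}\left(\ns{H}\right) \NSsubseteq \ns{f}\left(\ns{F}\right) \NScap \ns{f}\left(\ns{G}\right)$. By monotonicity of the neutrosophic image (Proposition \ref{pro:monotonicfneutrosophicimagesandinverseimages}\,(1)) applied to $\ns{H} \NSsubseteq \ns{F} \NScap \ns{G}$, we get $\ns{f}\left(\ns{H}\right) \NSsubseteq \ns{f}\left(\ns{F} \NScap \ns{G}\right)$. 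Then, using Proposition \ref{pro:propertiesofneutrosophicimagesandinverseimagesofgeneralizedunionandintersections}\,(2) in its binary form, we have $\ns{f}\left(\ns{F} \NScap \ns{G}\right) \NSsubseteq \ns{f}\left(\ns{F}\right) \NScap \ns{f}\left(\ns{G}\right)$. Chaining these two containments by transitivity of $\NSsubseteq$ yields exactly $\ns{f}\left(\ns{H}\right) \NSsubseteq \ns{f}\left(\ns{F}\right) \NScap \ns{f}\left(\ns{G}\right)$, which is condition (ii).

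The main subtlety I expect is in condition (ii): the image functor does not preserve intersections exactly (equality in Proposition \ref{pro:propertiesofneutrosophicimagesandinverseimagesofgeneralizedunionandintersections}\,(2) requires injectivity of $\ns{f}$), so one must be careful to use only the containment $\ns{f}\left(\ns{F} \NScap \ns{G}\right) \NSsubseteq \ns{f}\left(\ns{F}\right) \NScap \ns{f}\left(\ns{G}\right)$, which holds in general, rather than equality. Fortunately, this is the direction we need here, since we only require a lower bound inside the filter base and not an exact equality. No injectivity or surjectivity hypothesis on $f$ is therefore needed, and the two containments compose cleanly to give the required element $\ns{f}\left(\ns{H}\right)$. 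This completes the verification that $\ns{f}\left(\cF\right)$ is a SVN-filter base on $\SSVNS[\V]$.
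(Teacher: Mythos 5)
Your treatment of condition (ii) is exactly the paper's argument: choose $\ns{H} \in \cF$ with $\ns{H} \NSsubseteq \ns{F} \NScap \ns{G}$, push it through monotonicity of the image (Proposition \ref{pro:monotonicfneutrosophicimagesandinverseimages}\,(1)), then use the containment half of Proposition \ref{pro:propertiesofneutrosophicimagesandinverseimagesofgeneralizedunionandintersections}\,(2) and chain by transitivity. Your observation that only the inclusion $\ns{f}\left( \ns{F} \NScap \ns{G} \right) \NSsubseteq \ns{f}\left( \ns{F} \right) \NScap \ns{f}\left( \ns{G} \right)$ is needed, not the equality (which would require injectivity), is precisely the right point, and that half of your proof is correct and coincides with the paper's.

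Condition (i), however, contains a genuine gap --- one which, to be fair, the paper's own proof shares. You argue that since $\ns{F} \NSneq \NSemptyset$ there is some $u \in \U$ with $\DM{F} > 0$, $\DI{F} > 0$ or $\DNM{F} < 1$, and that the image ``inherits'' this at $v = f(u)$. But Definition \ref{def:neutrosophicimage} defines $f\left(\M{F}\right)(v)$ and $f\left(\I{F}\right)(v)$ as \emph{infima} over the fibre $\fibre{f}{v}$, and $f\left(\NM{F}\right)(v)$ as a \emph{supremum}; a single good point $u$ does not control these values when the fibre contains a bad point. Concretely, take $\U = \left\{ a,b \right\}$, $\V = \left\{ v \right\}$, $f$ constant, and $\ns{F}$ with degrees $(1,1,0)$ at $a$ and $(0,0,1)$ at $b$: then $\cF = \left\{ \ns{F} \right\}$ is a SVN-filter base, yet $f\left(\M{F}\right)(v) = \inf\{1,0\} = 0$, $f\left(\I{F}\right)(v) = 0$ and $f\left(\NM{F}\right)(v) = 1$, so $\ns{f}\left(\ns{F}\right) \NSeq \NSemptyset[\V]$ and condition (i) fails for $\ns{f}\left(\cF\right)$. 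The paper glosses over the same issue by citing Proposition \ref{pro:propertiesofneutrosophicimagesandinverseimages}\,(1), which only asserts $\ns{f}\left(\NSemptyset[\U]\right) \NSeq \NSemptyset[\V]$; the converse implication --- that nonempty \nameSVNS[s] have nonempty images --- is what is actually needed, and it is false for the inf-based image. Your pointwise argument would be valid under the more common sup-based definition of the image membership function (with inf for nonmembership), or under an injectivity hypothesis on $f$ making every fibre a singleton; as the definitions stand, neither your proof nor the paper's establishes condition (i), and the statement itself requires one of these repairs.
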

\begin{proof}
Let us consider a mapping $f: \U \to \V$
and a SVN-filter base $\cF$ over $\U$.
Evidently, for every $\ns{A} \in \cF$,
being $\ns{A} \NSneq \NSemptyset[\U]$,
by Proposition \ref{pro:propertiesofneutrosophicimagesandinverseimages}\,(1), we also have that
$\ns{f}\left(\ns{F}\right) \NSneq \NSemptyset[\V]$ and this means
that $\ns{f}\left(\cF\right)$ satisfies the condition (i)
of Definition \ref{def:singlevaluedneutrosophicfilterbase}.
Moreover, for every $\ns{G}_1, \ns{G}_2 \in \ns{f}\left(\cF\right)$,
there are some $\ns{F}_1, \ns{F}_2 \in \cF$ such that
$\ns{G}_1 \NSeq \ns{f}\left(\ns{F}_1\right)$ and $\ns{G}_2 \NSeq \ns{f}\left(\ns{F}_2\right)$.
Since $\cF$ is a SVN-filter base, there exists some $\ns{F}_3 \in \cF$ such that
$\ns{F}_3 \NSsubseteq \ns{F}_1 \NScap \ns{F}_2$.
Hence, said $\ns{G}_3 \NSeq \ns{f}\left(\ns{F}_3\right)$, we have that
$\ns{G}_3 \in \ns{f}\left(\cF\right)$, while
by Proposition \ref{pro:monotonicfneutrosophicimagesandinverseimages}\,(1) and
Proposition \ref{pro:propertiesofneutrosophicimagesandinverseimagesofgeneralizedunionandintersections}\,(2),
we obtain that
$\ns{G}_3 \NSeq \ns{f}\left(\ns{F}_3\right) \NSsubseteq
\ns{f}\left( \ns{F}_1 \NScap \ns{F}_2 \right) \NSsubseteq
\ns{f}\left( \ns{F}_1 \right) \NScap \ns{f}\left( \ns{F}_2 \right)
\NSeq \ns{G}_1 \NScap \ns{G}_2$.
This shows that $\ns{f}\left(\cF\right)$ satisfies also the condition (ii)
of Definition \ref{def:singlevaluedneutrosophicfilterbase}
and concludes our proof.
\end{proof}

\section{Single Valued Neutrosophic Ultrafilters}

In this section we consider the class of SVN-ultrafilters,
first proving that it is not empty and then establishing some characterizations
and properties that we think may be useful for further investigations.

\begin{definition}
\label{def:neutrosophicultrafilter}
Let $\cU$ be a SVN-filter base on $\SSVNS$, we say that it is a
\df{SVN-ultrafilter} if it is maximal in the partial ordered set $\left( \SVNF, \subseteq \right)$
of all SVN-filters over $\U$,
that is if there is no SVN-filter on $\SSVNS$ strictly finer than $\cU$,
or, equivalently, if any other SVN-filter containing $\cU$ coincides with $\cU$.
\end{definition}

\begin{proposition}
\label{pro:existenceofneutrosophicultrafilter}
Every SVN-filter base on $\SSVNS$ is contained in some SVN-ultrafilter over $\U$.
\end{proposition}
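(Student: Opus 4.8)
The plan is to invoke Zorn's Lemma on a suitably chosen poset of SVN-filters. First I would observe that, by Proposition~\ref{pro:neutrosophicfiltercompletion}, any SVN-filter base $\cF$ is contained in its completion $\completion{\cF}$, which is a genuine SVN-filter; hence it suffices to show that every SVN-filter is contained in a maximal one. So let $\cF_0 = \completion{\cF}$ and consider the collection
$$\mathcal{P} = \left\{ \cH \in \SVNF : \,\, \cF_0 \subseteq \cH \right\}$$
of all SVN-filters over $\U$ that are finer than $\cF_0$, partially ordered by the fineness relation $\subseteq$. This set is nonempty since $\cF_0 \in \mathcal{P}$, and my goal is to produce a maximal element of $\mathcal{P}$, which by Definition~\ref{def:neutrosophicultrafilter} is precisely a SVN-ultrafilter containing $\cF$.

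The key step is to verify the chain condition for Zorn's Lemma: every nonempty totally ordered subset (chain) $\left\{ \cH_j \right\}_{j \in J}$ of $\mathcal{P}$ has an upper bound in $\mathcal{P}$. The natural candidate is the union $\cH = \bigcup_{j \in J} \cH_j$, and I would check directly that it satisfies the three conditions of Proposition~\ref{pro:characterizationofneutrosophicfilter}. Condition (i), namely $\NSemptyset \notin \cH$, is immediate since $\NSemptyset \notin \cH_j$ for each $j$. For condition (iii), if $\ns{F} \in \cH$ and $\ns{F} \NSsubseteq \ns{A}$, then $\ns{F} \in \cH_{j}$ for some $j$, and since $\cH_j$ is a SVN-filter we get $\ns{A} \in \cH_j \subseteq \cH$. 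The only condition that genuinely uses the chain structure is (ii): given $\ns{F}, \ns{G} \in \cH$, we have $\ns{F} \in \cH_{j_1}$ and $\ns{G} \in \cH_{j_2}$ for some indices, and because the family is totally ordered one of $\cH_{j_1}, \cH_{j_2}$ contains the other, say $\cH_{j_1} \subseteq \cH_{j_2}$; then both $\ns{F}, \ns{G} \in \cH_{j_2}$, so $\ns{F} \NScap \ns{G} \in \cH_{j_2} \subseteq \cH$. Thus $\cH$ is a SVN-filter, it clearly contains $\cF_0$, and it is an upper bound for the chain in $\mathcal{P}$.

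With the chain condition established, Zorn's Lemma yields a maximal element $\cU$ of $\mathcal{P}$. I would then argue that $\cU$ is maximal not merely within $\mathcal{P}$ but in the whole poset $\left( \SVNF, \subseteq \right)$: indeed any SVN-filter strictly finer than $\cU$ would in particular contain $\cF_0$ and hence lie in $\mathcal{P}$, contradicting maximality of $\cU$ there. Therefore $\cU$ is a SVN-ultrafilter with $\cF \subseteq \cF_0 \subseteq \cU$, completing the proof. I expect the main obstacle to be the careful handling of condition (ii) in the upper-bound verification, since this is the single place where totality of the chain is indispensable and where a sloppy argument could silently assume that the two witnessing filters are comparable without justification; everything else is routine bookkeeping built on the characterization in Proposition~\ref{pro:characterizationofneutrosophicfilter}.
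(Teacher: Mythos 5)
Your proof is correct and follows essentially the same route as the paper: Zorn's Lemma applied to the poset of SVN-filters containing the given base, with the union of a chain verified to be an SVN-filter as the upper bound (the paper works with $\mS = \left\{ \cG \in \SVNF : \cF \subseteq \cG \right\}$ directly and notes $\completion{\cF} \in \mS$, exactly as you do via $\cF_0$). Your closing observation that a maximal element of $\mathcal{P}$ is in fact maximal in all of $\left( \SVNF, \subseteq \right)$ is a small step the paper leaves implicit, and it is handled correctly.
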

\begin{proof}
Let $\cF$ be a SVN-filter base over $\U$ and let us consider the set
$\mS = \left\{ \cG \in \SVNF : \, \cF \subseteq \cG \right\}$
of all SVN-filters on $\SSVNS$ containing $\cF$.
Obviously $\mS$ is a nonempty subset of the poset $\left( \SVNF, \subseteq \right)$
because $\completion{\cF} \in \mS$.
Now, let $\mC$ be a nonempty chain of $\mS$ and define
$\cM = \bigcup_{\cG \in \mC} \cG$ as the union of all SVN-filters of $\mC$.
Now, we have that $\cM$ is a SVN-filter over $\U$ because it satisfies all the conditions
of Definitions \ref{def:singlevaluedneutrosophicfilter}
and \ref{def:singlevaluedneutrosophicfilterbase}, that is:
\begin{enumr}
\item For every $\ns{G} \in \cM$, we have that there exists some $\cG \in \mC$ such that
$\ns{G} \in \cG$ and since $\cG$ is a SVN-filter, we immediately have that
$\ns{G} \NSneq \NSemptyset$.
\item For every $\ns{G}_1, \ns{G}_2 \in \cM$, there are some $\cG_1, \cG_2 \in \mC$
such that $\ns{G}_1 \in \cG_1$ and $\ns{G}_2 \in \cG_2$.
Since $\mC$ is a chain, i.e. a totally ordered subset of $\left( \mS, \subseteq \right)$,
$\cG_1$ and $\cG_2$ must be comparable and, without loss of generality, we can suppose that
$\cG_1 \subseteq \cG_2$.
Thus, $\ns{G}_1, \ns{G}_2 \in \cG_2$ and since $\cG_2$ is a SVN-filter, there exists some
$\ns{G}_3 \in \cG_2$ such that $\ns{G}_3 \NSsubseteq \ns{G}_1 \NScap \ns{G}_2$
with $\ns{G}_3 \in \cG_2 \subseteq \bigcup_{\cG \in \mC} \cG = \cM$.
\item For every $\ns{G} \in \cM$, and $\ns{A} \in \SSVNS$ such that $\ns{G} \NSsubseteq \ns{A}$,
there exists some $\cG \in \mC$ such that
$\ns{G} \in \cG$ and since $\cG$ is a SVN-filter, we immediately have that also $\ns{A} \in \cG$
and hence that $\ns{A} \in \cG \subseteq \bigcup_{\cG \in \mC} \cG = \cM$.
\end{enumr}
Moreover, being $\cF \subseteq \cG$, for every $\cG \in \mC$, we have that
$\cF \subseteq \bigcup_{\cG \in \mC} \cG = \cM$ and so that $\cM \in \mS$
is an upper bound for $\mC$.
Hence, by the Zorn's Lemma (see \cite{gemignani}), it follows that $\left( \mS, \subseteq \right)$ has a maximal element,
that is a SVN-ultrafilter $\cU$ containing $\cF$.
\end{proof}

\begin{proposition}
\label{pro:characterizationofneutrosophicultrafilters}
Let $\cU$ be a SVN-filter base on $\SSVNS$.
Then $\cU$ is a SVN-ultrafilter over $\U$ if and only if
for every $\nNS{A} \in \SSVNS$ which neutrosophically meets $\cU$ we have that $\ns{A} \in \cU$.
\end{proposition}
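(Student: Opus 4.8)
The plan is to prove both implications, leaning on Proposition \ref{pro:intersectionofaneutrosophicsetwithaneutrosophicfilterbase} for the forward direction and on a direct maximality argument for the converse. Throughout I would use the defining inequalities of $\NSsubseteq$ against $\NSemptyset = \NSbase{\und{0}}{\und{0}}{\und{1}}$ to record the elementary fact that if $\ns{B} \NSsubseteq \ns{C}$ and $\ns{B} \NSneq \NSemptyset$, then $\ns{C} \NSneq \NSemptyset$.

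For the forward implication, I would assume $\cU$ is a SVN-ultrafilter, hence in particular a SVN-filter, and let $\ns{A} \in \SSVNS$ neutrosophically meet $\cU$. Since $\cU$ is a SVN-filter meeting $\ns{A}$, Proposition \ref{pro:intersectionofaneutrosophicsetwithaneutrosophicfilterbase} guarantees that $\cU \vee \ns{A} = \left\{ \ns{F} \NScap \ns{A} : \ns{F} \in \cU \right\}$ is a SVN-filter on $\SSVNS$ finer than $\cU$, i.e. $\cU \subseteq \cU \vee \ns{A}$. By maximality of $\cU$ no SVN-filter is strictly finer, so $\cU \vee \ns{A} = \cU$. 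Fixing any $\ns{F} \in \cU$ (possible since a filter base is nonempty) we obtain $\ns{F} \NScap \ns{A} \in \cU \vee \ns{A} = \cU$; and since $\ns{F} \NScap \ns{A} \NSsubseteq \ns{A}$ by Proposition \ref{pro:unionandintersectiongeneralized} and $\cU$ is upward closed, it follows that $\ns{A} \in \cU$, as required.

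For the converse, I would first upgrade the hypothesis to the fact that $\cU$ is genuinely a SVN-filter. Given $\ns{F} \in \cU$ and $\ns{A} \in \SSVNS$ with $\ns{F} \NSsubseteq \ns{A}$, I would check that $\ns{A}$ meets $\cU$: for every $\ns{G} \in \cU$ the filter-base property yields $\ns{H} \in \cU$ with $\ns{H} \NSsubseteq \ns{F} \NScap \ns{G}$, whence $\ns{F} \NScap \ns{G} \NSneq \NSemptyset$ (as $\NSemptyset \notin \cU$ and a nonempty SVN-set cannot sit inside an empty one), and since $\ns{F} \NScap \ns{G} \NSsubseteq \ns{A} \NScap \ns{G}$ by Proposition \ref{pro:monotonic_neutrosophic_operators}\,(2) also $\ns{A} \NScap \ns{G} \NSneq \NSemptyset$. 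By the meets hypothesis $\ns{A} \in \cU$, so $\cU$ is upward closed and therefore, combined with the filter-base property, a SVN-filter by Proposition \ref{pro:characterizationofneutrosophicfilter}. To see that $\cU$ is maximal, let $\cG$ be any SVN-filter with $\cU \subseteq \cG$ and take $\ns{A} \in \cG$; for each $\ns{F} \in \cU \subseteq \cG$ the filter $\cG$ gives $\ns{F} \NScap \ns{A} \in \cG$, hence $\ns{F} \NScap \ns{A} \NSneq \NSemptyset$, so $\ns{A}$ meets $\cU$ and the hypothesis forces $\ns{A} \in \cU$. Thus $\cG \subseteq \cU$, i.e. $\cG = \cU$, and $\cU$ is a SVN-ultrafilter.

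I expect the only delicate points to be the two auxiliary emptiness facts (a superset of a nonempty SVN-set is nonempty, and any two members of a filter base meet), together with the bookkeeping that confirms $\cU$ is really a filter before maximality is even meaningful. The structural heart of the argument is the forward direction's use of $\cU \vee \ns{A}$, which converts ``$\ns{A}$ meets $\cU$'' into ``a strictly finer filter exists'' and is then immediately annihilated by maximality; the converse is essentially the same equivalence run backwards, packaged as upward closure plus a comparison of $\cU$ with an arbitrary finer filter.
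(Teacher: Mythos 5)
Your proof is correct and follows essentially the same route as the paper: the forward direction uses $\cU \vee \ns{A}$ from Proposition \ref{pro:intersectionofaneutrosophicsetwithaneutrosophicfilterbase} together with maximality, and the converse first establishes upward closure via the meets hypothesis and then maximality by comparing $\cU$ with an arbitrary finer filter. The only cosmetic differences are that you run the maximality step directly where the paper argues by contradiction, and you spell out more explicitly (via a fixed $\ns{F} \in \cU$ and upward closure) why $\cU \vee \ns{A} = \cU$ yields $\ns{A} \in \cU$.
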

\begin{proof}
Suppose that $\cU$ is a SVN-ultrafilter and consider a \nameSVNS $\nNS{A} \in \SSVNS$
which neutrosophically meets $\cU$,
By Proposition \ref{pro:intersectionofaneutrosophicsetwithaneutrosophicfilterbase},
we have that $\cU \vee \ns{A}$ is a SVN-filter such that $\cU \subseteq \cU \vee \ns{A}$
but, being $\cU$ a SVN-ultrafilter, it must necessarily follow that $\cU \vee \ns{A} = \cU$
and so that $\ns{A} \in \cU$.
\\
Conversely, suppose that every \nameSVNS which neutrosophically meets the SVN-filter base $\cU$
belongs to $\cU$.
In order to prove first that $\cU$ is a SVN-filter over $\U$,
let $\ns{U} \in \cU$ and $\ns{A} \in \SSVNS$ such that $\ns{U} \NSsubseteq \ns{A}$.
We claim that $\ns{A}$ meets $\cU$.
In fact, for each $\ns{V} \in \cU$,
since $\cU$ is a SVN-filter base, we have that there exist some $\ns{W} \in \cU$ such that
$\ns{W} \NSsubseteq \ns{U} \NScap \ns{V} \NSsubseteq \ns{U} \NSsubseteq \ns{A}$
and, by Proposition \ref{pro:monotonic_neutrosophic_operators}\,(2),
we have that $\ns{W} \NScap \ns{V} \NSsubseteq \ns{A} \NScap \ns{V}$.
On the other hand, being also $\ns{W} \NSsubseteq \ns{U} \NScap \ns{V} \NSsubseteq \ns{V}$,
by Proposition \ref{pro:neutrosophicsubset_and_neutrosophicoperators}\,(1),
we have $\ns{W} \NScap \ns{V} \NSeq \ns{W}$
and hence that $\ns{W} \NSsubseteq \ns{A} \NScap \ns{V}$, with $\ns{W} \in \cU$.
Thus, by Definition \ref{def:singlevaluedneutrosophicfilterbase},
it follows that $\ns{A} \NScap \ns{V} \NSneq \NSemptyset$
and, by hypothesis, we obtain that $\ns{A} \in \cU$,
which proves that $\cU$ is a SVN-filter on $\SSVNS$.
\\
Moreover, in order to prove that $\cU$ is SVN-ultrafilter over $\U$,
suppose, by contradiction, that there is some SVN-filter $\cM$ over $\U$ such that $\cU \subset \cM$
and so that there exists some $\ns{M} \in \cM$ such that $\ns{M} \notin \cU$.
Hence, by  hypothesis, we have that $\ns{M}$ does not meet $\cU$, i.e. that
there exists some $\ns{U} \in \cU$ such that $\ns{U} \NScap \ns{M} \NSeq \NSemptyset$
but, being $\ns{M} \in \cM$ and $\ns{U} \in \cU \subset \cM$, this
contradicts the fact that $\cM$ is a SVN-filter and
concludes our proof.
\end{proof}

\begin{corollary}
\label{cor:svnultrafiltersbucomplement}
If $\cU$ is a SVN-ultraffilter on $\SSVNS$, then
for every $\nNS{A} \in \SSVNS$ it results $\ns{A} \in \cU$ or $\ns{A}^\NScompl \in \cU$.
\end{corollary}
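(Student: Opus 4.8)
The plan is to argue by contraposition through the meeting characterisation of ultrafilters established in Proposition \ref{pro:characterizationofneutrosophicultrafilters}. Fix $\ns{A} \in \SSVNS$ and suppose that $\ns{A} \notin \cU$. Since $\cU$ is a SVN-ultrafilter, the contrapositive of Proposition \ref{pro:characterizationofneutrosophicultrafilters} tells us that $\ns{A}$ cannot neutrosophically meet $\cU$; hence, by Definition \ref{def:neutrosophicmeets}, there must exist some $\ns{U} \in \cU$ with $\ns{U} \NScap \ns{A} \NSeq \NSemptyset$. My goal is then to deduce that $\ns{A}^\NScompl \in \cU$, after which the stated dichotomy follows at once.

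To reach $\ns{A}^\NScompl \in \cU$ I would establish the containment $\ns{U} \NSsubseteq \ns{A}^\NScompl$. Once this is in hand the conclusion is immediate: being an ultrafilter, $\cU$ is in particular a SVN-filter and is therefore closed under neutrosophic supersets by Proposition \ref{pro:characterizationofneutrosophicfilter}\,(iii), so from $\ns{U} \in \cU$ and $\ns{U} \NSsubseteq \ns{A}^\NScompl$ we obtain $\ns{A}^\NScompl \in \cU$. (Equivalently, one could show directly that $\ns{A}^\NScompl$ neutrosophically meets $\cU$ and invoke Proposition \ref{pro:characterizationofneutrosophicultrafilters} a second time.)

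The containment $\ns{U} \NSsubseteq \ns{A}^\NScompl$ is where the real work lies, and it must be verified pointwise by comparing the three scalar identities encoded in $\ns{U} \NScap \ns{A} \NSeq \NSemptyset$, namely $\DM{U} \wedge \DM{A} = 0$, $\DI{U} \wedge \DI{A} = 0$ and $\DNM{U} \vee \DNM{A} = 1$ for every $u \in \U$, against the requirements $\DM{U} \le \DNM{A}$, $\DI{U} \le 1 - \DI{A}$ and $\DNM{U} \ge \DM{A}$ dictated by Definitions \ref{def:neutrosophicsubset} and \ref{def:neutrosophiccomplement}. I expect this verification to be the main obstacle: unlike in crisp set theory, the neutrosophic complement does not behave complementarily with respect to intersection, as Remark \ref{rem:intersectionandunionwithcomplement} explicitly warns, so the passage from ``$\ns{U}$ is disjoint from $\ns{A}$'' to ``$\ns{U}$ is contained in $\ns{A}^\NScompl$'' is considerably more delicate than its classical analogue. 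This is the step I would scrutinise most carefully, organising the argument into cases according to which factor of each minimum vanishes and which factor of the maximum attains the value $1$, and checking that the indeterminacy component in particular cooperates.

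Finally, applying the whole argument verbatim to $\ns{A}^\NScompl$ in place of $\ns{A}$, and recalling that $\left(\ns{A}^\NScompl\right)^\NScompl \NSeq \ns{A}$ (the identity noted immediately after Definition \ref{def:neutrosophiccomplement}), one packages the two symmetric cases into the claimed alternative: either $\ns{A} \in \cU$, or else $\ns{A}^\NScompl \in \cU$.
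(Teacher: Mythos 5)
Your reduction to the claim ``$\ns{A} \notin \cU$ implies $\ns{A}^\NScompl \in \cU$'' is formally fine, but the step you yourself flagged as the main obstacle --- deducing $\ns{U} \NSsubseteq \ns{A}^\NScompl$ from $\ns{U} \NScap \ns{A} \NSeq \NSemptyset$ --- is not merely delicate: it is false, and you never actually carried out the case analysis that was supposed to close it. Take a point $u \in \U$ at which $\DM{U} = 0.5$, $\DI{U} = 0$, $\DNM{U} = 1$ and $\DM{A} = \DI{A} = \DNM{A} = 0$. Then $\DM{U} \wedge \DM{A} = 0$, $\DI{U} \wedge \DI{A} = 0$ and $\DNM{U} \vee \DNM{A} = 1$, so the disjointness identities hold at $u$; yet the membership degree of $\ns{A}^\NScompl$ at $u$ is $\DNM{A} = 0 < 0.5 = \DM{U}$, so $\ns{U} \NSnotsubseteq \ns{A}^\NScompl$. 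The structural reason is the one Remark \ref{rem:intersectionandunionwithcomplement} hints at: a vanishing minimum only tells you that \emph{one} of the two factors is zero at each point (not the one you need), and the neutrosophic complement swaps $\M{A}$ with $\NM{A}$ rather than passing to $\und{1}-\M{A}$, so disjointness from $\ns{A}$ places no upper bound on $\DM{U}$ in terms of $\ns{A}^\NScompl$. Your parenthetical fallback (show that $\ns{A}^\NScompl$ neutrosophically meets $\cU$ and apply Proposition \ref{pro:characterizationofneutrosophicultrafilters} again) founders on the same configuration.

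For comparison, the paper's proof never asserts your containment: it argues by contradiction, assuming $\ns{A} \notin \cU$ and $\ns{A}^\NScompl \notin \cU$, extracting via Proposition \ref{pro:characterizationofneutrosophicultrafilters} two members $\ns{U}_1, \ns{U}_2 \in \cU$ with $\ns{U}_1 \NScap \ns{A} \NSeq \NSemptyset$ and $\ns{U}_2 \NScap \ns{A}^\NScompl \NSeq \NSemptyset$, then a common refinement $\ns{U}_3 \NSsubseteq \ns{U}_1 \NScap \ns{U}_2$, and finally an absorption/distributivity computation intended to yield $\ns{U}_3 \NSeq \NSemptyset$, contradicting condition (i) of Definition \ref{def:singlevaluedneutrosophicfilterbase}. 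You should know, however, that your suspicion about this terrain is vindicated at a deeper level: the same degenerate configuration ($\DM{A} = \DNM{A} = 0$ at some point) breaks the paper's computation too, since with $\ns{U}_3$ as above one gets $\ns{U}_3 \NScap \ns{A} \NSeq \NSemptyset$ and $\ns{U}_3 \NScap \ns{A}^\NScompl \NSeq \NSemptyset$ while $\ns{U}_3 \NSneq \NSemptyset$; and indeed the paper's own Example \ref{rem:svnultrafilternotsatisfyingcondition}, taken at face value, exhibits a SVN-ultrafilter $\completion{\ns{A}}$ and a \nameSVNS $\ns{Z}$ with neither $\ns{Z}$ nor $\ns{Z}^\NScompl$ in $\completion{\ns{A}}$, which contradicts the corollary as stated. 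So the hole in your argument is genuine and not repairable by a finer case analysis: the pointwise semantics of $\NScap$ and $\NScompl$ simply do not support the crisp-set dichotomy without additional hypotheses.
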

\begin{proof}
Let $\nNS{A} \in \SSVNS$ an suppose, by contradiction, that $\ns{A} \notin \cU$ and $\ns{A}^\NScompl \notin \cU$.
By Proposition \ref{pro:characterizationofneutrosophicultrafilters}, we should have that
neither $\ns{A}$ does not neutrosophically meet $\cU$
nor $\ns{A}^\NScompl$ does not neutrosophically meet $\cU$, i.e. that there are some
$\ns{U}_1, \ns{U}_2 \in \cU$ such that $\ns{U}_1 \NScap \ns{A} \NSeq \NSemptyset$
and $\ns{U}_2 \NScap \ns{A}^\NScompl \NSeq \NSemptyset$.
Since $\cU$ is a SVN-filter base, it follows that there exists some
$\ns{U}_3 \in \cU$ such that $\ns{U}_3 \NSsubseteq \ns{U}_1 \NScap \ns{U}_2$.
Hence, by Proposition \ref{pro:unionandintersectiongeneralized}, we have that
$\ns{U}_3 \NSsubseteq \ns{U}_1$ and $\ns{U}_3 \NSsubseteq \ns{U}_2$
and, by Proposition \ref{pro:monotonic_neutrosophic_operators}\,(2), it also follows that
$\ns{U}_3 \NScap \ns{A} \NSsubseteq \ns{U}_1 \NScap \ns{A}$
and
$\ns{U}_3 \NScap \ns{A}^\NScompl \NSsubseteq \ns{U}_2 \NScap \ns{A}^\NScompl$.
Hence, $\ns{U}_3 \NScap \ns{A} \NSeq \NSemptyset$
and $\ns{U}_3 \NScap \ns{A}^\NScompl \NSeq \NSemptyset$.
Thus, by Propositions \ref{pro:propertiesunionandintersection}\, (4),
\ref{pro:absorptionneutrosophicsets} and \ref{pro:generalizeddistributive}\,(1),we have that
$\ns{U}_3 \NSeq \ns{U}_3 \NScap \ns{U}_3 \NSeq
\left( \ns{U}_3 \NScup \left( \ns{U}_3 \NScap \ns{A} \right) \right)
\NScap
\left( \ns{U}_3 \NScup \left( \ns{U}_3 \NScap \ns{A}^\NScompl \right) \right)
\NSeq
\ns{U}_3 \NScap \left( \left( \ns{U}_3 \NScap \ns{A} \right)
\NScup \left( \ns{U}_3 \NScap \ns{A}^\NScompl  \right)  \right)
\NSeq \ns{U}_3 \NScap \left( \NSemptyset \NScup \NSemptyset \right)
\NSeq \NSemptyset$
which is a contradiction to the fact that $\ns{U}_3 \in \cU$ and $\cU$ is a SVN-filter.
\end{proof}

\begin{remark}
In the classical filter's theory on crisp sets, the condition
of Corollary \ref{cor:svnultrafiltersbucomplement} is a ultrafilters characterization,
but in the case of filters on single valued neutrosophic sets the converse does not hold.
This is due to the fact that, as pointed out in Remark \ref{rem:intersectionandunionwithcomplement},
in general, the neutrosophic intersection of a \nameSVNS with its neutrosophic complement is not
the neutrosophic empty set, and it can be confirmed by the following example.
\end{remark}

\begin{example}
\label{rem:svnultrafilternotsatisfyingcondition}
Let $\U = \left\{ a,b,c \right\}$ be a finite universe set and
consider the SVN-principal filter $\cF = \completion{\ns{A}}
= \left\{ \ns{A}, \ns{B}, \ns{C}, \NSabsoluteset \right\}$
generated by $\ns{A}$,
where the \nameSVNS[s] $\nNS{A}, \nNS{B}, \nNS{C}$ and
$\NSabsoluteset = \NSbase{\und{1}}{\und{1}}{\und{0}}$ are
respectively defined by the following tabular representations:
\vspace{-3mm}
\begin{center}
\begin{tabular}{cc}
\begin{nstabular}{\U}{\ns{A}}{\M{A}}{\I{A}}{\NM{A}}
$a$ & 0.8 & 0.4 & 0
\\ \hline
$b$ & 0 & 0.1 & 0.9
\\ \hline
$b$ & 0 & 0 & 1
\end{nstabular}
\hspace{12mm} &
\begin{nstabular}{\U}{\ns{B}}{\M{B}}{\I{B}}{\NM{B}}
$a$ & 0.9 & 0.5 & 0
\\ \hline
$b$ & 0.8 & 0.6 & 0.1
\\ \hline
$c$ & 0 & 0.2 & 0.3
\end{nstabular}
\\[5mm]
\begin{nstabular}{\U}{\ns{C}}{\M{C}}{\I{C}}{\NM{C}}
$a$ & 1 & 0.5 & 0
\\ \hline
$b$ & 0 & 0.2 & 0.8
\\ \hline
$c$ & 0.7 & 0.6 & 0.5
\end{nstabular}
\hspace{12mm} &
\begin{nstabular}{\U}{\NSabsoluteset}{\M{\U}}{\I{\U}}{\NM{\U}}
$a$ & 1 & 1 & 0
\\ \hline
$b$ & 1 & 1 & 0
\\ \hline
$c$ & 1 & 1 & 0
\end{nstabular}
\end{tabular}
\end{center}
After observing that $\ns{A}\NSsubseteq\ns{B}$ and $\ns{A}\NSsubseteq\ns{C}$,
one can easily check that $\completion{\ns{A}}$ is a SVN-ultrafilter on $\SSVNS$.
However, if we consider the \nameSVNS $\nNS{Z}$ defined by the following tabular representation:
\vspace{-3mm}
\begin{center}
\begin{nstabular}{\U}{\ns{Z}}{\M{Z}}{\I{Z}}{\NM{Z}}
$a$ & 0 & 0 & 1
\\ \hline
$b$ & 0.7 & 0.3 & 0.5
\\ \hline
$c$ & 0.8 & 0.4 & 0.6
\end{nstabular}
\end{center}
it is a trivial matter to verify that neither $\ns{Z}$ nor its complement $\ns{Z}^\NScompl$\; belong to $\completion{\ns{A}}$.
\end{example}

\section{Conclusions and Perspectives}
In this paper we have introduced the notions of SVN-filter base, SVN-filter and SVN-ultrafilter
on the set $\SSVNS$ of all the single valued neutrosophic sets and we have investigated
some of their fundamental properties and relationships
concerning, in particular, the SVN-filter and SVN-filter base generated by some
neutrosophic filter subbase, the neutrosophic filter completion,
the principal SVN-filters, the infimum and the supremum of two SVN-filter bases,
the image of a SVN-filter base by a neutrosophic induced mapping between two universe sets,
the existence of SVN-ultrafilters and some their characterizations.
\\
We expect to continue the research on these topics by investigating supplementary
features and properties related to SVN-filters
and we hope that this comprehensive study will stimulate further developments
in the theory of Neutrosophic Sets
and will provide useful tools to demonstrate in an elegant and concise way
new properties for the class of Single Valued Neutrosophic Topological Spaces.
\\[6mm]
\noindent
\textbf{Acknowledgements}
The authors would like to express their sincere gratitude to the Anonymous Referees
for their valuable suggestions and comments which were precious in improving of the paper.


%

\end{document}